\numberwithin{equation}{section}
\tikzset{curve/.style={settings={#1},to path={(\tikztostart)
    .. controls ($(\tikztostart)!\pv{pos}!(\tikztotarget)!\pv{height}!270:(\tikztotarget)$)
    and ($(\tikztostart)!1-\pv{pos}!(\tikztotarget)!\pv{height}!270:(\tikztotarget)$)
    .. (\tikztotarget)\tikztonodes}},
    settings/.code={\tikzset{quiver/.cd,#1}
        \def\pv##1{\pgfkeysvalueof{/tikz/quiver/##1}}},
    quiver/.cd,pos/.initial=0.35,height/.initial=0}
\tikzset{tail reversed/.code={\pgfsetarrowsstart{tikzcd to}}}
\tikzset{2tail/.code={\pgfsetarrowsstart{Implies[reversed]}}}
\tikzset{2tail reversed/.code={\pgfsetarrowsstart{Implies}}}
\tikzset{no body/.style={/tikz/dash pattern=on 0 off 1mm}}
\newtheorem{theorem}{Theorem}[section]
\newtheorem{corollary}[theorem]{Corollary}
\newtheorem{lemma}[theorem]{Lemma}
\newtheorem{proposition}[theorem]{Proposition}
\theoremstyle{definition}
\newtheorem{definition}[theorem]{Definition}
\newtheorem{example}[theorem]{Example}
\theoremstyle{remark}
\newtheorem{remark}[theorem]{Remark}
\newcommand*{\im}{\operatorname{im}}
\newcommand*{\dom}{\operatorname{dom}}
\title[The category of partial actions]{The category of partial group actions: quotients, (co)limits and groupoids}
\keywords{Partial group action, quotient of partial actions, groupoids, (co)limits}
\subjclass[2020]{Primary 16W22, 20C99 Secondary 18A30, 18A32, 43A65, 18B40}
\author{Emmanuel Jerez}
\address[Emmanuel Jerez]{Departamento de Matem{\'a}tica, Universidade de S\~ao Paulo,
Rua do Mat\~ao, 1010, 05508-090 S\~ao Paulo, Brazil}
\email{ejerez@ime.usp.br}
\begin{document}

 \maketitle


\begin{abstract}
    We consider the category of partial actions, where the group and the set upon which the group acts can vary. Within this framework, we develop a theory of quotient partial actions and prove that this category is both (co)complete and encompasses the category of groupoids as a full subcategory. 
    In particular, we establish the existence of a pair of adjoint functors, denoted as $\Phi : \textbf{Grpd} \to \textbf{PA}$ and $\Psi : \textbf{PA} \to \textbf{Grpd}$, with the property that $\Psi \Phi \cong 1_{\textbf{Grpd}}$. 
    Next, for a given groupoid $\Gamma$, we provide a characterization of all partial actions that allow the recovery of the groupoid $\Gamma$ through $\Psi$. This characterization is expressed in terms of certain normal subgroups of a universal group constructed from $\Gamma.$
\end{abstract}

\section*{Introduction}

Let $G$ be a group and $X$ be a set. A partial group action is a family of bijections $\theta_g: X_{g^{-1}} \to X_g$ between subsets of $X$, such that $X_1 = X$ and $\theta_g \circ \theta_h$ is a restriction of $\theta_{gh}$ for all $g, h \in G$. Note that in this case, $\circ$ is not the usual composition but the \textit{partial} composition (see \cite[Definition 2.1]{E6}). Set-theoretical partial group actions were introduced by R. Exel in \cite{exe} as a natural generalization of group actions, strongly connected with the theory of inverse semigroups. Partial actions of groups is a rich area of research. Therefore, studying their properties as a category is a natural step towards understanding this kind of structure and its relations with other categories, such as the category of groupoids. These categories have shown strong connections with each other, as evidenced by the various applications of groupoids in the theory of partial group actions, as we can see in works such as \cite{FAPAAG}, \cite{BVDG}, \cite{BVGD2}, \cite{DE2}, \cite{EGC} and \cite{DEP}. 

Groupoids and partial actions of groups share many similar definitions and constructions. For example, the concept of a groupoid action is similar to the idea of a partial action, and representations of groupoids and partial group representations exhibit similarities. Furthermore, both structures have the potential to be tools for studying local phenomena that global actions of groups cannot fully capture. It is well-known that any partial group action determines a natural groupoid called the partial action groupoid, and that any connected groupoid can be obtained from a (not necessarily unique) global group action. All these facts suggest the existence of strong connections between both concepts. This work focuses on the study of the category of partial group actions with the aim of understanding such a relationship.

We begin by defining  the category of partial group actions \textbf{PA} such that both the group and the set may vary, this differs from the usual definition where one assumes that the group is fixed (see \cite{E6}). 
Next, we introduce the concepts of partial group action congruence and quotient partial group action. While previous studies have examined quotient partial group actions (see \cite{AVPACAT}, \cite{DiBaHePi},\cite{KuoSG}), these papers often focused on specific relations or imposed significant restrictions. In contrast, our paper offers a general approach to the quotient of partial group actions, resulting in a proper construction of quotients within the category of partial group actions. Specifically, for any congruence on a partial group action $\theta$, we define a corresponding partial group action $\overline{\theta}$ along with a natural surjective morphism $\pi: \theta \to \overline{\theta}$ that satisfies the usual properties of quotient spaces. Conversely, any morphism of partial group actions determines a unique congruence of partial group actions. Subsequently, we prove that the category of partial group actions possesses small (co)limits and (co)equalizers. It is worth noting that this problem has been previously explored in \cite{AVPACAT}, albeit in the context of partial group actions with a fixed group and strong morphisms of partial group actions.

Subsequently, we explore the relationship between the category of partial group actions \textbf{PA} and the category of groupoids \textbf{Grpd}. Thanks to the existence of coproducts in the category of partial group actions we observe that any groupoid is a partial action groupoid, furthermore we observe that global actions of groupoids correspond to partial group actions. Motivated by these observations, we establish the existence of a pair of functors $\Phi : \textbf{Grpd} \to \textbf{PA}$ and $\Psi : \textbf{PA} \to \textbf{Grpd}$, with the property that $\Psi \Phi \cong 1_{\textbf{Grpd}}$. 

Finally, we focus on characterizing partial group actions that share the same associated partial action groupoid $\Gamma$. Specifically, we prove that these partial actions are exclusively determined by the normal subgroups of a group constructed from the groupoid $\Gamma$. For more detailed information on partial actions, please refer to the book \cite{E6} and the surveys \cite{EBSurvery} and \cite{D3}.


 \section{Partial group actions}

\subsection{The category of partial group actions}
We follow the book \cite{exe} for the basic theory of partial group actions.

 \begin{definition}
     A partial action $\theta = \big( G, X, \{ X_{g}\}_{g \in G}, \{ \theta_{g} \}_{g \in G} \big)$ of a group $G$ on a set $X$ consists of a family indexed by $G$ of subsets $X_{g} \subseteq X$ and a family of bijections $\theta_{g}: X_{g^{-1}} \to X_{g}$ for each $g \in G$, satisfying the following conditions:
     \begin{enumerate}[(i)]
        \item $X_{1} = X$ and $\theta_{1} = 1_{X}$,
        \item $\theta_{g}(X_{g^{-1}}\cap X_{g^{-1}h}) \subseteq X_{g} \cap X_{h}$,
        \item $\theta_{g}\theta_{h}(x) = \theta_{gh}(x)$, for each $x \in X_{h^{-1}} \cap X_{h^{-1}g^{-1}}$.
     \end{enumerate}
 \end{definition}

 It is worth to mention that the conditions $(ii)$, $(iii)$ are equivalent to say that $\theta_{g} \theta_{h}$ is a restriction of $\theta_{gh}$, where the composition $\theta_{g}\theta_{h}$ is the \textit{partial composition}, i.e., the domain of $\theta_{g}\theta_{h}$ is $\{ x \in X_{h^{-1}} : \theta_{h}(x) \in X_{g^{-1}}\}$.

 \begin{definition} \label{d: morphism of partial actions}
   Let $\alpha = (G, X, \{ X_g \}, \{ \alpha_g \}) $ and $\beta = (S, Y, \{ Y_h \},\{ \beta _{h}\}) $ be partial actions. A morphism of partial actions $\phi$ is a pair $(\phi_0, \phi_1)$ where $\phi_0: X \to Y$ is a map of sets and $\phi_1: G \to S$ is a morphism of groups such that
   \begin{enumerate}[(i)]
   	\item $ \phi_0(X_g) \subseteq  Y_{\phi_1(g)}$,
	\item $ \phi_0(\alpha_g(x)) = \beta_{\phi_1(g)}(\phi_0(x)) $, for all $x \in X_{g^{-1}}$.
   \end{enumerate}
 \end{definition}

 Note that Definition~\ref{d: morphism of partial actions} is a generalization of the concept of a $G$-equivariant map between partial group actions (see \cite[Definition 2.7]{exe}). Indeed, if in Definition~\ref{d: morphism of partial actions} we set $G=S$ and $\phi_1 = 1_G$, then we obtain a $G$-equivariant map.

 \begin{remark} \label{r: equality of partial actions}
    It is clear that if $\phi: \alpha \to \beta$ and $\varphi: \alpha \to \beta$ morphisms of partial actions, then $\phi = \varphi$ if, and only if, $\phi_0 = \varphi_0$ and $\phi_1 = \varphi_1.$
 \end{remark} 

 The category \textbf{PA} is the category whose objects are partial group actions and whose morphisms are morphisms of partial group actions.

 \subsection{Quotient partial actions}

 Give a proper definition about what means a congruence on a partial action is a task undone until now. The conventional definition of congruence for a global group action, which states that if $x$ and $y$ are related ($x \sim y$), then their images under any group element $g$ must also be related ($g \cdot x \sim g \cdot y$), does not seamlessly apply to the case of partial actions. This situation often imposes stricter conditions on the relation, such as requiring that if an element $x$ belongs to the domain of the partial action of a group element $g$ (i.e., $x \in X_g$), then the equivalence class of $x$ must be entirely contained within $X_g.$ However, these stricter conditions may lead to a loss of valuable information about partial actions and their morphisms. In this section, we will define a set of relations that every morphism of partial actions must satisfy. These relations are sufficient to provide a well-defined construction of a quotient partial action in a general sense. In this section $\alpha$ will denote a partial action $\alpha = (G, X, \{ X_g \}, \{ \alpha_g \})$ of $G$ on $X$.
 
 \begin{definition}
    Let $\alpha = (G, X, \{ X_g \}, \{ \alpha_g \})$ be a partial action, and let $\mathcal{R}$ be an equivalence relation on $X$. We say that an $n$-tuple
    \[
        (x_1 \delta_{g_1}, x_2 \delta_{g_2}, \ldots, x_n \delta_{g_n})
    \]
     is an $\mathcal{R}$-chain of length $n$ (the $\delta_g's$ acts as place-holders), if it satisfies the following conditions:
     \begin{enumerate}[(i)]
        \item $x_i \in X_{g_i}$ for all $i \in \{ 1, \ldots, n \}$; 
        \item if $n \geq 2$, $\alpha_{g_{i-1}^{-1}}(x_{i-1}) \sim_{\mathcal{R}} x_i$, for all $1 < i \leq n$.
     \end{enumerate}
 \end{definition}
 We can visualize an $\mathcal{R}$-chain $(x_1 \delta_{g_1}, \ldots, x_n \delta_{g_n})$ of length $n$ as the following diagram
 \[
     \begin{tikzcd}[row sep=tiny]
     {x_1} & {\alpha_{g_1^{-1}}(x_1)} \\
     & {x_2} & {\alpha_{g_2^{-1}}(x_2)} \\
     && {x_3} & {\alpha_{g_3^{-1}}(x_3)} \\
     &&& {x_n} & {\alpha_{g_n^{-1}}(x_n)},
     \arrow["\sim"{marking}, draw=none, from=2-2, to=1-2]
     \arrow["{g_3}"', maps to, from=3-4, to=3-3]
     \arrow["\sim"{marking}, draw=none, from=3-3, to=2-3]
     \arrow["{g_2}"', maps to, from=2-3, to=2-2]
     \arrow["{g_n}"', maps to, from=4-5, to=4-4]
     \arrow["\cdots"{marking}, draw=none, from=4-4, to=3-4]
     \arrow["{g_1}"', maps to, from=1-2, to=1-1]
 \end{tikzcd}
 \]
where the vertical symbol $\sim$ means $\mathcal{R}$-equivalent. 

\begin{definition}
    We define $\mathfrak{C}_{\alpha, \mathcal{R}}$ (or just $\mathfrak{C}$ if there is no ambiguity) as the set of all $\mathcal{R}$-chains of a partial action $\alpha$.
\end{definition}

    Let $\mathfrak{c}=(x_1 \delta_{g_1}, \ldots, x_n \delta_{g_n})$ and $\mathfrak{o}=(y_1 \delta_{h_1}, \ldots, y_m \delta_{h_m})$ be two $\mathcal{R}$-chains. Then, if $\alpha_{g_n^{-1}}(x_n) \sim y_1$ we can concatenate both $\mathcal{R}$-chains into a new $\mathcal{R}$-chain
    \[
        \mathfrak{c} \cdot \mathfrak{o} := (x_1 \delta_{g_1}, \ldots, x_n \delta_{g_n}, y_1 \delta_{h_1}, \ldots, y_m \delta_{h_m}). 
    \]
    Now, if $\mathfrak{c}=(x_1 \delta_{g_1}, \ldots, x_n \delta_{g_n})$ is an $\mathcal{R}$-chain we define  	
    \begin{equation*}
        \operatorname{t}(\mathfrak{c}):= x_1 \text{ and } \operatorname{s}(\mathfrak{c}) := \alpha_{g_n^{-1}}(x_n).
    \end{equation*}
    Thus, for any pair of $\mathcal{R}$-chains $\mathfrak{c}$ and $\mathfrak{o}$, the concatenation $\mathcal{R}$-chain $\mathfrak{c} \cdot \mathfrak{o}$ there exists if, and only if, $\operatorname{s}(\mathfrak{c}) \sim \operatorname{t}(\mathfrak{o})$. Moreover, any $\mathcal{R}$-chain have an ``\textit{inverse}'', as we see in the following lemma:

\begin{lemma}
    Let $\mathfrak{c}=(x_1 \delta_{g_1}, \ldots, x_n \delta_{g_n})$ be $n$-tuple with $x_i \in X_{g_i}$, we define
    \begin{equation*}
        \mathfrak{c}^{-1} := (\alpha_{g_n^{-1}}(x_n) \delta_{g_n^{-1}}, \alpha_{g_{n-1}^{-1}}(x_{n-1}) \delta_{g_{n-1}^{-1}}, \ldots, \alpha_{g_1^{-1}}(x_1) \delta_{g_1^{-1}}).
    \end{equation*}
    Then,
    \begin{enumerate}[(i)]
        \item $(\mathfrak{c}^{-1})^{-1} = \mathfrak{c}$
        \item $\mathfrak{c}$ is an $\mathcal{R}$-chain if, and only if, $\mathfrak{c}^{-1}$ is an $\mathcal{R}$-chain.
        \item $\operatorname{s}(\mathfrak{c}) = \operatorname{t}(\mathfrak{c}^{-1})$ and $\operatorname{t}(\mathfrak{c}) = \operatorname{s}(\mathfrak{c}^{-1})$.
    \end{enumerate}
    \label{l: R chain inverse}
\end{lemma}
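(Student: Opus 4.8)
The plan is to reduce the entire statement to the single fundamental identity
$\alpha_{g}\big(\alpha_{g^{-1}}(x)\big) = x$ for all $x \in X_{g}$, which I would extract from the axioms by setting $h = g^{-1}$ in condition (iii): this gives $\alpha_{g}\alpha_{g^{-1}}(x) = \alpha_{1}(x) = x$ on the domain $X_{(g^{-1})^{-1}} \cap X_{(g^{-1})^{-1}g^{-1}} = X_{g} \cap X_{1} = X_{g}$. Throughout I will write $\mathfrak{c}^{-1} = (y_{1}\delta_{h_{1}}, \ldots, y_{n}\delta_{h_{n}})$ with $h_{k} = g_{n+1-k}^{-1}$ and $y_{k} = \alpha_{g_{n+1-k}^{-1}}(x_{n+1-k})$. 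Before anything else I would record that $\mathfrak{c}^{-1}$ is again an admissible tuple: since $x_{n+1-k} \in X_{g_{n+1-k}}$ and $\alpha_{g_{n+1-k}^{-1}}$ maps $X_{g_{n+1-k}}$ bijectively onto $X_{g_{n+1-k}^{-1}} = X_{h_{k}}$, we get $y_{k} \in X_{h_{k}}$. This membership is exactly condition (i) of the chain definition for $\mathfrak{c}^{-1}$, and it is what allows the inverse operation to be applied to $\mathfrak{c}^{-1}$ in the first place.

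For (i), I would unwind the definition twice. The $j$-th entry of $(\mathfrak{c}^{-1})^{-1}$ carries the place-holder $h_{n+1-j}^{-1} = g_{j}$ and the element $\alpha_{h_{n+1-j}^{-1}}(y_{n+1-j}) = \alpha_{g_{j}}\big(\alpha_{g_{j}^{-1}}(x_{j})\big)$, which equals $x_{j}$ by the fundamental identity because $x_{j} \in X_{g_{j}}$. Hence the $j$-th entries of $(\mathfrak{c}^{-1})^{-1}$ and $\mathfrak{c}$ agree for every $j$, giving $(\mathfrak{c}^{-1})^{-1} = \mathfrak{c}$.

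For (ii), by (i) it suffices to prove a single implication and then apply it to $\mathfrak{c}^{-1}$: if $\mathfrak{c}^{-1}$ is an $\mathcal{R}$-chain then $(\mathfrak{c}^{-1})^{-1} = \mathfrak{c}$ is one as well. So I would assume $\mathfrak{c}$ is an $\mathcal{R}$-chain and verify condition (ii) for $\mathfrak{c}^{-1}$ (condition (i) is the membership already noted). Computing $\alpha_{h_{k-1}^{-1}}(y_{k-1}) = \alpha_{g_{n+2-k}}\big(\alpha_{g_{n+2-k}^{-1}}(x_{n+2-k})\big) = x_{n+2-k}$ by the fundamental identity, while $y_{k} = \alpha_{g_{n+1-k}^{-1}}(x_{n+1-k})$, the required relation $\alpha_{h_{k-1}^{-1}}(y_{k-1}) \sim_{\mathcal{R}} y_{k}$ becomes, after the substitution $i = n+2-k$ (so that $i$ ranges over $\{2,\ldots,n\}$), the relation $x_{i} \sim_{\mathcal{R}} \alpha_{g_{i-1}^{-1}}(x_{i-1})$. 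This is precisely the defining relation of $\mathfrak{c}$ read in reverse, and it holds because $\mathcal{R}$ is symmetric.

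Finally, (iii) is a direct evaluation: $\operatorname{t}(\mathfrak{c}^{-1}) = y_{1} = \alpha_{g_{n}^{-1}}(x_{n}) = \operatorname{s}(\mathfrak{c})$, and $\operatorname{s}(\mathfrak{c}^{-1}) = \alpha_{h_{n}^{-1}}(y_{n}) = \alpha_{g_{1}}\big(\alpha_{g_{1}^{-1}}(x_{1})\big) = x_{1} = \operatorname{t}(\mathfrak{c})$, again using the fundamental identity on $x_{1} \in X_{g_{1}}$. I expect no conceptual obstacle here; the only thing requiring care is the bookkeeping, namely keeping the reversed index correspondence $k \leftrightarrow n+1-k$ straight and checking at each step that the element to which a partial bijection is applied genuinely lies in its domain. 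Once $\alpha_{g}\alpha_{g^{-1}} = 1$ on $X_{g}$ and the symmetry of $\mathcal{R}$ are in hand, every claim follows by substitution.
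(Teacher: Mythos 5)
Your proposal is correct and matches the paper's approach: the paper simply states that all three items follow by "direct and simple computations," and your write-up supplies exactly those computations, with the key identity $\alpha_{g}\alpha_{g^{-1}} = 1_{X_{g}}$ correctly extracted from the axioms and the index bookkeeping handled properly.
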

\begin{proof}
    All assertions are obtained through direct and simple computations.
\end{proof}

Set $\epsilon : \mathfrak{C} \to G$ such that if $\mathfrak{c}= (x_1 \delta_{g_1}, \ldots, x_n \delta_{g_n})$, then
    \begin{equation*}
        \epsilon(\mathfrak{c}) = g_1 g_2 \ldots g_n.
    \end{equation*}
It is clear that for all $\mathfrak{c}, \mathfrak{o} \in \mathfrak{C}$ we have that
\begin{equation*}
    \epsilon(\mathfrak{c} \cdot \mathfrak{o}) = \epsilon(\mathfrak{c}) \epsilon(\mathfrak{o}),
\end{equation*}
and
\begin{equation}
    \epsilon(\mathfrak{c}^{-1}) = \epsilon(\mathfrak{c})^{-1}.   
    \label{eq: epsilon chain inverse}
\end{equation}

\textbf{Notation:} If $\mathcal{R}$ is an equivalence relation on a set $X$, we will use the notation $\overline{x}$ to represent the equivalence class of an element $x \in X$ (when there is no ambiguity). Similarly, if $K$ is a normal subgroup of a group $G$, we denote the equivalence class of an element $g \in G$ by $\overline{g}$.

Now we can define what we understand as a congruence on a partial action $\alpha$.

\begin{definition} \label{d: congruence on a partial action}
    Let $\alpha = (G, X, \{ X_g \}, \{ \alpha_g \})$ be a partial action of $G$ on $X$. A \textbf{partial action congruence} on $\alpha$ is a pair $(\mathcal{R}, K)$ where $\mathcal{R}$ is an equivalence relation on $X$ and $K$ is a normal subgroup of $G$, such that it satisfies the following axiom:
    
    \textbf{(PC)} If $\mathfrak{c}, \mathfrak{o} \in \mathfrak{C}_{\alpha, \mathcal{R}}$ are such that $\overline{\epsilon(\mathfrak{c})} = \overline{\epsilon(\mathfrak{o})}$ then:  $\operatorname{s}(\mathfrak{c}) \sim \operatorname{s}(\mathfrak{o}) \Leftrightarrow \operatorname{t}(\mathfrak{c}) \sim \operatorname{t}(\mathfrak{o})$.
\end{definition}

\begin{lemma} \label{l: intersection of congruences is congruence}
    Let $\alpha = (G, X, \{ X_g \}, \{ \alpha_g \})$ be a partial action, and let $\{ (\mathcal{R}_{i}, K_{i}) \}_{i \in I}$ be a family of partial action congruences on $\alpha$. Set $\mathcal{R} = \cap_{i \in I} \mathcal{R}_{i}$ and $K = \cap_{i \in I} K_i$. Then, $(\mathcal{R}, K)$ is a partial action congruence on $\alpha$.
\end{lemma}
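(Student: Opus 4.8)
The plan is to reduce axiom (PC) for the intersection $(\mathcal{R}, K)$ to axiom (PC) for each individual congruence $(\mathcal{R}_i, K_i)$. First I would dispatch the routine structural points: $\mathcal{R} = \cap_{i} \mathcal{R}_i$ is again an equivalence relation, since reflexivity, symmetry and transitivity are each preserved under arbitrary intersection, and $K = \cap_{i} K_i$ is again a normal subgroup of $G$, as an intersection of normal subgroups is normal. Thus $(\mathcal{R}, K)$ is at least a candidate congruence, and it only remains to verify (PC).

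The crucial observation, which I expect to carry the real content, is that the very notion of an admissible chain depends on the chosen relation. Since $\mathcal{R} \subseteq \mathcal{R}_i$ for every $i$, the defining condition $\alpha_{g_{j-1}^{-1}}(x_{j-1}) \sim_{\mathcal{R}} x_j$ of an $\mathcal{R}$-chain implies the corresponding condition with $\sim_{\mathcal{R}_i}$; hence every $\mathcal{R}$-chain is simultaneously an $\mathcal{R}_i$-chain for all $i$, that is, $\mathfrak{C}(\alpha, \mathcal{R}) \subseteq \mathfrak{C}(\alpha, \mathcal{R}_i)$. I would also record that $\operatorname{s}$, $\operatorname{t}$ and $\epsilon$ are defined purely in terms of the underlying tuple (the $x_j$ and $g_j$), so that $\operatorname{s}(\mathfrak{c})$, $\operatorname{t}(\mathfrak{c})$ and $\epsilon(\mathfrak{c})$ are unchanged whether we view $\mathfrak{c}$ as an $\mathcal{R}$-chain or as an $\mathcal{R}_i$-chain.

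With this in place, verifying (PC) is direct. Take $\mathfrak{c}, \mathfrak{o} \in \mathfrak{C}(\alpha, \mathcal{R})$ with $\overline{\epsilon(\mathfrak{c})} = \overline{\epsilon(\mathfrak{o})}$ in $G/K$. Since $K \subseteq K_i$, this equality descends to $\overline{\epsilon(\mathfrak{c})} = \overline{\epsilon(\mathfrak{o})}$ in each quotient $G/K_i$, and by the previous paragraph $\mathfrak{c}, \mathfrak{o}$ are $\mathcal{R}_i$-chains. Assuming $\operatorname{s}(\mathfrak{c}) \sim_{\mathcal{R}} \operatorname{s}(\mathfrak{o})$, the definition of the intersection gives $\operatorname{s}(\mathfrak{c}) \sim_{\mathcal{R}_i} \operatorname{s}(\mathfrak{o})$ for every $i$; applying (PC) for $(\mathcal{R}_i, K_i)$ yields $\operatorname{t}(\mathfrak{c}) \sim_{\mathcal{R}_i} \operatorname{t}(\mathfrak{o})$ for every $i$, and intersecting again gives $\operatorname{t}(\mathfrak{c}) \sim_{\mathcal{R}} \operatorname{t}(\mathfrak{o})$. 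The reverse implication is identical with the roles of $\operatorname{s}$ and $\operatorname{t}$ interchanged, so the biconditional holds and $(\mathcal{R}, K)$ satisfies (PC).

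The only genuinely delicate point is the one flagged above: one must not forget that shrinking the relation shrinks the family of chains, so that the hypotheses of (PC) for each $(\mathcal{R}_i, K_i)$ really are met by the $\mathcal{R}$-chains we feed in. Everything else is bookkeeping about intersections, and I do not expect any obstruction.
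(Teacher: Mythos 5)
Your proof is correct and follows essentially the same route as the paper: the key observation that every $\mathcal{R}$-chain is an $\mathcal{R}_i$-chain (so the hypotheses of (PC) for each $(\mathcal{R}_i,K_i)$ are met), followed by passing the equivalences through each index and intersecting. The extra bookkeeping you include (that intersections of equivalence relations and of normal subgroups are again such, and that $\operatorname{s}$, $\operatorname{t}$, $\epsilon$ do not depend on the relation) is left implicit in the paper but is accurate.
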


\begin{proof}
    Observe that any $\mathcal{R}$-chain is an $\mathcal{R}_{i}$-chain for all $i \in I$. Thus, if $\mathfrak{c}$ and $\mathfrak{o}$ are $\mathcal{R}$-chains, such that $\epsilon(\mathfrak{c}) \sim_{K} \epsilon(\mathfrak{o})$, then  $\epsilon(\mathfrak{c}) \sim_{K_i} \epsilon(\mathfrak{o})$ for all $i \in I$ and
    \[
       \operatorname{s}(\mathfrak{c}) \sim_{\mathcal{R}} \operatorname{s}(\mathfrak{o}) 
       \Leftrightarrow 
       \operatorname{s}(\mathfrak{c}) \sim_{\mathcal{R}_i} \operatorname{s}(\mathfrak{o}) \, \forall i 
       \Leftrightarrow 
       \operatorname{t}(\mathfrak{c}) \sim_{\mathcal{R}_i} \operatorname{t}(\mathfrak{o}) \, \forall i
       \Leftrightarrow
       \operatorname{t}(\mathfrak{c}) \sim_{\mathcal{R}} \operatorname{t}(\mathfrak{o}). 
    \]
\end{proof}

\begin{remark}
    If $(\mathcal{R}, K)$ and $(\mathcal{L}, N)$ are two partial action congruences on $\alpha$, we write $(\mathcal{R}, K) \cap (\mathcal{L}, N)$ for $(\mathcal{R} \cap \mathcal{L}, K \cap N)$, and $(\mathcal{R}, K) \subseteq (\mathcal{L}, N)$ if $\mathcal{R} \subseteq \mathcal{L}$ and $K \subseteq N$. Thus, the set of all partial action congruences on $\alpha$ is a partially ordered set, and Lemma~\ref{l: intersection of congruences is congruence} implies that it is a meet-semilattice.
\end{remark}

A desirable property of congruences is that a morphism of partial actions determines a unique congruence that carries information about the morphism. In fact, any morphism of partial actions determines a congruence in the sense of Definition~\ref{d: congruence on a partial action}:
 
\begin{example} \label{e: morphism of partial actions determines congruences}
    Let $\alpha = (G, X, \{ X_g \}, \{ \alpha_g \})$ and $\beta = (S, Y, \{ Y_s \}, \{ \beta_s \})$ be partial actions, and let $\phi: \alpha \to \beta$ be a morphism of partial actions. Let $\mathcal{R}$ be the equivalence relation given by 
    \[
        x \sim y \Leftrightarrow \phi_0(x) = \phi_0(y),
    \]
    and $K := \ker \phi_1$, thus $(\mathcal{R}, K)$ is a partial action congruence on $\alpha$. Indeed, let $\mathfrak{c}=(x_1 \delta_{g_1}, \ldots, x_n \delta_{g_n})$ and $\mathfrak{o}=(y_1 \delta_{h_1}, \ldots, y_m \delta_{h_m})$ be $\mathcal{R}$-chains, such that $\phi_1(\epsilon(\mathfrak{c})) = \phi_1(\epsilon(\mathfrak{o}))$. Since $\mathfrak{c}$ and $\mathfrak{o}$ are $\mathcal{R}$-chains and $\phi$ is a morphism of partial actions we obtain:
    \[
        \phi_{0}(x_i) 
        = \phi_{0}(\alpha_{g_{i-1}^{-1}}(x_{i-1})) 
        = \beta_{\phi_1(g_{i-1}^{-1})}(\phi_0(x_{i-1})) \in \dom \beta_{\phi_1(g_{i-1})}, 
        \text{ for all } 1 < i \leq n 
    \]
    and
    \[
        \phi_{0}(y_i) 
        = \phi_{0}(\alpha_{h_{i-1}^{-1}}(y_{i-1})) 
        = \beta_{\phi_1(h_{i-1}^{-1})}(\phi_0(y_{i-1})) \in \dom \beta_{\phi_1(h_{i-1})}, 
        \text{ for all } 1 < i \leq m. 
    \]
    Whence,
    \[
        \phi_0(\operatorname{s}(\mathfrak{c}))
        \in \dom \beta_{\phi_1(g_1)}\beta_{\phi_1(g_2)} \ldots \beta_{\phi_1(g_n)} \subseteq \dom \beta_{\phi_1(\epsilon(\mathfrak{c}))}  
        \text{ and }
        \beta_{\phi_1(\epsilon(\mathfrak{c}))}(\phi_0(\operatorname{s}(\mathfrak{c}))) = \phi_0(\operatorname{t}(\mathfrak{c}))
    \]
    and
    \[
        \phi_0(\operatorname{s}(\mathfrak{o}))
        \in \dom \beta_{\phi_1(h_1)}\beta_{\phi_1(h_2)} \ldots \beta_{\phi_1(h_m)} \subseteq \dom \beta_{\phi_1(\epsilon(\mathfrak{o}))}  
        \text{ and }
        \beta_{\phi_1(\epsilon(\mathfrak{o}))}(\phi_0(\operatorname{s}(\mathfrak{o}))) = \phi_0(\operatorname{t}(\mathfrak{o})).
    \]
    Since $\phi_1(\epsilon(\mathfrak{c}))= \phi_1(\epsilon(\mathfrak{o}))$ and $\beta_{\phi_1(\epsilon(\mathfrak{c}))}$ is a bijection, we conclude that
    \[
        \phi_0(\operatorname{s}(\mathfrak{o})) = \phi_0(\operatorname{s}(\mathfrak{c}))
        \Leftrightarrow
        \phi_0(\operatorname{t}(\mathfrak{o})) = \phi_0(\operatorname{t}(\mathfrak{c})).
    \]
    Thus, $(\mathcal{R}, K)$ is a partial action congruence on $\alpha$.
\end{example}

We want to verify that any partial action congruence on $\alpha$ determines a \textit{quotient} partial action. Let $(\mathcal{R}, K)$ be a partial action congruence on $\alpha$. We set $\overline{G}:= G/K$ and $\overline{X}:= X / \sim_{\mathcal{R}}$. We will construct a partial action $\theta$ of $\overline{G}$ on $\overline{X}$ such that the canonical maps $\pi_0: X \to \overline{X}$ and $\pi_1: G \to \overline{G}$ determines a morphism of partial actions $\pi: \alpha \to \theta$. First, we define
\begin{equation} \label{eq: quotient partial action domain}
    D_{\overline{g}}:= \{ \overline{\operatorname{t}(\mathfrak{c})} \in \overline{X}: \mathfrak{c} \in \mathfrak{C} \text{ and } \overline{\epsilon(\mathfrak{c}}) = \overline{g} \}.
\end{equation}
Observe that by (iii) of Lemma~\ref{l: R chain inverse} and equation \eqref{eq: epsilon chain inverse} we have that
\begin{equation} \label{eq: quotient partial action map}
    D_{\overline{g}}:= \{ \overline{\operatorname{s}(\mathfrak{c})} \in \overline{X}: \mathfrak{c} \in \mathfrak{C} \text{ and } \overline{\epsilon(\mathfrak{c}}) = \overline{g^{-1}} \}.
\end{equation}
Moreover, it follows that
\begin{equation*}
    \overline{\operatorname{t}(\mathfrak{c})} \in D_{\overline{g}} \Leftrightarrow \overline{\operatorname{s}(\mathfrak{c})} \in D_{\overline{g}^{-1}},
\end{equation*}
where $\mathfrak{c} \in \mathfrak{C}$ and $\overline{\epsilon(\mathfrak{c})} = \overline{g}$.

Second, we define $\theta_{\overline{g}}: D_{\overline{g}^{-1}} \to D_{\overline{g}}$ by
\begin{equation}
    \theta_{\overline{g}}(\overline{\operatorname{s}(\mathfrak{c})}) = \overline{\operatorname{t}(\mathfrak{c})}.
\end{equation}
The map $\theta_{\overline{g}}$ is well-defined thanks to \textbf{(PC)}, $\overline{\operatorname{t}(\mathfrak{c})} \in D_{\overline{g}}$ since $\overline{\epsilon(\mathfrak{c})} = \overline{g}$. Furthermore, $\theta_{\overline{g}}$ is a bijection since 
\[
    \theta_{\overline{g}^{-1}}\big(\overline{\operatorname{t}(\mathfrak{c})}\big) 
    = \theta_{\overline{g}^{-1}}\big(\overline{\operatorname{s}(\mathfrak{c}^{-1})}\big) 
    = \operatorname{t}(\overline{\mathfrak{c}^{-1}}) = \overline{\operatorname{s}(\mathfrak{c})}.
\]

\begin{lemma} \label{l: partial action of chains}
    Let $\mathfrak{c}, \mathfrak{o} \in \mathfrak{C}$ such that $\overline{\operatorname{s}(\mathfrak{c})} = \overline{\operatorname{t}(\mathfrak{o})}$. Then 
    \begin{enumerate}[(i)]
        \item $\operatorname{t}(\mathfrak{c}) = \operatorname{t}(\mathfrak{c} \cdot \mathfrak{o})$ and $\operatorname{s}(\mathfrak{o}) = \operatorname{s}(\mathfrak{c} \cdot \mathfrak{o})$,
        \item $\overline{\operatorname{s}(\mathfrak{o})} \in D_{\overline{\epsilon(\mathfrak{c} \cdot \mathfrak{o})}^{-1}}$, $\overline{\operatorname{t}(\mathfrak{c})} \in D_{\overline{\epsilon(\mathfrak{c} \cdot \mathfrak{o})}}$,
        \item $\theta_{\overline{\epsilon(\mathfrak{o} \cdot \mathfrak{c})}}(\overline{\operatorname{s}(\mathfrak{o})}) = \theta_{\overline{\mathfrak{c}}}\theta_{\overline{\mathfrak{o}}}(\overline{\operatorname{s}(\mathfrak{o})}) = \overline{\operatorname{t}(\mathfrak{c})}$.
    \end{enumerate}
\end{lemma}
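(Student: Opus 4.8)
The plan is to prove the three items in the order stated, relying only on the explicit descriptions of $\operatorname{s}$, $\operatorname{t}$, of the domains $D_{\overline{g}}$, and of the maps $\theta_{\overline{g}}$ already set up. For item (i) I would simply unwind the definition of concatenation. Writing $\mathfrak{c} = (x_1 \delta_{g_1}, \ldots, x_n \delta_{g_n})$ and $\mathfrak{o} = (y_1 \delta_{h_1}, \ldots, y_m \delta_{h_m})$, the hypothesis $\overline{\operatorname{s}(\mathfrak{c})} = \overline{\operatorname{t}(\mathfrak{o})}$ is exactly the relation $\operatorname{s}(\mathfrak{c}) \sim_{\mathcal{R}} \operatorname{t}(\mathfrak{o})$ that makes $\mathfrak{c} \cdot \mathfrak{o}$ exist. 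Since the first entry of $\mathfrak{c} \cdot \mathfrak{o}$ is $x_1 \delta_{g_1}$ and its last entry is $y_m \delta_{h_m}$, reading $\operatorname{t}$ and $\operatorname{s}$ off the defining formula immediately yields $\operatorname{t}(\mathfrak{c} \cdot \mathfrak{o}) = x_1 = \operatorname{t}(\mathfrak{c})$ and $\operatorname{s}(\mathfrak{c} \cdot \mathfrak{o}) = \alpha_{h_m^{-1}}(y_m) = \operatorname{s}(\mathfrak{o})$.

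For item (ii) I would combine (i) with the two descriptions of the domain, \eqref{eq: quotient partial action domain} and \eqref{eq: quotient partial action map}, applied to the single chain $\mathfrak{d} := \mathfrak{c} \cdot \mathfrak{o}$, whose weight is $\epsilon(\mathfrak{d}) = \epsilon(\mathfrak{c})\epsilon(\mathfrak{o})$. The $\operatorname{t}$-description gives $\overline{\operatorname{t}(\mathfrak{d})} \in D_{\overline{\epsilon(\mathfrak{d})}}$, and by (i) this class equals $\overline{\operatorname{t}(\mathfrak{c})}$, which is the second membership. Reading the $\operatorname{s}$-description with exponent $\overline{\epsilon(\mathfrak{d})}^{-1}$ gives $\overline{\operatorname{s}(\mathfrak{d})} \in D_{\overline{\epsilon(\mathfrak{d})}^{-1}}$, and by (i) this class is $\overline{\operatorname{s}(\mathfrak{o})}$, the first membership.

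Item (iii) is the substantive step, and the only place where care is needed. For the left-hand equality I would apply the definition of $\theta_{\overline{g}}$ directly to the chain $\mathfrak{c} \cdot \mathfrak{o}$ with $\overline{g} = \overline{\epsilon(\mathfrak{c} \cdot \mathfrak{o})}$; this is legitimate by (ii), and gives $\theta_{\overline{\epsilon(\mathfrak{c} \cdot \mathfrak{o})}}(\overline{\operatorname{s}(\mathfrak{c} \cdot \mathfrak{o})}) = \overline{\operatorname{t}(\mathfrak{c} \cdot \mathfrak{o})}$, which by (i) reads $\theta_{\overline{\epsilon(\mathfrak{c} \cdot \mathfrak{o})}}(\overline{\operatorname{s}(\mathfrak{o})}) = \overline{\operatorname{t}(\mathfrak{c})}$. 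For the middle expression I would compose two applications of $\theta$: first $\theta_{\overline{\epsilon(\mathfrak{o})}}(\overline{\operatorname{s}(\mathfrak{o})}) = \overline{\operatorname{t}(\mathfrak{o})}$, where the argument lies in $D_{\overline{\epsilon(\mathfrak{o})}^{-1}}$ by the $\operatorname{s}$-description applied to $\mathfrak{o}$ itself; then — and this is the single point where the hypothesis enters — I rewrite $\overline{\operatorname{t}(\mathfrak{o})} = \overline{\operatorname{s}(\mathfrak{c})}$, so that $\theta_{\overline{\epsilon(\mathfrak{c})}}$ applies to $\overline{\operatorname{s}(\mathfrak{c})} \in D_{\overline{\epsilon(\mathfrak{c})}^{-1}}$ and returns $\overline{\operatorname{t}(\mathfrak{c})}$. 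Since $\overline{\epsilon(\mathfrak{c})}\,\overline{\epsilon(\mathfrak{o})} = \overline{\epsilon(\mathfrak{c} \cdot \mathfrak{o})}$, both the composite $\theta_{\overline{\epsilon(\mathfrak{c})}}\theta_{\overline{\epsilon(\mathfrak{o})}}$ and the single map $\theta_{\overline{\epsilon(\mathfrak{c} \cdot \mathfrak{o})}}$ carry $\overline{\operatorname{s}(\mathfrak{o})}$ to $\overline{\operatorname{t}(\mathfrak{c})}$, giving both equalities. The main obstacle is purely bookkeeping: checking at each application of $\theta_{\overline{g}}$ that the argument genuinely lies in $D_{\overline{g}^{-1}}$ — each such check reduces, through (ii) and the two dual descriptions of the domains, to exhibiting one suitable chain — and invoking the well-definedness of $\theta_{\overline{g}}$ guaranteed by \textbf{(PC)}, so that the values do not depend on the representative chains chosen.
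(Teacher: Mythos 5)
Your proposal is correct and follows essentially the same route as the paper's (much terser) proof: item (i) by unwinding the definition of concatenation, item (ii) from (i) together with the two descriptions of $D_{\overline{g}}$, and item (iii) by applying the definition of $\theta_{\overline{g}}$ to the chain $\mathfrak{c}\cdot\mathfrak{o}$ and then computing the composite $\theta_{\overline{\epsilon(\mathfrak{c})}}\theta_{\overline{\epsilon(\mathfrak{o})}}$ via the substitution $\overline{\operatorname{t}(\mathfrak{o})}=\overline{\operatorname{s}(\mathfrak{c})}$. Your extra bookkeeping about domain membership and well-definedness under \textbf{(PC)} only makes explicit what the paper leaves implicit.
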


\begin{proof}
    Item $(i)$ is obvious, item $(ii)$ is a direct consequence of item $(i)$. Finally, by item $(i)$ and the definition of $\theta$ one concludes that $\theta_{\overline{\epsilon(\mathfrak{o} \cdot \mathfrak{c})}}(\overline{\operatorname{s}(\mathfrak{o})}) = \overline{\operatorname{t}(\mathfrak{c})}$, and by direct computation we have that
    \[
      \theta_{\overline{\mathfrak{c}}} \theta_{\overline{\mathfrak{o}}}(\overline{\operatorname{s}(\mathfrak{o})})
      =\theta_{\overline{\mathfrak{c}}}(\overline{\operatorname{t}(\mathfrak{o})})
      =\theta_{\overline{\mathfrak{c}}}(\overline{\operatorname{s}(\mathfrak{c})})
      = \overline{\operatorname{t}(\mathfrak{c})}.
    \]
\end{proof}

Now we want to verify all axioms of partial actions. Notice that $D_{\overline{1}} = \overline{X}$ since $\overline{x} = \overline{\operatorname{t}(x \delta_1)}$, for all $x \in X$. From this, it is clear that $\theta_{\overline{1}}(\overline{x}) = \overline{x}$. Let $\overline{z} \in D_{\overline{g}^{-1}} \cap D_{\overline{h}}$, then there exists $\mathfrak{c}, \mathfrak{o} \in \mathfrak{C}$ such that $\overline{\epsilon(\mathfrak{c})} = \overline{g}$,  $\overline{\epsilon(\mathfrak{o})} = \overline{h}$ and $\overline{\operatorname{s}(\mathfrak{c})} = \overline{\operatorname{t}(\mathfrak{o})}= \overline{z}$. Thus, by Lemma~\ref{l: partial action of chains} we conclude that $\theta_{\overline{g}}(\overline{z}) = \overline{\operatorname{t}(\mathfrak{c})} \in D_{\overline{g}} \cap D_{\overline{gs}}$. Hence, $\theta_{\overline{g}} \big( D_{\overline{g}^{-1}} \cap D_{\overline{s}} \big) \subseteq D_{\overline{g}} \cap D_{\overline{gs}}$, and the last axiom is a consequence of $(iii)$ of Lemma~\ref{l: partial action of chains}. Thus, we obtain the following theorem.

\begin{theorem} \label{t: partial action congruence}
    Let $\alpha = (G, X, \{ X_g \}, \{ \alpha_g \})$ be a partial action of $G$ on $X$, and let $(\mathcal{R}, K)$ be a partial action congruence on $\alpha$. Define $\{ D_{\overline{g}} \}_{\overline{g} \in \overline{G}}$ and $\{ \theta_{\overline{g}} \}_{\overline{g} \in \overline{G}}$ as \eqref{eq: quotient partial action domain} and \eqref{eq: quotient partial action map} respectively. Then, $\theta := (\overline{G}, \overline{X}, \{ D_{\overline{g}} \}, \{ \theta_{\overline{g}} \})$ is a partial action of $\overline{G}$ on $\overline{X}$ such that the canonical maps $\pi_0: X \to \overline{X}$ and $\pi_1: G \to \overline{G}$ determines a morphism of partial actions $\pi := (\pi_0, \pi_1): \alpha \to \theta$. 
\end{theorem}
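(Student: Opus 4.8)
The plan is to assemble the verifications already carried out in the discussion preceding the statement and then to check that the canonical maps $\pi_0,\pi_1$ genuinely assemble into a morphism of partial actions. The conceptual heart of the matter is that each $\theta_{\overline{g}}$ is well defined, and this is precisely where the hypothesis that $(\mathcal{R},K)$ is a partial action congruence enters: by axiom \textbf{(PC)}, whenever $\mathfrak{c},\mathfrak{o}\in\mathfrak{C}$ satisfy $\overline{\epsilon(\mathfrak{c})}=\overline{\epsilon(\mathfrak{o})}=\overline{g}$ and $\overline{\operatorname{s}(\mathfrak{c})}=\overline{\operatorname{s}(\mathfrak{o})}$, one gets $\overline{\operatorname{t}(\mathfrak{c})}=\overline{\operatorname{t}(\mathfrak{o})}$. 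Hence the assignment $\theta_{\overline{g}}(\overline{\operatorname{s}(\mathfrak{c})})=\overline{\operatorname{t}(\mathfrak{c})}$ depends neither on the chain $\mathfrak{c}$ chosen to represent a given point of $D_{\overline{g}^{-1}}$ nor on the representative $g$ of the coset $\overline{g}$, both ambiguities being absorbed at once by \textbf{(PC)}. That each $\theta_{\overline{g}}$ is a bijection with inverse $\theta_{\overline{g}^{-1}}$ I would read off from Lemma~\ref{l: R chain inverse} together with the identity $\overline{\operatorname{t}(\mathfrak{c})}=\overline{\operatorname{s}(\mathfrak{c}^{-1})}$.

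For the three partial action axioms I would simply cite the computations performed in the text. The equalities $D_{\overline{1}}=\overline{X}$ and $\theta_{\overline{1}}=1_{\overline{X}}$ come from the length-one chains $x\delta_1$. The inclusion $\theta_{\overline{g}}(D_{\overline{g}^{-1}}\cap D_{\overline{s}})\subseteq D_{\overline{g}}\cap D_{\overline{gs}}$ and the partial-composition identity $\theta_{\overline{g}}\theta_{\overline{s}}\subseteq\theta_{\overline{gs}}$ are exactly Lemma~\ref{l: partial action of chains} applied to a pair of chains $\mathfrak{c},\mathfrak{o}$ with $\overline{\epsilon(\mathfrak{c})}=\overline{g}$, $\overline{\epsilon(\mathfrak{o})}=\overline{s}$ and $\overline{\operatorname{s}(\mathfrak{c})}=\overline{\operatorname{t}(\mathfrak{o})}$, the concatenation $\mathfrak{c}\cdot\mathfrak{o}$ furnishing the required witness chain of group-value $\overline{gs}$. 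This establishes that $\theta$ is a partial action of $\overline{G}$ on $\overline{X}$.

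It then remains to show $\pi=(\pi_0,\pi_1)$ is a morphism. The map $\pi_1:G\to G/K$ is a group homomorphism because $K$ is normal, and $\pi_0:X\to\overline{X}$ is the quotient map; both morphism conditions reduce to single-element chains. For condition (i), if $x\in X_g$ then $(x\delta_g)\in\mathfrak{C}$ has $\operatorname{t}(x\delta_g)=x$ and $\epsilon(x\delta_g)=g$, so $\pi_0(x)=\overline{x}\in D_{\overline{g}}=D_{\pi_1(g)}$. For condition (ii), given $x\in X_{g^{-1}}$ I would take $\mathfrak{c}=(\alpha_g(x)\delta_g)$, which is a valid chain since $\alpha_g(x)\in X_g$; here $\operatorname{t}(\mathfrak{c})=\alpha_g(x)$, and $\operatorname{s}(\mathfrak{c})=\alpha_{g^{-1}}(\alpha_g(x))=x$ because $\alpha_{g^{-1}}\alpha_g=\alpha_1=1_X$ on $X_{g^{-1}}$ by axiom (iii) of partial actions, while $\overline{\epsilon(\mathfrak{c})}=\overline{g}$. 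Thus $\theta_{\overline{g}}(\pi_0(x))=\theta_{\overline{g}}(\overline{\operatorname{s}(\mathfrak{c})})=\overline{\operatorname{t}(\mathfrak{c})}=\pi_0(\alpha_g(x))$, as required.

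I expect the only genuinely delicate step to be the well-definedness of $\theta_{\overline{g}}$, as this is the single place where \textbf{(PC)} is indispensable and where independence from both the representative chain and the coset representative must be secured simultaneously. Once that is in hand, the remainder is bookkeeping with the length-one and concatenated chains already analyzed in Lemmas~\ref{l: R chain inverse} and~\ref{l: partial action of chains}.
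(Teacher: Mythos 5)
Your proposal is correct and follows essentially the same route as the paper: the partial-action axioms and the well-definedness of $\theta_{\overline{g}}$ via \textbf{(PC)} are exactly the verifications carried out in the discussion preceding the theorem, and the morphism conditions are checked, as in the paper's proof, by the length-one chain $x\delta_g$ (your chain $\alpha_g(x)\delta_g$ for $x\in X_{g^{-1}}$ is the same computation with $g$ and $g^{-1}$ interchanged).
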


\begin{proof}
    Only the last part of the theorem is new. Notice that $\overline{X_{g}} \subseteq D_{\overline{g}}$ since for all $x \in X_{g}$, we have that $\overline{x} = \overline{\operatorname{t}(x \delta_{g})} \in D_{\overline{g}}$, moreover, from this also follows that 
    \[
        \pi_0(\alpha_{g^{-1}}(x)) = \overline{\alpha_{g^{-1}}(x)} = \overline{\operatorname{s}(x \delta_{g})} = \theta_{\overline{g}^{-1}}(\overline{\operatorname{t}(x \delta_{g})}) =  \theta_{\pi_1(g^{-1})}(\pi_0(x)).
    \]
\end{proof}

\begin{definition}
    In the conditions of Theorem~\ref{t: partial action congruence} we say that $\theta$ is the \textbf{quotient partial action} of $\alpha$ by $(\mathcal{R},K)$ and $\pi: \alpha \to \theta$ the \textbf{quotient partial action morphism}.
\end{definition}

\begin{proposition}
    Let $\alpha = (G, X, \{ X_g \}, \{ \alpha_g \})$ and $\beta = (S, Y, \{ Y_s \}, \{ \beta_s \})$ be partial actions, and let $\phi: \alpha \to \beta$ be a morphism of partial actions. Let $(\mathcal{R}, K)$ be the partial action congruence on $\alpha$ as in Example~\ref{e: morphism of partial actions determines congruences}. We identify $\overline{X} = \im \phi_0 \subseteq Y$ and $\overline{G} = \im \phi_1 \subseteq S$, and set $\iota = (\iota_0, \iota_1)$ where $\iota_0 : \overline{X} \to Y$ and $\iota_1: \overline{G} \to S$ are the inclusion maps. Then $\iota$ is a morphism of partial actions and $\phi = \iota \circ \pi$,
    \[\begin{tikzcd}
        \alpha & \theta & \beta.
        \arrow["\pi"', two heads, from=1-1, to=1-2]
        \arrow["\iota"', hook, from=1-2, to=1-3]
        \arrow["\phi", curve={height=-12pt}, from=1-1, to=1-3]
    \end{tikzcd}\]
\end{proposition}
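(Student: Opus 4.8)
The plan is to exploit the fact that, under the identifications $\overline{X} = \im \phi_0$ and $\overline{G} = \im \phi_1$, the canonical projections $\pi_0$ and $\pi_1$ become nothing but the corestrictions of $\phi_0$ and $\phi_1$ onto their images. Once this is made precise, the factorization $\phi = \iota \circ \pi$ is essentially tautological, and the verification that $\iota$ is a morphism of partial actions reduces to the very identities already established in Example~\ref{e: morphism of partial actions determines congruences}.

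First I would pin down the identifications. By construction the relation $\mathcal{R}$ is the kernel pair of $\phi_0$, so the universal property of the quotient set yields a bijection $\overline{X} = X/\!\sim_{\mathcal{R}} \to \im \phi_0$, $\overline{x} \mapsto \phi_0(x)$; likewise $K = \ker \phi_1$ and the first isomorphism theorem give an isomorphism $\overline{G} = G/K \to \im \phi_1$, $\overline{g} \mapsto \phi_1(g)$. Under these identifications $\pi_0(x) = \overline{x}$ is exactly $\phi_0(x)$ and $\pi_1(g) = \overline{g}$ is exactly $\phi_1(g)$. Since $\iota_0$ and $\iota_1$ are the inclusions, we obtain $(\iota \circ \pi)_0(x) = \iota_0(\phi_0(x)) = \phi_0(x)$ and $(\iota \circ \pi)_1(g) = \iota_1(\phi_1(g)) = \phi_1(g)$, so by Lemma~\ref{l: equality of partial actions} the equality $\phi = \iota \circ \pi$ will follow as soon as $\iota$ is known to be a morphism.

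Next I would check that $\iota = (\iota_0, \iota_1)$ is a morphism of partial actions. That $\iota_1$ is a group homomorphism is immediate, being the inclusion of the subgroup $\im \phi_1 \leq S$. For condition (i), I would take $\overline{\operatorname{t}(\mathfrak{c})} \in D_{\overline{g}}$ with $\mathfrak{c} \in \mathfrak{C}$ and $\overline{\epsilon(\mathfrak{c})} = \overline{g}$; under the identification this element is $\phi_0(\operatorname{t}(\mathfrak{c}))$, and the computation in Example~\ref{e: morphism of partial actions determines congruences} gives $\phi_0(\operatorname{t}(\mathfrak{c})) = \beta_{\phi_1(\epsilon(\mathfrak{c}))}(\phi_0(\operatorname{s}(\mathfrak{c}))) \in Y_{\phi_1(\epsilon(\mathfrak{c}))}$. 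Because $\overline{\epsilon(\mathfrak{c})} = \overline{g}$ forces $\phi_1(\epsilon(\mathfrak{c})) = \phi_1(g) = \iota_1(\overline{g})$, this yields $\iota_0(\overline{\operatorname{t}(\mathfrak{c})}) \in Y_{\iota_1(\overline{g})}$. For condition (ii), I would represent an arbitrary $\overline{z} \in D_{\overline{g}^{-1}}$ as $\overline{z} = \overline{\operatorname{s}(\mathfrak{c})}$ with $\overline{\epsilon(\mathfrak{c})} = \overline{g}$ using \eqref{eq: quotient partial action map}, so that $\theta_{\overline{g}}(\overline{z}) = \overline{\operatorname{t}(\mathfrak{c})}$; the same identity then gives $\iota_0(\theta_{\overline{g}}(\overline{z})) = \phi_0(\operatorname{t}(\mathfrak{c})) = \beta_{\phi_1(g)}(\phi_0(\operatorname{s}(\mathfrak{c}))) = \beta_{\iota_1(\overline{g})}(\iota_0(\overline{z}))$, as required.

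The only point demanding genuine care — and the one I would regard as the main obstacle — is confirming that all of these computations are independent of the chosen representative chain $\mathfrak{c}$, that is, that the identification $\overline{X} \cong \im \phi_0$ is truly compatible with the structure map $\theta$. This is guaranteed precisely by $\sim_{\mathcal{R}}$ being the kernel pair of $\phi_0$: any two chains representing the same class in $D_{\overline{g}}$ have $\mathcal{R}$-equivalent tails and sources, hence equal images under $\phi_0$, so the value $\phi_0(\operatorname{t}(\mathfrak{c}))$ descends to a well-defined function on $\overline{X}$ and the formulas above do not depend on $\mathfrak{c}$. Everything else is a direct transcription of the estimates already carried out when $(\mathcal{R}, K)$ was shown to be a partial action congruence.
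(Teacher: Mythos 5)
Your proposal is correct and follows essentially the same route as the paper: both arguments reduce to the computations of Example~\ref{e: morphism of partial actions determines congruences}, which under the identifications $\overline{X}=\im\phi_0$, $\overline{G}=\im\phi_1$ give $D_{\overline{g}}\subseteq Y_{\overline{g}}$ and $\theta_{\overline{g}}=\beta_{\overline{g}}|_{D_{\overline{g}^{-1}}}$, whence $\iota$ is a morphism and the factorization is immediate. Your extra care about representative-independence is sound but already subsumed in the well-definedness of the quotient partial action via \textbf{(PC)}.
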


\begin{proof}
    We only have to prove that $\iota$ is a morphism of partial actions since the equality $\iota \circ \pi = \phi$ is clear. As we saw in Example~\ref{e: morphism of partial actions determines congruences} we have that $\overline{\operatorname{t}(\mathfrak{c})} \in Y_{\overline{\epsilon(\mathfrak{c})}}$, and $\beta_{\overline{\epsilon(\mathfrak{c})}}(\overline{\operatorname{s}(\mathfrak{c})}) = \overline{\operatorname{t}(\mathfrak{c})}$. Thus, $D_{\overline{g}} \subseteq Y_{\overline{g}}$ for all $\overline{g} \in \overline{G}$ and $\theta_{\overline{g}}(\overline{x}) = \beta_{\overline{g}}(\overline{x})$ for all $\overline{g} \in \overline{G} \subseteq S$ and $x \in D_{\overline{g}}$, so that $\iota$ is a morphism of partial actions.  
\end{proof}

\begin{proposition} \label{p: factorization of morphism of partial actions by congruences}
    Let $\phi: \alpha \to \beta$ be a morphism of partial actions, and let $(\mathcal{R}_{\phi}, K_{\phi})$ be the partial action congruence on $\alpha$ determined by $\phi$ as in Example~\ref{e: morphism of partial actions determines congruences}. If $(\mathcal{R}, K)$ is a partial action congruence on $\alpha$, such that $\mathcal{R} \subseteq \mathcal{R}_{\phi}$ and $K \subseteq K_{\phi}$. Then, there exists a unique morphism of partial actions $\varphi: \theta \to \beta$ such that $\phi = \varphi \circ \pi$, where $\theta$ is the quotient partial action of $\alpha$ by $(\mathcal{R}, K)$ and $\pi: \alpha \to \theta$ is the quotient morphism. Hence, the following is a commutative diagram in \textbf{PA}.
    \[\begin{tikzcd}
        \alpha & \beta \\
        \theta
        \arrow["\phi", from=1-1, to=1-2]
        \arrow["\pi"', from=1-1, to=2-1]
        \arrow["\varphi"', dashed, from=2-1, to=1-2]
    \end{tikzcd}\]
\end{proposition}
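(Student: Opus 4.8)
The plan is to construct $\varphi = (\varphi_0, \varphi_1)$ componentwise by invoking the universal properties of the set quotient and the group quotient, and then to check that the resulting pair is a morphism of partial actions by reusing the computations already carried out in Example~\ref{e: morphism of partial actions determines congruences}.

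First I would build the two components. Since $K \subseteq K_{\phi} = \ker \phi_1$, the group homomorphism $\phi_1 : G \to S$ factors uniquely through the canonical projection $\pi_1 : G \to \overline{G} = G/K$, yielding a unique group homomorphism $\varphi_1 : \overline{G} \to S$ with $\varphi_1 \circ \pi_1 = \phi_1$. Likewise, since $\mathcal{R} \subseteq \mathcal{R}_{\phi}$, the relation $x \sim_{\mathcal{R}} y$ implies $\phi_0(x) = \phi_0(y)$, so $\phi_0 : X \to Y$ factors uniquely through $\pi_0 : X \to \overline{X} = X/\!\sim_{\mathcal{R}}$, giving a unique map of sets $\varphi_0 : \overline{X} \to Y$ with $\varphi_0 \circ \pi_0 = \phi_0$. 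On components this already yields $\varphi_0 \circ \pi_0 = \phi_0$ and $\varphi_1 \circ \pi_1 = \phi_1$, hence $\phi = \varphi \circ \pi$ once $\varphi$ is known to be a morphism, by Lemma~\ref{l: equality of partial actions}.

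Next I would verify that $\varphi = (\varphi_0, \varphi_1)$ satisfies the two axioms of a morphism of partial actions. The key observation is that, since $\mathcal{R} \subseteq \mathcal{R}_{\phi}$, every $\mathcal{R}$-chain is also an $\mathcal{R}_{\phi}$-chain, so the identities established in Example~\ref{e: morphism of partial actions determines congruences} are available for any $\mathfrak{c} \in \mathfrak{C}(\alpha, \mathcal{R})$: namely $\phi_0(\operatorname{t}(\mathfrak{c})) \in Y_{\phi_1(\epsilon(\mathfrak{c}))}$ and $\beta_{\phi_1(\epsilon(\mathfrak{c}))}(\phi_0(\operatorname{s}(\mathfrak{c}))) = \phi_0(\operatorname{t}(\mathfrak{c}))$. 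To check axiom (i), I would take an arbitrary element of $D_{\overline{g}}$, which by \eqref{eq: quotient partial action domain} has the form $\overline{\operatorname{t}(\mathfrak{c})}$ with $\overline{\epsilon(\mathfrak{c})} = \overline{g}$; then $\varphi_0(\overline{\operatorname{t}(\mathfrak{c})}) = \phi_0(\operatorname{t}(\mathfrak{c})) \in Y_{\phi_1(\epsilon(\mathfrak{c}))} = Y_{\varphi_1(\overline{g})}$, using $\phi_1(\epsilon(\mathfrak{c})) = \varphi_1(\overline{\epsilon(\mathfrak{c})}) = \varphi_1(\overline{g})$, so that $\varphi_0(D_{\overline{g}}) \subseteq Y_{\varphi_1(\overline{g})}$. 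For axiom (ii), I would represent an element of $D_{\overline{g}^{-1}}$ as $\overline{\operatorname{s}(\mathfrak{c})}$ with $\overline{\epsilon(\mathfrak{c})} = \overline{g}$ via \eqref{eq: quotient partial action map}; then $\theta_{\overline{g}}(\overline{\operatorname{s}(\mathfrak{c})}) = \overline{\operatorname{t}(\mathfrak{c})}$ by definition of $\theta$, and the two sides of the equivariance condition become
\[
\varphi_0(\theta_{\overline{g}}(\overline{\operatorname{s}(\mathfrak{c})})) = \phi_0(\operatorname{t}(\mathfrak{c})) = \beta_{\phi_1(\epsilon(\mathfrak{c}))}(\phi_0(\operatorname{s}(\mathfrak{c}))) = \beta_{\varphi_1(\overline{g})}(\varphi_0(\overline{\operatorname{s}(\mathfrak{c})})),
\]
as required.

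Finally, uniqueness follows because $\pi_0$ and $\pi_1$ are surjective: any $\varphi'$ with $\varphi' \circ \pi = \phi$ must satisfy $\varphi'_0 \circ \pi_0 = \phi_0$ and $\varphi'_1 \circ \pi_1 = \phi_1$, which forces $\varphi'_0 = \varphi_0$ and $\varphi'_1 = \varphi_1$, hence $\varphi' = \varphi$ by Lemma~\ref{l: equality of partial actions}. I expect the main obstacle to be axiom (ii): one must make sure that the chain $\mathfrak{c}$ chosen to represent a given class in $D_{\overline{g}^{-1}}$ is compatible with the defining rule of $\theta_{\overline{g}}$, and that borrowing the identities of Example~\ref{e: morphism of partial actions determines congruences} is legitimate precisely because $\mathfrak{c}$ is simultaneously an $\mathcal{R}$-chain and an $\mathcal{R}_{\phi}$-chain. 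The well-definedness of $\varphi_0$ and $\varphi_1$ established in the first step is exactly what guarantees that these computations are independent of the representative chain chosen.
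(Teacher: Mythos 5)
Your proposal is correct and follows essentially the same route as the paper: the components $\varphi_0$ and $\varphi_1$ are obtained from the universal properties of the quotients in \textbf{Set} and \textbf{Grp}, uniqueness follows from surjectivity of $\pi_0$ and $\pi_1$ together with Lemma~\ref{l: equality of partial actions}, and the morphism axioms follow from $(\mathcal{R},K)\subseteq(\mathcal{R}_\phi,K_\phi)$. The only difference is that you spell out the verification that $\varphi$ is a morphism of partial actions (via the fact that every $\mathcal{R}$-chain is an $\mathcal{R}_\phi$-chain and the identities of Example~\ref{e: morphism of partial actions determines congruences}), which the paper leaves as a ``direct consequence.''
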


\begin{proof}
    Let $\alpha = (G, X, \{ X_g \}, \{ \alpha_g \})$, $\beta = (S, Y, \{ Y_s \}, \{ \beta_s \})$ and $\theta = (\overline{G}, \overline{X}, \{ D_{\overline{g}} \}, \{ \theta_{\overline{g}} \})$. If such $\phi$ there exists, then $\phi_0 = \varphi_0 \circ \pi_0$ and $\phi_1 = \varphi_1 \circ \pi_1$, therefore $\varphi_0: \overline{X} \to Y$ and $\phi_1: \overline{G} \to S$ are the unique maps that make the following diagrams commute
    \[\begin{tikzcd}
        X & Y & G & S \\
        {\overline{X}} && {\overline{G}}
        \arrow["{\phi_0}", from=1-1, to=1-2]
        \arrow["{\pi_0}"', from=1-1, to=2-1]
        \arrow["{\varphi_0}"', dashed, from=2-1, to=1-2]
        \arrow["{\pi_1}"', from=1-3, to=2-3]
        \arrow["{\phi_1}", from=1-3, to=1-4]
        \arrow["{\varphi_1}"', dotted, from=2-3, to=1-4]
    \end{tikzcd}\]
    in the categories of \textbf{Set} and \textbf{Grp} respectively. Therefore, we only have to verify that $\varphi = (\varphi_0, \varphi_1)$ is a morphism of partial actions, but this is a direct consequence of $(\mathcal{R}, K) \subseteq (\mathcal{R}_{\phi}, K_{\phi})$. 
\end{proof}

\begin{example}
    One may ask if $\phi: \alpha \to \beta$ is a morphism of partial actions such that $\phi_0$ and $\phi_1$ are surjective maps, then the quotient partial action is isomorphic to $\beta$? Contrary, to the global case, this is not always true for partial actions. For example, let $X$ be the open interval $(0,1)$,  $\alpha := \{ X, \mathbb{R}, \{ A_r \}, \{ \alpha_r \} \}$, where $A_{r}= (0, 2^{-r} \cdot \frac{1}{2})$ and $A_{-r}:=(0, \frac{1}{2})$ for $r \geq 0$, and $\alpha_r(x) = 2^{r} \cdot x$ for all $r \in \mathbb{R}$. Analogously we define $\beta := \{ X, \mathbb{R}, \{ B_r \}, \{ \beta_r \} \}$, where $B_{r}= (0, 2^{-r})$ and $A_{-r}:=(0, 1)$ for $r \geq 0$, and $\beta_r(x) = 2^{r} \cdot x$ for all $r \in \mathbb{R}$. Thus, the identity maps on $X$ and $\mathbb{R}$ determine a morphism of partial actions $\alpha \to \beta$, the partial action congruence determined by the inclusion is $(\varnothing, \{ 1 \})$, thus the quotient partial action is just $\alpha$, but it is clear that $\alpha$ and $\beta$ are not isomorphic.
\end{example}

 \textbf{Notation:} We denote the empty set and the empty map by $\varnothing$ and $\emptyset$ respectively.

 \begin{remark} \label{r: PA has initial and terminal object}
     Notice that the category \textbf{PA} has initial and terminal objects. Indeed, it is easy to verify that the action of the group with one element $\{ 1 \}$ acting on a one-point set a terminal object of \textbf{PA}. Since, partial actions admit empty sets as domains we conclude that the group $\{ 1 \}$ acts on the empty set $\varnothing$ via the $\varnothing$ map, and this partial action is the initial object of \textbf{PA}.
 \end{remark}

\subsection{The category \textbf{PA} is complete and cocomplete}

The main objective of this section is to prove the following:
\begin{theorem}
    The category \textbf{PA} is complete and cocomplete.
\end{theorem}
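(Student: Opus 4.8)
The plan is to reduce the statement to the construction of four universal cones by way of the standard criterion: a category is complete if and only if it has all small products and all equalizers, and it is cocomplete if and only if it has all small coproducts and all coequalizers. It therefore suffices to construct each of these four (co)limits explicitly in \textbf{ParAct}, the terminal and initial objects having already been exhibited in Remark~\ref{r: ParAct has initial and terminal object}.

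For \emph{products}, given a family $\{\alpha^i = (G^i, X^i, \{X^i_g\}, \{\alpha^i_g\})\}_{i \in I}$, I would take the group $\prod_i G^i$, the set $\prod_i X^i$, and for $g = (g_i)_i$ set $D_g := \prod_i X^i_{g_i}$ with $\theta_g$ acting coordinatewise. The partial-action axioms hold factorwise, the coordinate projections are morphisms since $\pi^j_0(D_g) \subseteq X^j_{g_j} = X^j_{\pi^j_1(g)}$, and the universal property descends from those of $\prod_i G^i$ in \textbf{Grp} and $\prod_i X^i$ in \textbf{Set} because the map induced by a cone is automatically a morphism of partial actions (both defining conditions are inherited factorwise). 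For \emph{equalizers} of $\phi, \varphi : \alpha \to \beta$, I would take $G' := \{g \in G : \phi_1(g) = \varphi_1(g)\} \leq G$ and $X' := \{x \in X : \phi_0(x) = \varphi_0(x)\} \subseteq X$, with $X'_g := X' \cap X_g$ and $\alpha_g$ restricted. The point to verify is that $\alpha_g$ genuinely restricts to a bijection $X'_{g^{-1}} \to X'_g$: for $g \in G'$ and $x \in X' \cap X_{g^{-1}}$ one computes $\phi_0(\alpha_g(x)) = \beta_{\phi_1(g)}(\phi_0(x)) = \beta_{\varphi_1(g)}(\varphi_0(x)) = \varphi_0(\alpha_g(x))$, so $\alpha_g(x) \in X'$; the universal property is then routine.

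For \emph{coproducts}, given $\{\alpha^i\}_{i \in I}$, I would take the free product $G := \ast_i G^i$ and the disjoint union $X := \coprod_i X^i$. For a nontrivial $g$ in the canonical image of a single factor $G^j$ I set $D_g := X^j_g \subseteq X$ and $\theta_g := \alpha^j_g$; I set $D_1 := X$; and for every $g$ whose reduced word meets at least two factors I set $D_g := \varnothing$ and $\theta_g := \emptyset$. The decisive check is that this is a partial action: in the partial composition $\theta_g\theta_h$ the image of $\theta_h$ lies in one component $X^{j'}$ while the domain of $\theta_g$ lies in another component $X^j$ with $j \neq j'$ whenever $gh$ is reduced of length $\geq 2$, forcing both sides of $\theta_g\theta_h \subseteq \theta_{gh}$ to be empty, while the single-factor case reduces to the axiom in $\alpha^j$. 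The canonical inclusions are morphisms, and the universal property again descends from the free product in \textbf{Grp} and the disjoint union in \textbf{Set}.

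The real content is the \emph{coequalizer} of $\phi, \varphi : \alpha \to \beta$, where the quotient machinery of the previous subsection does the work. By Lemma~\ref{l: intersection of congruences is congruence} the partial action congruences on $\beta$ form a meet-semilattice, and the family of those with $\phi_0(x) \sim_{\mathcal{R}} \varphi_0(x)$ for all $x$ and $\phi_1(g)\varphi_1(g)^{-1} \in K$ for all $g$ is nonempty (it contains the total congruence $(Y \times Y, S)$), so there is a smallest such congruence $(\mathcal{R}, K)$. I would take the coequalizer to be the quotient partial action $\theta$ of $\beta$ by $(\mathcal{R}, K)$ with $q := \pi : \beta \to \theta$; then $q\phi = q\varphi$ by Lemma~\ref{l: equality of partial actions}. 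Given any $h : \beta \to \delta$ with $h\phi = h\varphi$, its associated congruence $(\mathcal{R}_h, K_h)$ from Example~\ref{e: morphism of partial actions determines congruences} contains $(\mathcal{R}, K)$ by minimality, so Proposition~\ref{p: factorization of morphism of partial actions by congruences} yields a unique $\bar h : \theta \to \delta$ with $h = \bar h \circ q$, which is precisely the universal property. I expect the direct verification of the free-product partial action in the coproduct and the identification of this smallest congruence to be the two main obstacles; once they are settled, completeness and cocompleteness follow from the standard reduction to (co)products and (co)equalizers.
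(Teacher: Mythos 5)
Your proposal is correct and follows essentially the same route as the paper: componentwise products, equalizers as restrictions to the subgroup and subset where the two morphisms agree, coproducts via the free product with empty domains on mixed words, and the coequalizer as the quotient by a smallest congruence, with the factorization through quotients (Proposition~\ref{p: factorization of morphism of partial actions by congruences}) supplying the universal property. The only cosmetic difference is in the coequalizer: the paper intersects the congruences induced by all coequalizing morphisms, while you take the meet of all congruences identifying $\phi_0(x)$ with $\varphi_0(x)$ and containing $\phi_1(g)\varphi_1(g)^{-1}$ — these two families coincide (each induced congruence satisfies your conditions, and each congruence satisfying them is induced by its own quotient map), so the resulting congruence is the same.
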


We will show that the category \textbf{PA} has products, coproducts, equalizers and coequalizers. 

\subsubsection{Completeness}

Let $\{ \alpha^i \}_{i \in I}$ be a family of partial actions, where 
\[
   \alpha^{i}:=(X^i, G^i, \{ X^i_{g^i} \}_{g^i \in G^i}, \{ \alpha^i_{g^i} \}_{g^i \in G^i} ).
\]
We define:   
\begin{enumerate}[(i)]
    \item $G = \prod_{i \in I} G^i$, 
    \item $X = \prod_{i \in I} X^i$,
    \item for $(g^i)_{i \in I} \in G$ we set $X_{(g^i)}:= \prod_{i \in I} X^i_{g^i}$,
    \item for $(x^i)_{i \in I} \in X_{(g^i)_{i \in I}^{-1}}$, define $\alpha_{(g^i)_{i \in I}}((x^i)_{i \in I}) := (\alpha_{g^i}(x^i))_{i \in I}$.
\end{enumerate}
It is clear that $\alpha$ is a partial action of $G$ on $X$, for all $i \in I$ we define $\pi^i: \alpha \to \alpha^i$, determined by the natural maps $G \to G^i$ and $X \to X^i$, then $\pi^i$ is a morphism of partial actions. Let $\theta := (Y, S, \{ Y_s \}, \{ \theta_s \})$ be a partial action and $\{ \phi^i: \theta \to \alpha^i \}_{i \in I}$ a family of morphisms of partial actions. Set
\[
    \psi_0: Y \to X, \text{ such that } \psi_0(y) = (\phi_0^i(y))_{i \in I}
\]
and
\[
    \psi_1: S \to G, \text{ such that } \psi_1(s) = (\phi_1^i(s))_{i \in I}.
\]
Then $\psi:= (\psi_0, \psi_1): \theta \to \alpha$ is a morphism of partial actions such that $\phi^i = \phi \circ \pi^i$. Henceforth, $\alpha$ is the product of the family $\{ \alpha^i \}$. Thus, we have the following proposition.

\begin{proposition} \label{p: PA product}
    The category \textbf{PA} has products.
\end{proposition}
Let $\phi: \alpha \to \beta$ and $\varphi: \alpha \to \beta$ morphism of partial group actions, where $\alpha = (G, X, \{ X_g \}, \{ \alpha_g \})$, $\beta = (S, Y, \{ Y_s \}, \{ \beta_s \})$. Define:
\begin{enumerate}[(i)]
    \item $E := \{ g \in G : \phi_1(g) = \varphi(g) \}$, 
    \item $Z := \{ x \in X : \phi_0(x) = \varphi_0(x) \}$,
    \item $Z_g := X_g \cap Z$ for $g \in E$,    
    \item $\theta_g := \alpha_g|_{Z_{g^{-1}}}$ for $g \in E$.
\end{enumerate}
Notice that if $x \in Z_{g^{-1}}$, $g \in E$, then
\[
    \phi_0(\alpha_g(x)) = \beta_{\phi_1(g)}(\phi_0(x)) = \beta_{\varphi_1(g)}(\varphi_0(x)) = \varphi_0 (\alpha_g(x)).
\]
Hence, $\alpha_{g}(x) \in Z_g$ for all $g \in E$ and $x \in Z_{g^{-1}}$. Thus, $\theta := (Z, E, \{ Z_g \}_{g \in E}, \{ \theta_g \}_{g \in E})$ is a well-defined partial action of $E$ on $Z$. Furthermore, since $Z$ is the equalizer of $\phi_0, \varphi_0 : X \to Y$ and $E$ is the equalizer of $\phi_1, \varphi_1 : G \to S$, it is clear that $\theta$ is the equalizer of $\phi$ and $\varphi$, where the associated map is the inclusion $\iota: \theta \to \alpha$.

\begin{proposition} \label{p: PA equalizer}
    The category \textbf{PA} has equalizers.
\end{proposition}

\begin{corollary}
    The category \textbf{PA} is complete.
\end{corollary}

\begin{proof}
    By Proposition~\ref{p: PA product} and Proposition~\ref{p: PA equalizer} the category \textbf{PA} has products and equalizers, then by \cite[Proposition 5.1.26]{LBCT} we conclude that \textbf{PA} is complete.
\end{proof}

\subsubsection{Cocompleteness}

Let $\{ \alpha^i \}_{i \in I}$ be a family of partial actions, where 
\[
   \alpha^{i}:=(X^i, G^i, \{ X^i_{g^i} \}_{g^i \in G^i}, \{ \alpha^i_{g^i} \}_{g^i \in G^i} ).
\]
We set $G = \coprod_{i \in I} G^i$ and $X = \coprod_{i \in I} X^i$. Recall that the coproduct in the category of groups is the free product. We identify $G^i$ with its natural inclusion into $G$ and $X^i$ with its inclusion into $X$.
\begin{enumerate}[(i)]
    \item We set $X_1 = X$ and $\theta_1 = 1_X$,
    \item For all $g \in G$ we define
        \[
           X_{g} := \left\{\begin{matrix}
              X_{g}^{i}  & \text{ if } g \in G^{i} \text{ for some } i \in I\\ 
               \varnothing & \text{ otherwise}
            \end{matrix}\right.
        \]

    \item For all $g \in G$ we define
        \[
           \alpha_{g} := \left\{\begin{matrix}
              \alpha_{g}^{i}  & \text{ if } g \neq 1, \text{ and } g \in G^{i} \text{ for some } i \in I\\ 
               \emptyset      & \text{ otherwise}.
            \end{matrix}\right.
        \]
\end{enumerate}
Then $\theta$ is a partial action of $G$ on $X$. Furthermore, since $G$ and $X$ are the coproducts of $\{ G^i \}$ and $\{ X^i \}$ in their respective categories then $\theta$ is the coproduct of $\{ \alpha^i \}$ where the maps $\iota_i :\alpha_i \to \theta$ are determined by the natural inclusion.

\begin{proposition} \label{p: PA has coproducts}
    The category \textbf{PA} has coproducts.
\end{proposition}

Now we want to verify the existence of coequalizers in \textbf{PA}. Let $\phi: \alpha \to \beta$ and $\varphi: \alpha \to \beta$ be morphisms of partial actions, where $\alpha = (G, X, \{ X_g \}, \{ \alpha_g \})$, $\beta = (S, Y, \{ Y_s \}, \{ \beta_s \})$. Set 
\[
    W:= \{ \psi: \beta \to \gamma : \psi \text{ is a morphism of partial actions such that } \psi \circ \phi = \psi \circ \varphi \}.
\]
We define $\mathfrak{W}$ as the set of partial action congruences induced by the elements of $W$, i.e., $(\mathcal{R}, K) \in \mathfrak{W}$ if, and only if, $(\mathcal{R}, K)$ is a partial action congruence and there exists $\psi \in W$ such that $\psi$ determines $(\mathcal{R}, K)$ in the sense of Example~\ref{e: morphism of partial actions determines congruences}. Notice that by Remark~\ref{r: PA has initial and terminal object} we have that $W$ is not empty, thus $\mathfrak{W} \neq \varnothing$. 

Let $\mathcal{C}$ be the partial action congruence on $\beta$ such that $\mathcal{C} = \cap_{C \in \mathfrak{W}} C$, i.e., $\mathcal{C}$ is the intersection of all the congruences in $\mathfrak{W}$. Let $\pi^\mathcal{C}: \beta \to \theta^\mathcal{C}$ be the quotient morphism of partial actions, where $\theta^\mathcal{C}$ is the quotient partial action of $\beta$ by $\mathcal{C}$. Note that since $\mathcal{C}$ is the intersection of all the congruences in $\mathfrak{W}$, then $\pi_j^\mathcal{C} \circ \phi_j = \pi_j^\mathcal{C} \circ \varphi_j$, for $j \in \{ 0, 1 \}$. Thus, by Remark~\ref{r: equality of partial actions}, $\pi^{\mathcal{C}} \circ \phi = \pi^{\mathcal{C}} \circ \varphi$. Finally, by Proposition~\ref{p: factorization of morphism of partial actions by congruences} we have that $\theta^\mathcal{C}$ is the coequalizer of $\phi$ and $\varphi$.

\begin{proposition} \label{p: PA has coequalizer}
    The category \textbf{PA} has coequalizers.
\end{proposition}

\begin{corollary}
    The category \textbf{PA} is cocomplete.
\end{corollary}

\begin{proof}
    By Proposition~\ref{p: PA has coproducts}, Proposition~\ref{p: PA has coequalizer} and the dual statement of \cite[Proposition 5.1.26]{LBCT} we conclude that \textbf{PA} is cocomplete.
\end{proof}


\section{Partial actions and groupoids}

We will use the usual notation for groupoids, i.e., $\Gamma {\rightrightarrows} X$ denotes a groupoid with morphism $\Gamma$ and set of objects $X$. Usually by abuse of notation if there is no ambiguity we will denote the groupoid $\Gamma {\rightrightarrows} X$ just by $\Gamma$. Moreover, $\operatorname{t}:\Gamma \to X$ and $\operatorname{s}: \Gamma \to X$ denotes the usual target and source maps. \underline{In this section}, the letters $\alpha$ and $\beta$ will be used to denote the morphisms of a groupoid $\Gamma$ and do not represent partial actions anymore.

\subsection{From partial actions to groupoids}

Groupoids are small categories whose every morphism is an isomorphism. This short definition is equivalent to the following one (Definition 3.1 \cite{IRGGd}): A groupoid $\Gamma \rightarrow X$ consists of a set $\Gamma$, a set $X$, and two maps $s,t : \Gamma \rightarrow X$, called the source and target map, respectively. The set $X$ is called the space of objects of the groupoid. The elements in $\Gamma$ will be called the morphisms or arrows of the groupoid. Two morphisms $\beta, \alpha \in \Gamma$ will be said to be composable if $t(\alpha) = s(\beta)$. The set of composable pairs is denoted by $\Gamma^{(2)}$. The sets $\Gamma$ and $X$ are equipped with a map $\cdot : \Gamma^{(2)} \rightarrow \Gamma$, such that:
\begin{enumerate}
    \item (Associativity) $(\gamma \cdot \beta) \cdot \alpha = \gamma \cdot (\beta \cdot \alpha)$, whenever $(\beta, \alpha)$ and $(\gamma, \beta)$ are in $\Gamma^{(2)}$.
    \item (Units) For any object $x \in \Omega$, there exists a morphism $1_x : x \rightarrow x$, called the unit or identity at $x$, such that if $\alpha : x \rightarrow y$, then $\alpha \cdot 1_x = \alpha$ and $1_y \cdot \alpha = \alpha$.
    \item (Inverse) For any morphism $\alpha : x \rightarrow y$, there exists another morphism $\beta : y \rightarrow x$, such that $\beta \cdot \alpha = 1_x$ and $\alpha \cdot \beta = 1_y$. Such morphism will be denoted as $\alpha^{-1}$.
\end{enumerate}
We say that a groupoid is \textbf{connected} (or transitive) if for any pair of elements $x, y \in X$, there exists a morphism $\gamma \in \Gamma$ such that $\operatorname{s}(\gamma) = x$ and $\operatorname{t}(\gamma) = y$, equivalently $\hom_{\Gamma}(x, y) \neq \varnothing$.

Let $\theta:= (G, X, \{ X_{g} \}_{g \in G}, \{ \theta_{g} \}_{g \in G})$ be a partial action of a group $G$ on a set $X$. Then, we can obtain a groupoid $\Gamma_{\theta} {\rightrightarrows} X$ as follows: the set of objects is $X$, and we define the morphisms 
\[
    \Gamma_{\theta}:=\left\{(\theta_g(x), g, x) : g \in G \text{ and } x \in X_{g^{-1}} \right\},
\]
with the composition given by 
 \begin{equation} \label{eq: partial action groupoid composition}
     (\theta_g(y), g , y)(\theta_{h}(x),h,x) = (\theta_{gh}(x), gh, x) \Longleftrightarrow x \in \dom \theta_g \theta_h \text{ and } y = \theta_{h}(x).
 \end{equation}
 It is easy to verify that $\Gamma_{\theta}$ is in fact a groupoid, such that the source and target maps are given by
 \[
     \operatorname{s}(\theta_{g}(x), g, x) = \theta_{g}(x) \text{ and } \operatorname{t}((\theta_{g}(x), g, x) = x,
 \]
 and with units $\{ (x, 1, x) \}_{x \in X}$. This groupoid is known as the \textbf{partial action groupoid} of $\theta$. Furthermore, if $\theta$ and $\rho$ are partial actions, and $\phi=(\phi_0, \phi_1) : \theta \to \rho$ is a morphism of partial actions, then the map 
 \begin{equation} \label{eq: induced partial action groupoid morphism}
 	\Psi(\phi): \Gamma_{\theta} \to \Gamma_{\rho} \text{ such that } \Psi(\phi)(y,g,x):= (\phi_0(y), \phi_1(g), \phi_0(x))
 \end{equation}
 is a morphism of groupoids. Therefore, if we set $\Psi(\theta) := \Gamma_{\theta}$ we obtain a functor $\Psi: \textbf{PA} \to \textbf{Grpd}$. It is clear that partial action groupoids are a direct generalization of action groupoids (see, for example, \cite[Definition 4.7]{IRGGd}). Partial action groupoids were previously studied in a continuous context in \cite{FAPAAG}.

 We can characterize isomorphism of partial actions using their respective associated groupoids:

 \begin{lemma} \label{l: isomorphism of partial actions via groupoids}
     Let $\theta$ and $\theta'$ be partial actions, and $f: \theta \to \theta'$ a morphism such that $f_{0}$, $f_{1}$ are isomorphism. Then, $\Psi(f): \Psi(\theta) \to \Psi(\theta')$ is an isomorphism of groupoids if, and only if, $f$ is an isomorphism of partial actions.
 \end{lemma}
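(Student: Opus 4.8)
The plan is to isolate the single condition on the domains that separates an isomorphism of partial actions from a mere pair of bijections, and then to show that $\Psi(f)$ is a groupoid isomorphism precisely when that condition holds. Throughout, write $\theta = (G, X, \{ X_g \}, \{ \theta_g \})$ and $\theta' = (S, Y, \{ Y_s \}, \{ \theta'_s \})$, and $f = (f_0, f_1)$.

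First I would reduce being an isomorphism of partial actions to a domain equality. Since $f_0$ and $f_1$ are bijections, the only candidate for an inverse morphism is the pair $(f_0^{-1}, f_1^{-1})$; hence $f$ is an isomorphism of partial actions if and only if $(f_0^{-1}, f_1^{-1})$ is itself a morphism of partial actions. Checking condition (i) of the definition of morphism for $(f_0^{-1}, f_1^{-1})$, namely $f_0^{-1}(Y_s) \subseteq X_{f_1^{-1}(s)}$, and applying the bijection $f_0$ with $g = f_1^{-1}(s)$, this amounts to $Y_{f_1(g)} \subseteq f_0(X_g)$ for all $g \in G$. Together with condition (i) for $f$, which already gives $f_0(X_g) \subseteq Y_{f_1(g)}$, this is equivalent to the domain condition
\[
  f_0(X_g) = Y_{f_1(g)} \quad \text{for all } g \in G.
\]
I would then note that condition (ii) for $(f_0^{-1}, f_1^{-1})$ follows automatically once this equality holds, simply by applying $f_0^{-1}$ to condition (ii) of $f$. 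Thus $f$ is an isomorphism of partial actions if and only if the displayed domain condition holds.

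Next I would analyze $\Psi(f)$ on arrows. Since $\Psi(f)(y,g,x) = (f_0(y), f_1(g), f_0(x))$ and both $f_0, f_1$ are injective, $\Psi(f)$ is injective on arrows; and since $f_0$ is a bijection it is a bijection on objects. As a bijective morphism of groupoids is automatically an isomorphism, $\Psi(f)$ is an isomorphism if and only if it is surjective on arrows. The key computation relates this surjectivity to the same domain condition: an arrow of $\Gamma_{\theta'}$ has the form $(\theta'_s(x'), s, x')$ with $x' \in Y_{s^{-1}}$; setting $g = f_1^{-1}(s)$ and $x = f_0^{-1}(x')$, such an arrow lies in the image of $\Psi(f)$ exactly when $x \in X_{g^{-1}}$, i.e. when $x' \in f_0(X_{g^{-1}})$, the equality $f_0(\theta_g(x)) = \theta'_s(x')$ then being forced by condition (ii) of $f$. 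As $x'$ ranges over $Y_{s^{-1}} = Y_{f_1(g^{-1})}$ and $g$ ranges over $G$, surjectivity on arrows is thus equivalent to $Y_{f_1(h)} \subseteq f_0(X_h)$ for all $h$, which together with condition (i) is again the domain condition above.

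Combining the two reductions gives the equivalence. Concretely, for the forward direction I would take $f$ an isomorphism and, for each arrow $(\theta'_s(x'), s, x')$ of $\Gamma_{\theta'}$, use the domain equality to get $x = f_0^{-1}(x') \in X_{g^{-1}}$ with $g = f_1^{-1}(s)$, producing the preimage $(\theta_g(x), g, x)$; for the converse I would take any $y' \in Y_{f_1(g)}$, form the arrow $(y', f_1(g), \theta'_{f_1(g)^{-1}}(y'))$ of $\Gamma_{\theta'}$, and pull it back along $\Psi(f)$, whose preimage $(\theta_g(x), g, x)$ satisfies $f_0(\theta_g(x)) = y'$ with $\theta_g(x) \in X_g$, so $y' \in f_0(X_g)$. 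The main obstacle, and the whole point of the lemma given the warning after the definition of morphism, is to keep straight that $\Psi(f)$ can be bijective on objects and injective on arrows yet fail surjectivity on arrows exactly when some inclusion $f_0(X_g) \subseteq Y_{f_1(g)}$ is strict; all remaining verifications are routine.
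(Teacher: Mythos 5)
Your proof is correct and follows essentially the same route as the paper: the paper's entire argument is the observation that $\Psi(f)$ being an isomorphism forces $x \in \dom\theta_g \Leftrightarrow f_0(x) \in \dom\theta'_{f_1(g)}$, which is exactly your domain equality $f_0(X_g) = Y_{f_1(g)}$, and hence that $(f_0^{-1},f_1^{-1})$ is a morphism. You simply spell out both directions in more detail (the paper leaves the converse to functoriality of $\Psi$, whereas you argue surjectivity on arrows directly), but the key idea is identical.
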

 \begin{proof}
     Observe that if
    \[
        (\theta_{g}(x), g, x) \mapsto (f_{0}(\theta_{g}(x)), f_1(g), f_0(x))=(\theta'_{f_1(g)}(f_0(x)), f_1(g), f_0(x))
    \]
     is an isomorphism of groupoids, then $x \in \dom \theta_{g}$ if, and only if, $f_{0}(x) \in \dom \theta'_{f_{1}(g)}$. This implies that $(f_{0}^{-1}, f_{1}^{-1})$ is a morphism of partial group actions. Consequently, $f$ is an isomorphism of partial actions.
 \end{proof}

 \textbf{Notation:} Sometimes for the sake of simplicity we will denote the elements of the partial action groupoid $\Gamma_{\theta}$ by
 \begin{equation}
    g_{x} := (\theta_{g}(x), g , x) \in \Gamma_{\theta}.
 \end{equation}
 In this case \eqref{eq: partial action groupoid composition} takes the form:  
 \begin{equation} \label{eq: composition rule simplified partial action groupoid}
     g_{y} h_{x} = (gh)_{x} \Longleftrightarrow \theta_{h}(x) = y,
 \end{equation}
 and \eqref{eq: induced partial action groupoid morphism} takes the form:
 \begin{equation}
     \Psi(\phi)(g_{x}) := \phi_1(g)_{\phi_0(x)}.
 \end{equation}
We will employ one notation or the other as deemed appropriate.    

\begin{lemma} \label{l: coproduct commutes with Psi}
    Let $\{ \theta^{i} \}$ be a family of partial actions, then $\Psi\big(\coprod_{i} \theta^{i}\big) = \bigsqcup_{i} \Psi(\theta^{i})$.
\end{lemma}
\begin{proof}
    Let $\theta = \coprod_{i} \theta^{i}$, then by definition of $\Gamma_{\theta}$ we get
    \begin{align*}
        \Gamma_{\theta}
        &= \big\{ (\theta_g(x), g, x ) : g \in G \text{ and } x \in X_{g^{-1}} \big\} \\
        &= \big\{ (\theta^{i}_g(x), g, x ) : g \in G^{i} \text{ for some } i \text{ and } x \in X_{g^{-1}}^{i} \big\} \\
        &= \bigsqcup_{i} \Gamma_{\theta^{i}}.
    \end{align*}
\end{proof}

 \subsection{From groupoids to partial actions}

It is natural to ask whether a groupoid arises from a partial action. It is well-known that if the groupoid $\Gamma {\rightrightarrows} X$ is connected, then there exists a group $G$ with a global action $\theta$ on $X$ such that $\Gamma_{\theta} {\rightrightarrows} X \cong \Gamma {\rightrightarrows} X$ (such a global action is not necessarily unique). For the sake of completeness, we provide a brief proof here:

\begin{proposition} \label{p: connected groupoids are global actions}
    Let $\Gamma \rightrightarrows X$ be a connected groupoid. Then, there exists a group $G$ and a global action $\theta$ of $G$ on $X$ such that $\Gamma_{\theta} \cong \Gamma$.
\end{proposition}
\begin{proof}
    Let $T \rightrightarrows X$ be the tree groupoid on the set $X$, i.e. the groupoid such that for all $x, y \in X$ there exists a unique morphism $x \to y$ in $T$. Let $G$ be the isotropy group of an arbitrary element of $\Gamma$. Since, $\Gamma$ is connected, then by \cite[Proposition 5.7]{IRGGd} we have that $\Gamma \rightrightarrows X$ is isomorphic to the product groupoid $G \times T \rightrightarrows X$. Let $\star : X \times X \to X$ a group structure on $X$, define $\theta: G \times X \to X$ such that $\theta_{(g, x)}(y) := x \star y$. Then, $\theta$ is a well-defined global action of $G \times X$ on $X$.
    Furthermore, $\Gamma_{\theta} \rightrightarrows X$ is a connected groupoid with isotropy group $G$. Thus, by \cite[Proposition 5.7]{IRGGd}, it is also isomorphic to $G \times T \rightrightarrows X$, what concludes our proof.
\end{proof}

In general, this is not true for arbitrary groupoids. For example, a groupoid consisting of two disconnected objects such that the isotropy group of those objects are not isomorphic cannot be obtained from a global action. But we can obtain any groupoid $\Gamma$ from a partial action as we will see in the next proposition: 

\begin{proposition} \label{p: groupoids are partial actions}
    Let $\Gamma {\rightrightarrows} X$ be a groupoid, then there exists a group $G$ and partial action $\theta$ of $G$ on $X$ such that $\Psi(\theta) \cong \Gamma$.
\end{proposition}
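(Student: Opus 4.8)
The plan is to reduce to the case of a connected groupoid and then glue the pieces together using the coproduct in \textbf{ParAct} furnished by Proposition~\ref{p: ParAct has coproducts}. First I would decompose $\Gamma \rightrightarrows X$ into its connected components $\{ \Gamma_i \rightrightarrows X_i \}_{i \in I}$, so that $X = \coprod_{i \in I} X_i$ and $\Gamma = \coprod_{i \in I} \Gamma_i$ is the coproduct (disjoint union) of the $\Gamma_i$ in \textbf{Grpd}; this is immediate, since by definition of connected component there are no morphisms between objects lying in distinct components.

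For each $i \in I$, the groupoid $\Gamma_i$ is connected, so by the well-known fact recalled above there exist a group $G_i$ and a global action $\theta^i$ of $G_i$ on $X_i$ with $\Psi(\theta^i) = \Gamma_{\theta^i} \cong \Gamma_i$. I would then form the coproduct $\theta := \coprod_{i \in I} \theta^i$ in \textbf{ParAct}, which by the construction preceding Proposition~\ref{p: ParAct has coproducts} is a partial action of the free product $G := \coprod_{i \in I} G_i$ on $X = \coprod_{i \in I} X_i$, with $X_g = X_g^i$ and $\theta_g = \theta_g^i$ whenever $1 \neq g \in G_i$, with $X_g = \varnothing$ and $\theta_g = \emptyset$ for every $g \neq 1$ lying in no factor, and with $X_1 = X$, $\theta_1 = 1_X$.

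The remaining, and main, step is to identify $\Psi(\theta) = \Gamma_{\theta}$ with $\Gamma$ by showing that $\Psi$ carries this coproduct to the disjoint union $\coprod_{i \in I} \Gamma_{\theta^i}$. Concretely, a non-identity morphism $(\theta_g(x), g, x)$ of $\Gamma_\theta$ can only occur when $X_{g^{-1}} \neq \varnothing$, which by the previous paragraph forces $g$ to lie in a unique factor $G_i$; since each $\theta^i$ is global we then have $X_{g}^i = X_i$, so these morphisms are precisely the non-identity morphisms of $\Gamma_{\theta^i}$, all internal to the component $X_i$. The identities $(x,1,x)$ are distributed among the components according to which $X_i$ contains $x$. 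Because each $G_i$ embeds as a subgroup of the free product $G$, composition of two morphisms supported on the same component agrees with composition in $\Gamma_{\theta^i}$, while morphisms from distinct components are never composable; hence the tautological assignment is a groupoid isomorphism $\Gamma_\theta \cong \coprod_{i \in I} \Gamma_{\theta^i}$.

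Combining these identifications yields $\Psi(\theta) = \Gamma_\theta \cong \coprod_{i \in I} \Gamma_{\theta^i} \cong \coprod_{i \in I} \Gamma_i = \Gamma$, as required. I expect the only delicate point to be this last step: one must verify that passing to the free product creates no spurious morphisms between components and that the elements of $G$ belonging to no factor contribute nothing to $\Gamma_\theta$. Both facts follow directly from the explicit description of the domains $X_g$ in the coproduct partial action, so everything else should be routine.
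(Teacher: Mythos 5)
Your proposal is correct and follows essentially the same route as the paper: decompose $\Gamma$ into connected components, realize each component as the partial action groupoid of a global action, and take the coproduct in \textbf{ParAct}. The only difference is that you spell out the verification that $\Gamma_{\theta} \cong \bigsqcup_{i} \Gamma_{\theta^{i}}$ (empty domains for free-product elements outside the factors, non-composability across components), which the paper leaves as an observation.
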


\begin{proof}
    Let $\Gamma$ be a groupoid and $\{ \Gamma_{i} \}_{i \in I}$ be the set of its connected components. By Proposition~\ref{p: connected groupoids are global actions}, for each $i \in I$, we can choose a global action $\theta^i$ such that $\Gamma_{\theta^i} \cong \Gamma_{i}$. Let $\theta := \coprod_{i \in I} \theta^i$. Then, by Lemma~\ref{l: coproduct commutes with Psi}, we have:
    \[
       \Gamma_{\theta} = \bigsqcup_{i \in I} \Gamma_{\theta_{i}} \cong \bigsqcup_{i \in I} \Gamma_i = \Gamma,
    \]
    since $\Gamma_{\theta^{i}} \cong \Gamma_{i}$. Hence, $\Gamma \cong \Gamma_{\theta}$.
\end{proof}

An interesting motivation for studying the relationship between groupoids and partial group actions is the observation that any global action of a groupoid on a set corresponds to a partial action equipped with a morphism of partial actions. We recall the definition of a groupoid action on a set from \cite[Definition 4.8]{IRGGd}:

\begin{definition} \label{d: groupoid action}
    Let $\Gamma {\rightrightarrows} X$ be a groupoid and $\mu: Y \to X$ a map of sets. Define the composable set $\Gamma \circ Y = \{ (\gamma, y) \in \Gamma \times Y : \operatorname{s}(\gamma) = \mu(y) \}$. An action of $\Gamma$ on $Y$ is a map $\Lambda : \Gamma \circ Y \to Y$, such that the family of maps $\Lambda_{\gamma} := \Lambda(\gamma, -): \mu^{-1}(\operatorname{s}(\gamma)) \to Y$, satisfy:
    \begin{enumerate}[(i)]
        \item $\mu(\Lambda_{\gamma}(y)) = \operatorname{t}(\gamma)$,
        \item $\Lambda_{1_{x}}(y) = y$ for all any $y \in \mu^{-1}(x)$, and
        \item $\Lambda_{\alpha} \Lambda_{\beta} = \Lambda_{\alpha \beta} (y)$ for $\alpha, \, \beta \in \Gamma$ such that $\operatorname{s}(\alpha) = \operatorname{t}(\beta)$.
    \end{enumerate}
\end{definition}

\begin{proposition} \label{p: groupoids actions are partial actions}
    Let $\Gamma {\rightrightarrows} X$ be a groupoid, and $\Lambda: \Gamma \circ Y \to Y$ an action of $\Gamma$ on $Y$ with momentum map $\mu: Y \to X$. Let $\theta:= \big( G, X, \{ X_{g} \}, \{ \theta_{g} \} \big)$ be a partial action such that $\Gamma_{\theta} \cong \Gamma$, identify $\Gamma$ with $\Gamma_{\theta}$. Define $\hat{\theta} := \big( G, Y, \{ Y_{g} \}, \{ \hat{\theta}_{g} \} \big)$ of $G$ on $Y$ such that:
\begin{enumerate}[(i)]
    \item $Y_{g} := \mu^{-1} \big( X_{g} \big)$,
    \item $\hat{\theta}_{g}(y) := \Lambda \big( g_{\mu(y)},  y \big)$.
\end{enumerate}
    Then $\hat{\theta}$ is a partial group action of $G$ on $Y$. Furthermore, $\mu$ determines a morphism of partial actions $(\mu, id_{G}): \hat{\theta} \to \theta$.
\end{proposition}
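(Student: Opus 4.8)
The plan is to verify directly that $\hat\theta$ satisfies the four axioms of a partial action, and then that $(\mu,\mathrm{id}_G)$ satisfies the two conditions in the definition of a morphism of partial actions. Throughout, the main tool will be the dictionary between the axioms of the groupoid action $\Lambda$ in Definition~\ref{d: groupoid action} and the concrete structure of $\Gamma_\theta \cong \Gamma$: the element $g_x = (\theta_g(x), g, x)$ has source $x$ and target $\theta_g(x)$, it lies in $\Gamma_\theta$ exactly when $x \in \dom\theta_g = X_{g^{-1}}$, the identities are $1_x = (x,1,x)$, and composition obeys \eqref{eq: composition rule simplified partial action groupoid}.

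First I would pin down the domain and codomain of $\hat\theta_g$. For $\hat\theta_g(y) = \Lambda(g_{\mu(y)}, y)$ to make sense one needs $g_{\mu(y)} \in \Gamma_\theta$ and $(g_{\mu(y)}, y) \in \Gamma \circ Y$; the former holds iff $\mu(y) \in X_{g^{-1}}$, i.e. $y \in Y_{g^{-1}}$, and the latter is automatic since $\operatorname{s}(g_{\mu(y)}) = \mu(y)$. Applying condition (i) of Definition~\ref{d: groupoid action} gives $\mu(\hat\theta_g(y)) = \operatorname{t}(g_{\mu(y)}) = \theta_g(\mu(y)) \in X_g$, so $\hat\theta_g(Y_{g^{-1}}) \subseteq Y_g$. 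This same computation is precisely condition (ii) of a morphism for $(\mu,\mathrm{id}_G)$, while condition (i) of a morphism, $\mu(Y_g) \subseteq X_g$, is immediate from $Y_g = \mu^{-1}(X_g)$; thus the second assertion will follow at once (the underlying set map being $\mu \colon Y \to X$). The axiom $Y_1 = Y$ holds because $X_1 = X$, and $\hat\theta_1 = 1_Y$ follows from condition (ii) of Definition~\ref{d: groupoid action}, since $1_{\mu(y)}$ is the identity at $\mu(y)$.

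Next I would establish the inclusion $\hat\theta_g \hat\theta_h \subseteq \hat\theta_{gh}$, which packages the remaining axioms (ii) and (iii) of a partial action. For $y \in \dom(\hat\theta_g\hat\theta_h)$, so that $y \in Y_{h^{-1}}$ and $\hat\theta_h(y) \in Y_{g^{-1}}$, unwinding the definitions and using $\mu(\hat\theta_h(y)) = \theta_h(\mu(y))$ from the previous step gives $\hat\theta_g(\hat\theta_h(y)) = \Lambda_{g_{\theta_h(\mu(y))}}\big(\Lambda_{h_{\mu(y)}}(y)\big)$. The two groupoid elements $g_{\theta_h(\mu(y))}$ and $h_{\mu(y)}$ satisfy $\operatorname{s}(g_{\theta_h(\mu(y))}) = \theta_h(\mu(y)) = \operatorname{t}(h_{\mu(y)})$ and compose, by \eqref{eq: composition rule simplified partial action groupoid}, to $(gh)_{\mu(y)}$; hence condition (iii) of Definition~\ref{d: groupoid action} yields $\hat\theta_g(\hat\theta_h(y)) = \Lambda_{(gh)_{\mu(y)}}(y) = \hat\theta_{gh}(y)$. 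I would then deduce bijectivity of each $\hat\theta_g \colon Y_{g^{-1}} \to Y_g$ by specializing this inclusion to $h = g^{-1}$ (and with $g$, $g^{-1}$ interchanged): one obtains $\hat\theta_{g^{-1}}\hat\theta_g \subseteq \hat\theta_1 = 1_Y$ and $\hat\theta_g\hat\theta_{g^{-1}} \subseteq 1_Y$ with matching domains, so $\hat\theta_{g^{-1}}$ is a two-sided inverse.

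The only delicate point I anticipate is the domain bookkeeping, specifically confirming that the composite $(gh)_{\mu(y)}$ genuinely lies in $\Gamma_\theta$, equivalently that $\mu(y) \in \dom\theta_{gh}$. This is exactly where the partial action axiom of $\theta$ enters: from $\mu(y) \in \dom\theta_h$ and $\theta_h(\mu(y)) \in \dom\theta_g$ one gets $\mu(y) \in \dom(\theta_g\theta_h) \subseteq \dom\theta_{gh}$, which both legitimizes the composition in $\Gamma_\theta$ and shows $y \in Y_{(gh)^{-1}} = \dom\hat\theta_{gh}$, so that $\hat\theta_g\hat\theta_h \subseteq \hat\theta_{gh}$ holds at the level of domains as well as values. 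Everything else reduces to transcribing the three axioms of $\Lambda$ through the source, target, and composition dictionary of $\Gamma_\theta$.
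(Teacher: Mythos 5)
Your proof is correct and follows essentially the same route as the paper's: both verify the partial action axioms for $\hat{\theta}$ by translating conditions (i)--(iii) of Definition~\ref{d: groupoid action} through the source/target/composition structure of $\Gamma_{\theta}$ (in particular using $g_{\theta_h(\mu(y))}h_{\mu(y)} = (gh)_{\mu(y)}$ for the associativity axiom), and both obtain the morphism property from the identity $\mu(\hat{\theta}_g(y)) = \theta_g(\mu(y))$. The only substantive difference is that you package axioms (ii) and (iii) into the single inclusion $\hat{\theta}_g\hat{\theta}_h \subseteq \hat{\theta}_{gh}$ and explicitly check that each $\hat{\theta}_g$ is a bijection by exhibiting $\hat{\theta}_{g^{-1}}$ as a two-sided inverse, a point the paper's proof leaves implicit.
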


\begin{proof}
    First notice that if $y \in Y_{g^{-1}}$, then $\Lambda(g_{\mu(y)}, y)$ is well-defined since $\operatorname{s}(g_{\mu(y)}) = \mu(y)$. Furthermore, by $(i)$ of Definition~\ref{d: groupoid action} we have that
    \[
        \mu(\hat{\theta}_{g}(y)) = \mu(\Lambda(g_{\mu}(y), y)) = \theta_{g}(\mu(y)) \in X_{g}. 
    \]
    Thus, $\hat{\theta}_{g}(Y_{g^{-1}}) \subseteq Y_{g}$ and the maps $\hat{\theta}_{g}: Y_{g^{-1}} \to Y_{g}$ are well-defined.
    
    Observe that $Y_{1} = \mu^{-1}(X) = Y$, and by $(i)$ of Definition~\ref{d: groupoid action} we have that
    \[
        \hat{\theta}(y) = \Lambda(1_{\mu(y)}, y) = y, \text{ for all } y \in Y.
    \]
    Let $y \in Y_{g^{-1}} \cap Y_{h}$, therefore $\mu(y) \in X_{g^{-1}} \cap X_{h}$, thus 
    \[
        \operatorname{t}(g_{\mu(y)})= \operatorname{t}(\theta_{g}(\mu(y)), g, \mu(y)) = \theta_{g}(\mu(y)) \in X_{gh}.
    \]
    Then, by $(i)$ of Definition~\ref{d: groupoid action} we have that
    \[
        \hat{\theta}_{g}(y) = \Lambda(g_{\mu(y)},y) \in \mu^{-1}(X_{gh}) = Y_{gh}.
    \]
    Therefore, $\hat{\theta}_{g}(Y_{g^{-1}} \cap Y_{h}) \subseteq Y_{h}$. For the last a condition of partial action take any $y \in \hat{\theta}_{h}^{-1}(Y_{g^{-1}} \cap Y_{h})$ we have that
    \begin{align*}
        \hat{\theta}_{g} \hat{\theta}_{h}(y) 
        &=  \hat{\theta}_{g} \big( \Lambda(h_{\mu(y)}, y) \big) \\
        &=  \Lambda\big( g_{\mu(\Lambda(h_{\mu(y)}, y))}, \Lambda(h_{\mu(y)}, y) \big) \\
        &=  \Lambda\big( g_{\operatorname{t}(h_{\mu(y)})}, \Lambda(h_{\mu(y)}, y) \big) \\
        (\flat)  &=  \Lambda\big( g_{\theta_{h}(\mu(y))}, \Lambda(h_{\mu(y)}, y) \big) \\
        &=  \Lambda\big( g_{\theta_{h}(\mu(y))}h_{\mu(y)}, y) \big) \\
        (\flat \flat) &=  \Lambda\big( (gh)_{\mu(y)}, y) \big) \\
        &=  \hat{\theta}_{gh}(y),
    \end{align*}
    where $(\flat)$ and $(\flat \flat)$ are due $(iii)$ of Definition~\ref{d: groupoid action} and \eqref{eq: composition rule simplified partial action groupoid} respectively. Finally, notice that $\mu$ determines a morphism of partial actions since  by definition $\mu(Y_{g}) \subseteq X_{g}$, and by $(i)$ of Definition~\ref{d: groupoid action} we have that
    \[
        \mu \big(\hat{\theta}_{g}(y) \big) = \mu\big( \Lambda(g_{\mu(y)}, y) \big) = \operatorname{t}(g_{\mu(y)}) = \theta_{g}(\mu(y)). 
    \]
\end{proof}

\begin{remark}
    Observe that in the conditions of Proposition~\ref{p: groupoids are partial actions} we have that $\mu: Y \to X$ is a \textit{strong} $G$-equivariant map, i.e., $\mu^{-1}(X_g) \subseteq Y_g$. Furthermore, the partial group actions $\theta$, $\hat{\theta}$ and the $G$-equivariant map $\mu$ contains all the information about the groupoid action $\Lambda$.
\end{remark}


Proposition~\ref{p: groupoids are partial actions} and Proposition~\ref{p: groupoids actions are partial actions} serve as a clear motivation to understand the relationship between the categories \textbf{PA} and \textbf{Grpd}. One may begin by asking if we can construct a functor from \textbf{Grpd} to \textbf{PA}. To accomplish this we need a canonical way to obtain a partial action $\theta^{\Gamma}$ form a groupoid $\Gamma$. We will show that for every groupoid $\Gamma {\rightrightarrows} X$ there exists a unique partial action $\theta^{\Gamma}$ (up to isomorphism) of a group $G_{\Gamma}$ on $X$ such that $\Psi(\theta^{\Gamma}) \cong \Gamma$ and that satisfies certain universal property. The main objective of this section is to prove the following theorem:
\begin{theorem} \label{t: groupoids are partial actions}
    There exists a functor $\Phi: \textbf{Grpd} \to \textbf{PA}$ such that $\Psi \Phi \cong 1_{\textbf{Grpd}}$. Furthermore, if $\Gamma$ is a groupoid, then $\Phi(\Gamma)$ is the unique (up isomorphism) partial action such that:
    \begin{enumerate}[(i)]
        \item there exists a natural isomorphism of groupoids $\eta_{\Gamma} : \Psi(\Phi(\Gamma)) \to \Gamma$, and 
        \item if $\theta$ is a partial action and $f: \Psi(\theta) \to \Gamma$ an isomorphism of groupoids, then there exists a unique morphism of partial actions $\phi: \Phi(\Gamma) \to \theta$ such that the following diagram commutes
            \[\begin{tikzcd}
                \Gamma & {\Psi(\theta)} \\
                {\Psi(\Phi(\Gamma)).}
                \arrow["f", from=1-1, to=1-2]
                \arrow["{\Psi(\phi)}"', from=2-1, to=1-2]
                \arrow["{\eta_{\Gamma}}", from=2-1, to=1-1]
            \end{tikzcd}\]
        In particular $\Psi(\theta): \Psi(\theta^{\Gamma}) \to \Psi(\theta)$ is an isomorphism of groupoids.
    \end{enumerate}
    Moreover, the functors $\Phi$ and $\Psi$ determine an adjunction between the categories \textbf{PA} and \textbf{Grpd}, i.e., for any partial action $\theta$ and any groupoid $\Gamma$ there exists a natural isomorphism
    \[
        \hom(\Phi(\Gamma), \theta) \cong \hom(\Gamma, \Psi(\theta)).
    \]
\end{theorem}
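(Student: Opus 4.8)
The plan is to let $G_{\Gamma}$ be the \emph{universal group} of the groupoid $\Gamma \rightrightarrows X$, namely the group presented by generators the morphisms of $\Gamma$ subject to the relations $1_{x}=e$ for every object $x$ and $\alpha\beta=\alpha\cdot\beta$ whenever $\operatorname{s}(\alpha)=\operatorname{t}(\beta)$. It carries a canonical functor $q:\Gamma\to G_{\Gamma}$ (into the one-object groupoid $G_{\Gamma}$) with the universal property that every functor $\Gamma\to H$ into a group factors uniquely through $q$; in particular $q(\Gamma)$ generates $G_{\Gamma}$. Writing $X$ for the set of objects, I would then define $\theta^{\Gamma}=\Phi(\Gamma)$ to be the partial action of $G_{\Gamma}$ on $X$ given, for $g\in G_{\Gamma}$, by
\[
    X_{g}:=\{\operatorname{t}(\gamma):\gamma\in\Gamma,\ q(\gamma)=g\},\qquad \theta_{g}(\operatorname{s}(\gamma)):=\operatorname{t}(\gamma)\ \text{ whenever } q(\gamma)=g,
\]
so that the domains are exactly the $\operatorname{t}$-images (resp. $\operatorname{s}$-images) of the $q$-fibers.

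The crucial point, on which everything else rests, is that $q$ is \emph{faithful} and \emph{reflects identities}: it is injective on each hom-set $\Gamma(x,y)$ and $q^{-1}(e)$ consists exactly of the identity morphisms. I would establish both at once by exhibiting an explicit faithful functor $J:\Gamma\to K$ with $J^{-1}(e)=\{1_{x}\}$ and invoking the factorization $J=\overline{J}\circ q$: since $J$ is injective on hom-sets so is $q$, and $q(\gamma)=e$ forces $J(\gamma)=e$, whence $\gamma$ is an identity. To build $J$, work one connected component $\Gamma_{i}\rightrightarrows X_{i}$ at a time: fix a base object, choose arrows $\tau_{z}:x_{i}\to z$ with $\tau_{x_{i}}=1$, write each $\gamma:z\to w$ uniquely as $\tau_{w}h\tau_{z}^{-1}$ with $h$ in the vertex group $H_{i}$, and set $J(\gamma):=a_{w}\,h\,a_{z}^{-1}$ inside $H_{i}\ast\mathbb{F}_{i}$, where $\mathbb{F}_{i}$ is free on new generators $a_{z}$ ($z\in X_{i}$); the free-product normal form makes this a faithful functor with the stated kernel, and one takes the free product of the $H_{i}\ast\mathbb{F}_{i}$ over the components. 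Granting this, $\theta_{g}$ is a well-defined bijection $X_{g^{-1}}\to X_{g}$ (if $\operatorname{s}(\gamma)=\operatorname{s}(\gamma')$ and $q(\gamma)=q(\gamma')$ then $\gamma'\gamma^{-1}$ lies over $e$, hence is an identity and $\gamma=\gamma'$), $X_{1}=X$ with $\theta_{1}=1_{X}$, and the inclusion $\theta_{g}\theta_{h}\subseteq\theta_{gh}$ (the reformulation of axioms (ii),(iii)) follows by composing the two witnessing arrows in $\Gamma$. Finally $\eta_{\Gamma}:\Psi(\theta^{\Gamma})\to\Gamma$, $(\operatorname{t}(\gamma),g,\operatorname{s}(\gamma))\mapsto\gamma$, is a bijection on morphisms preserving $\operatorname{s},\operatorname{t}$ and, by the composition rule \eqref{eq: composition rule simplified partial action groupoid}, a groupoid isomorphism, which gives (i).

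For functoriality, a functor $F:\Gamma\to\Gamma'$ induces $G_{F}:G_{\Gamma}\to G_{\Gamma'}$ by the universal property (so $q'\circ F=G_{F}\circ q$) and acts on objects by $F_{0}$; I would set $\Phi(F):=(F_{0},G_{F})$ and verify the two morphism conditions directly from $X_{g}=\operatorname{t}(q^{-1}(g))$, functoriality of $\Phi$ being inherited from that of the universal group and of the object functor. Naturality of $\eta$, i.e. $\eta_{\Gamma'}\circ\Psi\Phi(F)=F\circ\eta_{\Gamma}$, is immediate on a morphism $(\operatorname{t}(\gamma),g,\operatorname{s}(\gamma))$ since both sides return $F(\gamma)$; hence $\Psi\Phi\cong 1_{\textbf{Grpd}}$. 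For the universal property (ii), given $\theta=(H,Y,\{Y_{h}\},\{\theta_{h}\})$ and an isomorphism $f:\Gamma\to\Psi(\theta)=\Gamma_{\theta}$, observe that $p:\Gamma_{\theta}\to H$, $h_{y}\mapsto h$, is a functor by \eqref{eq: composition rule simplified partial action groupoid}; I would take $\phi_{1}:G_{\Gamma}\to H$ to be the factorization of $p\circ f$ through $q$ and $\phi_{0}:=f_{0}$ on objects. A short check shows $\phi=(\phi_{0},\phi_{1})$ is a morphism of partial actions with $\Psi(\phi)=f\circ\eta_{\Gamma}$, and uniqueness holds because $\Psi(\phi)$ pins down $\phi_{0}$ (on the identity morphisms) and $\phi_{1}$ on $q(\Gamma)$, which generates $G_{\Gamma}$, via Lemma~\ref{l: equality of partial actions}. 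Uniqueness of $\theta^{\Gamma}$ up to isomorphism is then the usual formal consequence of the universal property (i)--(ii).

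The main obstacle is the faithfulness-and-identity-reflection of $q$ in the second paragraph: without it $\theta^{\Gamma}$ is neither well defined nor does its partial action groupoid recover $\Gamma$, and this is the one step that genuinely uses the internal structure of $\Gamma$ (through the chosen arrows $\tau_{z}$ and the free-product normal form) rather than the formal universal property of $G_{\Gamma}$. Everything afterwards is bookkeeping with the universal property of the universal group together with the explicit description $X_{g}=\operatorname{t}(q^{-1}(g))$ of the domains.
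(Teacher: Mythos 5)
Your construction produces the same object as the paper --- your universal group presented by $\langle \gamma : 1_x = e,\ \alpha\beta = \alpha\cdot\beta\rangle$ is exactly the paper's $G_\Gamma = F/N_\Gamma$ (cf.\ the presentation proposition following Lemma~\ref{l: unique representations of elements in G Gamma}), and your domains $X_g = \operatorname{t}(q^{-1}(g))$ agree with the paper's $X^\Gamma_g = \{\operatorname{t}(\psi(g))\}$ once injectivity is known --- and your treatment of functoriality and of the universal property (factoring the functor $p\circ f:\Gamma\to H$ through $q$, with uniqueness from the fact that $q(\Gamma)$ generates $G_\Gamma$) is essentially the paper's. Where you genuinely diverge is in the one hard step, namely that $q:\Gamma\to G_\Gamma$ is faithful and reflects identities. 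The paper proves this by working entirely inside the free group $F$: it analyzes the normal closure $N_\Gamma$ of the loops via path representations, minimal and reduced minimal representations, the dichotomy of \textbf{P}-pairs versus \textbf{C}-pairs, and a switching algorithm (Lemmas~\ref{l: P-property implies loops}--\ref{l: basic switch} and Proposition~\ref{p: every normal word has P-representation}), concluding that every nontrivial element of $N_\Gamma$ with nonempty image is a loop, whence Proposition~\ref{p: only one path by class}. You instead exhibit an explicit faithful, identity-reflecting functor $J:\Gamma\to K$ by trivializing each connected component over a base object and mapping into $H_i \ast \mathbb{F}_i$, then pull faithfulness back along the factorization $J=\overline{J}\circ q$. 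Both arguments are correct; yours is considerably shorter and leans on the classical normal form in a free product rather than a bespoke rewriting system, while the paper's combinatorial machinery has the side benefit of yielding the explicit normal form for elements of $G_\Gamma$ (Lemma~\ref{l: unique representations of elements in G Gamma}), which is reused in Section~3 to classify the partial actions with a given groupoid; if you adopted your route you would need to recover that normal form separately (e.g.\ again via the free-product normal form in $K$).
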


\subsection{The universal partial action group of a groupoid}

\underline{From now on}, we fix a groupoid $\Gamma {\rightrightarrows} X$. 

\begin{definition} \label{d: universal partial action group of a groupoid}
    We define the \textbf{universal partial action group} of $\Gamma \rightrightarrows X$ as the group $G_\Gamma$ generated by the set of symbols $\{ \lceil \gamma \rceil \}_{\gamma \in \Gamma}$ subject to the relations:
    \begin{enumerate}[(i)]
        \item $\lceil 1_x \rceil = 1$ for all $x \in X$;
        \item $\lceil \alpha \rceil \lceil \beta \rceil = \lceil \alpha \beta \rceil$ for all $(\alpha, \beta) \in \Gamma^{(2)}$.
    \end{enumerate}
\end{definition}
 
The objective of this section is to characterize some important properties of $G_{\Gamma}$ that will allow us to construct the universal partial group action of a groupoid.

Set $F$ as the free group whose basis is the set of symbols $\{ \lfloor \alpha \rfloor: \alpha \in \Gamma \}$. 
Consider the following set $\Gamma \cup \{ 1_{X}, \emptyset \}$, where $\emptyset$ denotes the empty function $\emptyset: \varnothing \to \varnothing$. Then, $\Gamma \cup \{ 1_X, \emptyset \}$ forms an inverse monoid (for more details on inverse semigroups, see \cite{lawson1998inverse}) with the product defined as follows:
\begin{equation*}
   1_X w = w 1_X = w \text{ for all } w \in \Gamma \cup \{ 1_X, \emptyset \}
\end{equation*}
\begin{equation*}
   \emptyset w = w \emptyset = \emptyset \text{ for all } w \in \Gamma \cup \{ 1_X, \emptyset \}
\end{equation*}
\begin{equation*}
    \alpha \beta = \left\{\begin{matrix}
       \alpha \beta \in \Gamma & \text{ if } \operatorname{s}(\alpha) = \operatorname{t}(\beta), \\ 
      \emptyset & \text{otherwise,} 
      \end{matrix}\right.
\end{equation*}

For any set of elements $\{ \alpha_{i} \}_{i = 1}^{n}$ of $\Gamma$, if the product $\alpha_{1} \alpha_{2} \ldots \alpha_{n}$ does not exist in $\Gamma$, we write $\alpha_{1} \alpha_{2} \ldots \alpha_{n} = \emptyset$, i.e., we say that the product of no-composable elements of $\Gamma$ is $\emptyset$. 

Now we define the function $\pi: F \to \Gamma \cup \{ 1_{X}, \emptyset \}$ such that \begin{equation*}
    \pi(w) = \left\{\begin{matrix}
 1_X & \text{ if } w = 1_F, \\ 
  \alpha_{1}^{i_{1}} \alpha_{2}^{i_{2}} \ldots \alpha_{n}^{i_{n}} & \text{otherwise,} 
\end{matrix}\right.
\end{equation*}
where 
\[
   w = \lfloor \alpha_{1} \rfloor^{i_{1}} \ldots \lfloor \alpha_{n} \rfloor^{i_{n}}, \text{ with } i_{j} \in \{ 1, -1 \},
\]
is the reduced word form of $w \in F$. 

Note that the elements of the form
\begin{equation} \label{eq: basis elements of F}
    \lfloor \alpha \rfloor^{i} \text{ such that } \alpha \in \Gamma \text{ and } i \in \{ 1,-1\} 
 \end{equation}
generates $F$ as a group.
 \begin{remark} \label{r: easy remark}
    Notice that given any pair of elements $a,b$ of the form \eqref{eq: basis elements of F}, we have that if $ab=1_F$, then $\pi(a) \pi(b) = 1_{\operatorname{s}(\pi(b))}$.
 \end{remark}

 Let $a, b \in \Gamma \cup \{ 1_X, \emptyset \}$ we say that
 \begin{align} \label{eq: orden in groupoid semigroup}
    a \leq b 
     &\Leftrightarrow \text{ there exists } u \in \{ 1_x : x \in X \} \cup \{ 1_X, \emptyset \} \text{ such that } a = b u \\
     &\Leftrightarrow \text{ there exists } u \in \{ 1_x : x \in X \} \cup \{ 1_X, \emptyset \} \text{ such that } a = ub. \notag
 \end{align}
 The relation $\leq$ defines a partial order on the inverse monoid $\Gamma \cup \{ 1_X, \emptyset \}$, which is, in fact, the natural order of an inverse semigroup (see \cite{lawson1998inverse}).

 \begin{lemma} \label{l: pi is a partial representation}
  Let $w,v \in F$. Then, $\pi(w)\pi(v) \leq \pi(wv)$.
 \end{lemma}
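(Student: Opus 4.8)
The plan is to argue by induction on the total reduced length $|w| + |v|$, where $|\cdot|$ denotes the length of a reduced word in $F$. Throughout I will read each element of $\Gamma \cup \{1_X, \emptyset\}$ as a partial bijection of $X$: an arrow $\gamma$ becomes the partial bijection defined only at $\operatorname{s}(\gamma)$ sending $\operatorname{s}(\gamma) \mapsto \operatorname{t}(\gamma)$, the symbol $1_X$ becomes $\operatorname{id}_X$, and $\emptyset$ becomes the empty map. Under this identification the partial multiplication of $\Gamma$ becomes composition of partial bijections, the relation $\subseteq$ becomes inclusion of partial maps, composition is monotone for $\subseteq$, and $1_z \subseteq 1_X$ for every $z \in X$. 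With these conventions the assertion $\pi(w)\pi(v) \subseteq \pi(wv)$ is a genuine statement about partial bijections, and this is the bookkeeping that keeps the cancellation argument clean.

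For the base case, if $w = 1_F$ or $v = 1_F$ then $\pi(1_F) = 1_X = \operatorname{id}_X$ is a two-sided identity for composition and $1_F$ is a two-sided identity in $F$, so both sides coincide. For the inductive step write $w = \lfloor a_1 \rfloor^{\epsilon_1} \cdots \lfloor a_p \rfloor^{\epsilon_p}$ and $v = \lfloor b_1 \rfloor^{\delta_1} \cdots \lfloor b_q \rfloor^{\delta_q}$ in reduced form with $p, q \geq 1$, and distinguish two cases according to whether the letters $\lfloor a_p \rfloor^{\epsilon_p}$ and $\lfloor b_1 \rfloor^{\delta_1}$ cancel.

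If they do not cancel, then $wv$ is already reduced as the concatenation of the two words, so by the very definition of $\pi$ and associativity of the partial multiplication we get $\pi(wv) = \pi(w)\pi(v)$ (both are the product in $\Gamma$ of the same string of arrows, with $\emptyset$ absorbing), and the inclusion is an equality. If they do cancel, i.e. $a_p = b_1$ and $\epsilon_p = -\delta_1$, set $a := a_p$, $\epsilon := \epsilon_p$, and put $w' = \lfloor a_1 \rfloor^{\epsilon_1} \cdots \lfloor a_{p-1} \rfloor^{\epsilon_{p-1}}$ and $v' = \lfloor b_2 \rfloor^{\delta_2} \cdots \lfloor b_q \rfloor^{\delta_q}$, so that $w = w' \lfloor a \rfloor^{\epsilon}$, $v = \lfloor a \rfloor^{-\epsilon} v'$ and $wv = w'v'$ in $F$. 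Since $w$ and $v$ are reduced, $\pi(w) = \pi(w')\, a^{\epsilon}$ and $\pi(v) = a^{-\epsilon}\, \pi(v')$ straight from the definition of $\pi$. Assuming $\pi(w)\pi(v) \neq \emptyset$ (otherwise the inclusion is trivial), all arrows involved are composable, and using associativity together with Remark~\ref{r: easy remark} (which gives $a^{\epsilon} a^{-\epsilon} = 1_{\operatorname{t}(a^{\epsilon})}$) we obtain
\[
    \pi(w)\pi(v) = \pi(w')\, a^{\epsilon} a^{-\epsilon}\, \pi(v') = \pi(w')\, 1_{\operatorname{t}(a^{\epsilon})}\, \pi(v').
\]
Now $1_{\operatorname{t}(a^{\epsilon})} \subseteq 1_X$, so monotonicity of composition gives $\pi(w')\, 1_{\operatorname{t}(a^{\epsilon})}\, \pi(v') \subseteq \pi(w')\, 1_X\, \pi(v') = \pi(w')\pi(v')$, and the inductive hypothesis applied to the strictly shorter words $w', v'$ yields $\pi(w')\pi(v') \subseteq \pi(w'v') = \pi(wv)$. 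Chaining the inclusions gives $\pi(w)\pi(v) \subseteq \pi(wv)$, as desired; note that further cancellation between $w'$ and $v'$ is absorbed automatically by the recursion.

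The step I expect to be most delicate is precisely the passage $\pi(w')\, 1_{\operatorname{t}(a^{\epsilon})}\, \pi(v') \subseteq \pi(w')\pi(v')$. When $w'$ or $v'$ is empty this inclusion is \emph{strict}, since $\pi(w')$ or $\pi(v')$ is then the full identity $1_X$ while $1_{\operatorname{t}(a^{\epsilon})}$ is only a local identity; the extreme instance is $v = w^{-1}$, where $\pi(w)\pi(v) = 1_{\operatorname{t}(a^{\epsilon})} \subsetneq 1_X = \pi(wv)$. This is exactly why the statement is an inclusion rather than an equality, and why one must argue through monotonicity of composition and the relation $1_z \subseteq 1_X$ instead of merely substituting equal arrows; committing to the partial-bijection reading at the outset is what makes this transparent.
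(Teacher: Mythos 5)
Your proof is correct and takes essentially the same approach as the paper: the paper cancels all $l$ cancelling pairs at once, noting via Remark~\ref{r: easy remark} that each collapsed pair contributes a local identity contained in $1_X$, whereas you peel off one cancelling pair at a time by induction on length. The key observation --- that $a^{\epsilon}a^{-\epsilon}$ yields $1_{\operatorname{t}(a^{\epsilon})} \subseteq 1_X$ and composition of partial maps is monotone for $\subseteq$ --- is identical, and your explicit treatment of the full-cancellation edge case ($v = w^{-1}$) is a welcome clarification of a point the paper leaves implicit.
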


 \begin{proof}
  Let $w = a_1 \ldots a_n$, and $v = b_1 \ldots b_m$, be elements of $F$ in their reduced form, where $a_i$ and $b_i$ are of the form \eqref{eq: basis elements of F}. Then, there exists $0 \leq l \leq \min(n,m)$ such that
  \[ 
    wv = a_1 \ldots a_{n-l} b_{l+1} \ldots b_m
  \]
  is the reduced form of $wv.$ Thus, by Remark \ref{r: easy remark} we obtain
  \begin{align*}
    \pi(w) \pi(v) &= \alpha_1\alpha_2 \ldots \alpha_n \beta_1 \beta_2 \dots \beta_m \\ 
                  & \leq \alpha_1\alpha_2 \ldots \alpha_{n-l} \beta_{l+1}\beta_2 \dots \beta_m \\
                  &= \pi(wv),
  \end{align*}
  where $\pi(a_i)=\alpha_i$ and $\pi(b_j)=\beta_j$.
 \end{proof}

 An element $A \in F$ such that $\pi(A) \neq \emptyset$ will be called a \textbf{path} of $F$. Given a path $A$ we define
 \[
    S(A):= \operatorname{s}(\pi(A)) \text{ and } T(A):=\operatorname{t}(\pi(A)).
 \]
 
 \begin{lemma} \label{l: product of paths target and source}
   Let $A$ and $B$ be two paths, then $AB$ is a path if, and only if, $S(A)=T(B)$. Furthermore, if $AB$ is a path and $AB \neq 1_F$, then $T(AB)=T(A)$ and $S(AB)=S(B)$. 
 \end{lemma}
 \begin{proof}
  Suppose that $A = a_1 \ldots a_n$ and $B=b_1 \ldots b_m$ are reduced representations. Then, the first part is clear, since $\pi(AB)\neq \emptyset$ if, and only if, $\pi(a_n)\pi(b_1)\neq \emptyset$. The second part is a consequence of Lemma~\ref{l: pi is a partial representation} that gives the equation:
   \[
    \emptyset \neq  \alpha_1 \ldots \alpha_n \beta_1 \dots \beta_m = \pi(A)\pi(B)=\pi(AB) \neq 1_F.
   \]
 \end{proof}

 If $A$ and $B$ are paths we say that $A$ is \textbf{unlinked} to $B$ if $\pi(AB) = \emptyset$, or equivalently $\operatorname{S}(A) \neq \operatorname{T}(B)$. 

\textbf{Notation:} If $A$ is unlinked to $B$, we write $A\dagger B$; otherwise, if $AB$ forms a path, we write $\underline{AB}$.

 The \textbf{path representation} of an element $w \in F$ is a product of consecutive unlinked paths $A_1, A_2, \ldots, A_n$ ($A_i\dagger A_{i+1}$, for $i = 1, \ldots, n-1$) such that $w = A_1A_2\ldots A_n$. In other words, the path representation of an element $w$ is the ordered product of the largest paths of the reduced form of $w$. It is easy to see that such decomposition always exists and is unique (this is a consequence of the uniqueness of the reduced form of a word). Furthermore, the image via $\pi$ of an element $w \in F$ whose path representation consists of more than one path is the empty map. A word $w$ of $F$ will be called a \textbf{loop} if $\pi(w)$ is an identity map of $\Gamma$. 

 We define $N_\Gamma$ as the normal subgroup of $F$ generated by the set of loops, i.e., $N_\Gamma$ is generated by the set:
 \[
    \{ zwz^{-1} : z \in F \text{ and }\pi(w) \text{ is an identity morphism of } \Gamma \}.
 \]

\begin{proposition} \label{p: presentation of universal groupoid}
    The map $\overline{\lfloor \gamma \rfloor} \in F/N_{\Gamma} \to \lceil \gamma \rceil \in G_\Gamma$ is an isomorphism of groups.
\end{proposition}
\begin{proof}
    Consider the group morphism $f: F \to G_{\Gamma}$ determined by the function 
    \[
       \gamma \in \Gamma \mapsto \lceil \gamma \rceil \in G_{\Gamma}
    \]
    
    Let $\lfloor \gamma_{1} \rfloor \lfloor \gamma_{2} \rfloor \ldots \lfloor \gamma_{n} \rfloor$ be a loop of $F$, then
    \[
        f\big( \lfloor \gamma_{1} \rfloor \lfloor \gamma_{2} \rfloor \ldots \lfloor \gamma_{n} \rfloor \big) = \lceil \gamma_{1} \rceil \lceil \gamma_{2} \rceil \lceil \gamma_{n} \rceil = \lceil \gamma_{1} \ldots \gamma_{n} \rceil = \lceil 1_{\operatorname{s}(\gamma_{n})} \rceil = 1_{G_{\Gamma}}.
    \]
    Therefore, we obtain a surjective morphism of groups $\overline{f}: F/N_{\Gamma} \to G_{\Gamma}$ such that $\overline{f}(\overline{\lfloor \gamma \rfloor}) = \lceil \gamma \rceil$ for all $\gamma \in \Gamma$. Conversely, the map $f': G_\Gamma \to F/N_\Gamma$ defined by $f'(\lceil \gamma \rceil) := \overline{\lfloor \gamma \rfloor}$ is a well-defined morphism of groups since $\lfloor \gamma \rfloor \lfloor \rho \rfloor \lfloor \gamma \rho \rfloor^{-1}$ forms a loop if $\operatorname{s}(\gamma) = \operatorname{t}(\rho)$. Furthermore, it is clear that $f'$ and $\overline{f}$ are mutually inverses.
\end{proof}

In view of Proposition~\ref{p: presentation of universal groupoid} we will identify $G_{\Gamma}$ with $F/ N_\Gamma$. A \textbf{minimal element} of $N_\Gamma$ is a basic generator $z \dagger w \dagger z^{-1}$ of $N_\Gamma$ such that the path representation of $z$ has no loops, i.e., the only loop in $zwz^{-1}$ is $w$.

 \begin{lemma} \label{l: minimal decomposition}
    The normal subgroup $N_\Gamma$ is generated by the set of minimal elements of $N_\Gamma$.
 \end{lemma}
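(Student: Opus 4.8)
The plan is to let $H \leq F$ denote the subgroup generated by all minimal elements of $N_\Gamma$. Since every minimal element is in particular a basic generator $zwz^{-1}$ of $N_\Gamma$, we have $H \subseteq N_\Gamma$, so the whole content of the lemma is the reverse inclusion $N_\Gamma \subseteq H$. As $N_\Gamma$ is generated by the basic generators $zwz^{-1}$ (with $w$ a loop and $z \in F$), it suffices to prove that every such $zwz^{-1}$ lies in $H$.

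I would prove this by induction on the word length $|z|$ of the conjugator. If the path representation $z = A_1 \cdots A_n$ contains no loops, then $zwz^{-1}$ is by definition a minimal element, hence in $H$; this covers the base case $z = 1_F$ and, more generally, every loop-free conjugator. If instead some $A_j$ is a loop, let $L := A_j$ be the first such path and write $z = uLv$ with $u = A_1 \cdots A_{j-1}$ (which is loop-free by minimality of $j$) and $v = A_{j+1} \cdots A_n$. The key is the elementary identity in the free group
\[
    zwz^{-1} = (uLu^{-1})\,\big((uv)\,w\,(uv)^{-1}\big)\,(uLu^{-1})^{-1},
\]
which one checks by cancelling the inner $u^{-1}u$ factors. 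The first factor $uLu^{-1}$ is a conjugate of the loop $L$ whose conjugator $u$ has no loops in its path representation, so it is a minimal element and lies in $H$. The second factor $(uv)\,w\,(uv)^{-1}$ is again a basic generator, and its conjugator satisfies $|uv| \leq |u| + |v| = |z| - |L| < |z|$; hence by the induction hypothesis it lies in $H$. Since $H$ is a subgroup, and therefore closed under products and conjugation, we conclude $zwz^{-1} \in H$, completing the induction.

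The main obstacle is conceptual rather than computational. A naive attempt to rewrite $zwz^{-1}$ as an \emph{explicit} product of minimal elements fails: conjugating a minimal element by $uLu^{-1}$ reinserts the loop $L$ into the conjugator and destroys minimality, so the process does not terminate. This is circumvented by working with the subgroup $H$ and only tracking membership, which makes the conjugation in the displayed identity harmless. The second subtlety I expect is that deleting $L$ may cause cancellation or merging of the adjacent maximal paths $A_{j-1}$ and $A_{j+1}$ inside $uv$ — possibly even producing a \emph{new} loop — so the number of loops in the path representation need not strictly decrease. This is precisely why the induction is set up on the word length $|z|$, which is always strictly reduced by removing the nonempty subword $L$, rather than on the loop count.
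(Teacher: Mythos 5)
Your proof is correct and follows essentially the same route as the paper: the displayed identity $zwz^{-1}=(uLu^{-1})\big((uv)w(uv)^{-1}\big)(uLu^{-1})^{-1}$ is exactly the paper's factorization after extracting the first loop $A_r$ from the path representation of $z$, the paper then saying ``repeat this algorithm'' where you recurse. Your only addition is to make the termination explicit by inducting on the word length $|z|$ rather than on the loop count --- a worthwhile precision, since (as you note) deleting $L$ can merge or cancel the adjacent paths in $uv$ --- but it does not change the substance of the argument.
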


 \begin{proof}
    It is enough to show that each basic generator of $N_\Gamma$ is a product of minimal elements. Let $z w z^{-1}$ be a basic element of $N_\Gamma$ and let $z = A_1 \ldots A_n$ be the path representation of $z$. Take $r:= \min\left\{ 1 \leq i \leq n : A_i \text{ is a loop} \right\}$, if such $r$ does not exist we are done. Otherwise, due to the minimality of $r$ the product of paths $A_1 \ldots A_{r-1} \mathbf{A_r} A_r^{-1} \dots A_1^{-1}$ is a minimal element of $N_\Gamma$. Therefore, we have that
    \[
      z w  z^{-1} =  (A_1 \dots A_{r-1} \mathbf{A_r} A_{r-1}^{-1} \dots A_1^{-1}) z_0 w z_0^{-1} (A_1 \dots A_{r-1} \mathbf{A_r}^{-1} A_{r-1}^{-1} \dots A_1^{-1}),
    \]
    where $z_0 = A_1 \dots A_{r-1} A_{r+1} \ldots A_n$. Thus, we can repeat this algorithm with $z_0 w z_0^{-1}$ until we obtain the desired decomposition.
 \end{proof}

 \begin{definition}
  A \textbf{minimal representation} of $w \in N_\Gamma$ is a tuple $(\xi_0, \xi_1, \ldots, \xi_n)$ of minimal elements of $N_\Gamma$ such that $w=\xi_0 \xi_1 \dots \xi_n.$ We say that a minimal representation $(\xi_i)_{i=0}^n$ of $w$ is \textbf{reduced} if
   \begin{enumerate}[(i)]
     \item $\xi_i \ldots \xi_j \neq 1_F$ for all $0 \leq i \leq j \leq n$;
     \item if $\xi_i = zuz^{-1}$ and $\xi_{i+1}= z v z^{-1}$, where $u$ and $v$ are loops, then $u \dagger v$.
   \end{enumerate}
 	\label{d: minimal representation}
 \end{definition}
The first condition is clear since we can erase the cancelations from the minimal representation, and it still will be a minimal representation. The second condition is because if $\underline{uv}$, then $\xi_i \xi_{i+1}= z \underline{uv} z^{-1}$ is a minimal element of $N_\Gamma$ since $\underline{uv}$ is a loop. Hence, $(\xi_0, \dots, \xi_i \xi_{i+1}, \dots, \xi_n)$ is a shorter minimal representation of $w$. Furthermore, we can set $\mathcal{R}$ as the function that maps no-reduced minimal representations to reduced minimal representations.

For a word $w \in F$ with path representation $ w = A_1 \ldots A_n$ we define $\mathcal{L}(w)=A_m$, where $m = \max \left\{ i : A_i \text{ is a loop} \right\}$, i.e., if $\mathcal{L}(w)$ there exists, then it is the last (from left to right) loop in the path representation of $w$.

 An easy observation about paths and loops is that the product of a no-loop path and a loop cannot be the identity. Moreover, if $u$ is a loop, $B$ is a no-loop path and $\pi(uB) \neq \emptyset$, then by Lemma \ref{l: product of paths target and source} we have that $\operatorname{T}(uB)=\operatorname{T}(u)$ and $\operatorname{S(uB)}=\operatorname{S}(B)$. 
 Now consider two minimal elements $\xi= zuz^{-1}$ and $\phi= yvy^{-1}$ such that $(\xi, \phi)$ is reduced. We classify the pair $(\xi, \phi)$ in two possible cases with respect to $\mathcal{L}(\xi \phi)$:
 \begin{itemize}
   \item We call $(\xi, \phi)$ a \textbf{P}-pair if $\mathcal{L}(\xi \phi) = \mathcal{L}(\phi)=v$.    
   \item We say that $(\xi, \phi)$ is a \textbf{C}-pair when $\mathcal{L}(\xi \phi) \neq v$.
 \end{itemize}
 Let $z = A_1 \dots A_n$ and $y = B_1 \ldots B_m$ be path representations. Then, the product $\xi \phi$ has the following behaviors:
 \begin{enumerate}[(i)]
   \item The first case happens when there is \textbf{no complete cancelation} of $z^{-1}$ or $y$ in the product $\xi \phi$. That is, there exists some $k \leq \min(n,m)$ such that
   \[
      \xi \phi = z u \dagger (A_n^{-1} \dots A_k^{-1} B_k \dots B_m) \dagger v y \text{ is reduced and } A_{k}^{^{-1}} B_{k} \neq 1_{F}.
   \]
   In the case where $A_k^{-1} \dagger B_k$, both of the above unlinks are justified. If $\underline{A_k^{-1} B_k}$, we must be careful when $k=n$ or $k=m$, but in both cases, the unlinks are justified by Lemma~\ref{l: product of paths target and source} and the fact that $u \dagger A_n^{-1}$ and $B_m \dagger v$. Thus, we have a \textbf{P}-pair.

   \item If $z = y$, then
   \[
      \xi \phi = z u \dagger v z^{-1}.
   \]
   Because, $(\xi, \phi)$ is reduced. In this case we have a \textbf{P}-pair.
   \item If $n < m$ and $A_i = B_i$ for $1 \leq i \leq n$, then
   \[
      \xi \phi = z \dagger (u B_{n+1} \dots B_m) \dagger v y^{-1}.
   \]
   Notice that may happen that $\underline{u B_{n+1}}$, in this case by Lemma~\ref{l: product of paths target and source} we have that $\operatorname{T}(u B_{n+1}) = \operatorname{T}(u)$, what justifies the left unlink of paths, the right unlink of paths is clear if $m > n+1$, and for the case $m = n+1$ we guarantee using again Lemma~\ref{l: product of paths target and source}. Thus, this case corresponds to a \textbf{P}-pair.
   \item If $n > m$ and $A_i = B_i$ for $1 \leq i \leq m$ and $A_{m+1} \dagger v$, then
   \[
      \xi \phi = z u \dagger A_n^{-1} \dots A_{m+1}^{-1} \dagger v y. 
   \]
   This case is also a \textbf{P}-pair.
   \item If $n > m$ and $A_i = B_i$ for $1 \leq i \leq m$ and $\underline{A_{m+1} v}$, then
   \begin{equation} \label{eq: C-pair}
      \xi \phi = z u \dagger A_n^{-1} \dots \underline{A_{m+1}^{-1} v} \dagger y. 
   \end{equation}
   The unlinks are guaranteed by Lemma~\ref{l: product of paths target and source}. This is the only case that corresponds to a \textbf{C}-pair.
 \end{enumerate}

 From the above case, we conclude that a \textbf{P}-pair happens exactly when 
 \begin{equation}
      \xi \phi = z \dagger u (z^{-1} y) \dagger v \dagger y^{-1}.
 	\label{eq: P-pair}
 \end{equation}
 Note that $u(z^{-1}y) \neq 1_F$. Moreover, if $(\xi, \phi)$ is a \textbf{C}-pair then $\mathcal{L}(\xi \phi) = \mathcal{L}(\xi) = u$.
\begin{definition}
   Let $w \in N_\Gamma$ a no-trivial element. A reduced minimal representation $(\xi_i)_{i=0}^n$ of $w$ satisfies the \textbf{P}-property if $(\xi_i, \xi_{i+1})$ is a \textbf{P}-pair for all $0 \leq i \leq n-1$. 
\end{definition}

\begin{lemma} \label{l: P-property implies loops}
    Let $w \in N_\Gamma$ a no-trivial word and $(\xi_i)_{i=0}^n$ a reduced minimal representation of $w$ with the \textbf{P}-property. Then, $w$ has loops in its reduced representation, in particular $\mathcal{L}(w) = \mathcal{L}(\xi_n)$. Moreover, if $\pi(w)  \neq \emptyset$, then $w$ is a loop.
\end{lemma}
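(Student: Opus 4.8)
The plan is to reduce the statement to the single assertion that the path representation of $w$ contains at least one loop, i.e. that $\mathcal{L}(w)$ is defined. Indeed, if $\pi(w) \neq \emptyset$ then $w$ is itself a path, so its path representation consists of the unique maximal path $w$; if in addition $\mathcal{L}(w)$ exists, this last loop must be that single maximal path, and therefore $w$ is a loop. So the entire content of the lemma is to show, \emph{unconditionally under the \textbf{P}-property}, that the loops carried by the factors $\xi_i$ are not all annihilated by the cancellations inside the product $\xi_0 \xi_1 \cdots \xi_n$; in fact I will track the rightmost one and show it always survives.

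Write each minimal element as $\xi_i = z_i u_i z_i^{-1}$ with $u_i$ a loop and $z_i$ a word whose path representation has no loops. I would first treat the subproduct $\xi_1 \xi_2 \cdots \xi_n$, on which every consecutive pair is a \textbf{P}-pair, and prove by induction on $k = n, n-1, \ldots, 1$ the invariant that the reduced form of $\xi_k \xi_{k+1} \cdots \xi_n$ begins with the block $z_k u_k$ and ends with the block $u_n z_n^{-1}$; in particular the loop $u_n$ is never cancelled and $\mathcal{L}(\xi_k \cdots \xi_n) = u_n$. The base case $k = n$ is just the reduced form of $\xi_n$. For the step, I use the explicit shape of a \textbf{P}-pair recorded in \eqref{eq: P-pair}, namely $\xi\phi = z \dagger u(z^{-1}y) \dagger v \dagger y^{-1}$, which says precisely that the cancellation between the two factors is confined to the conjugating pieces $z^{-1}$ and $y$ and never reaches the loops. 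Since, by the inductive hypothesis, $\xi_{k+1}\cdots\xi_n$ begins with $z_{k+1} u_{k+1}$ — exactly the left-hand block of $\xi_{k+1}$ itself — the cancellation of $\xi_k$ against $\xi_{k+1}\cdots\xi_n$ is identical to its cancellation against $\xi_{k+1}$, so \eqref{eq: P-pair} applies verbatim: the product begins with $z_k u_k$ and leaves the whole tail of $\xi_{k+1}\cdots\xi_n$ from $u_{k+1}$ onward — in particular the trailing block $u_n z_n^{-1}$ — untouched. Along the way I would invoke condition (i) of Definition~\ref{d: minimal representation} to exclude total cancellations, and condition (ii) together with Lemma~\ref{l: product of paths target and source} to ensure that adjacent loops do not merge and that the unlinks appearing in \eqref{eq: P-pair} are genuine.

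It then remains to reinstate the leftmost factor $\xi_0$, whose pair with $\xi_1$ is \emph{not} assumed to be a \textbf{P}-pair (which is why it is excluded from the \textbf{P}-property). But forming $w = \xi_0 \cdot (\xi_1 \cdots \xi_n)$ alters the reduced word only near the left junction, while the trailing block $u_n z_n^{-1}$ sits at the far right and is preserved — here condition (i) of Definition~\ref{d: minimal representation} again forbids $\xi_1 \cdots \xi_n$ from being swallowed whole. Hence $\mathcal{L}(w) = u_n$ is defined, and by the reduction of the first paragraph the hypothesis $\pi(w) \neq \emptyset$ forces $w$ to be a loop. I expect the main obstacle to be the inductive step of the second paragraph: converting the \emph{local} description \eqref{eq: P-pair} of one \textbf{P}-pair into a \emph{global} statement about the long product, i.e. showing rigorously that no cancellation ever crosses a surviving loop. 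This is exactly where the ``begins with $z_k u_k$'' half of the invariant is essential, since it is what makes the long product look, near each junction, like the corresponding two-factor \textbf{P}-pair.
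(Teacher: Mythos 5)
Your argument is correct and shares its technical core with the paper's proof --- both hinge on chaining the \textbf{P}-pair normal form \eqref{eq: P-pair} across the whole product to obtain
\[
\xi_1\cdots\xi_n \;=\; z_1 \dagger u_1(z_1^{-1}z_2)\dagger u_2\cdots(z_{n-1}^{-1}z_n)\dagger u_n\dagger z_n^{-1}
\]
--- but you extract the conclusion differently. The paper feeds the hypothesis $\pi(w)\neq\emptyset$ in immediately: since $w$ must then be a single path, the unlink sitting to the left of each $u_i$ forces $z_i=1_F$ one after another, so $w=\underline{u_1\cdots u_n}$ is a loop. You instead prove the \emph{unconditional} statement that the rightmost loop $u_n$ survives all cancellations, so that $\mathcal{L}(w)=u_n$ exists, and only then invoke $\pi(w)\neq\emptyset$ to conclude that the single maximal path $w$ must coincide with that surviving loop. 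Your route buys something: it simultaneously establishes Remark~\ref{r: normal words have loops} (which the paper states without separate proof), and your right-to-left induction makes explicit the ``cancellation never crosses a surviving loop'' step that the paper compresses into a bare ``by \eqref{eq: P-pair}''. You are also more careful about the index-$0$ factor: the definition of the \textbf{P}-property literally omits the pair $(\xi_0,\xi_1)$, and the paper's proof silently drops $\xi_0$ altogether (working with $\xi_1\cdots\xi_n$ and concluding $w=u_1\cdots u_n$), an indexing slip you noticed and patched.

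One soft spot: your justification for why reinstating $\xi_0$ preserves the trailing block $u_n z_n^{-1}$ --- that condition (i) of Definition~\ref{d: minimal representation} forbids $\xi_1\cdots\xi_n$ from being swallowed whole --- is not by itself sufficient, since ruling out \emph{total} cancellation does not rule out a partial cancellation deep enough to reach $u_n$. The correct reason is the paper's exhaustive case analysis of a reduced pair of minimal elements (cases (i)--(v), including the \textbf{C}-pair case \eqref{eq: C-pair}): in every case the cancellation between $\xi_0$ and $\xi_1$ is confined to the conjugating pieces $z_0^{-1}$ and $z_1$ and stops before touching $u_1$, hence a fortiori before $u_n$; since $\xi_1\cdots\xi_n$ begins with the same prefix $z_1u_1$ as $\xi_1$, the same confinement holds against the long product. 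With that substitution your argument is complete.
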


\begin{proof}
   Set $\xi_i = z_i u_i z_i^{-1}$, where $u_i$ is a loop and each $z_i$ has no loops, then by \eqref{eq: P-pair} we obtain
   \[
      \xi_1 \ldots \xi_n = z_1 \dagger u_1 (z_1^{-1}z_2)\dagger u_2 (z_2^{-1}z_3) \dagger u_3(z_3^{-1}z_4) \ldots (z_{n-1}^{-1}z_n)\dagger u_n\dagger z_n.
   \]
    Thus, if $z_n = A_1 A_2 \ldots A_m$ is the block decomposition of $z_n$, then
    \[
       w = x \dagger u_n \dagger A_1 \dagger \ldots \dagger A_n,
    \]
    where $x = z_1 \dagger u_1 (z_1^{-1}z_2)\dagger u_2 (z_2^{-1}z_3) \dagger u_3(z_3^{-1}z_4) \ldots (z_{n-1}^{-1}z_n)$. By construction, $z_n$ has no loops. Thus, $A_i$ is not a loop for all $i$. Therefore, the first loop that appears in the block decomposition of $w$ (when reading from right to left) is $u_n$. Consequently, $\mathcal{L}(w) = u_n = \mathcal{L}(\xi_n)$.     

   For the last assertion, observe that if $z_1 \neq 1_F$, then by the unlink on the left side of $u_1$ we have that $\pi(w)= \emptyset$, thus $z_1 = 1_F$. Analogously, if $z_2 \neq 1_F$ the unlink of $u_2$ will imply that $\pi(w)= \emptyset$. Thus, by induction argument using the unlinks at the left of each $u_i$ one shows that $z_i = 1_F$ for all $1 \leq i \leq n$. Thus, $w = u_1 \ldots u_n$, since $\pi(w) \neq \emptyset$, we have that those loops cannot be unlinked, consequently $w = \underline{u_1 u_2 \ldots u_n}$ is a loop.  
\end{proof}

\begin{lemma} \label{l: basic switch}
   Let $\xi$ and $\phi$ be such that $(\xi, \phi)$ is a reduced representation of a word $w \in N_\Gamma$. If $(\xi, \phi)$ is a \textbf{C}-pair, then there exists a minimal element $\xi'$ such that $w = \xi \phi = \phi \xi'$, and $(\phi, \xi')$ is a minimal representation of $w$ and $(\phi, \xi')$ is a \textbf{P}-pair. 
\end{lemma}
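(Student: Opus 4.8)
The plan is to produce the required element by conjugation: set $\xi' := \phi^{-1}\xi\phi$, so that the factorization $w = \xi\phi = \phi\xi'$ holds automatically. Everything then reduces to two tasks, namely showing that $\xi'$ is again a \emph{minimal} element of $N_\Gamma$, and checking that the pair $(\phi,\xi')$ is reduced and satisfies the \textbf{P}-pair condition. Since $\xi'$ is a conjugate of the nontrivial $\xi$ it is automatically nontrivial, and $\phi$ is minimal by hypothesis; thus $(\phi,\xi')$ will be a minimal representation of $w$ as soon as $\xi'$ is shown to be minimal.

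To carry this out I would feed in the explicit description of a \textbf{C}-pair given by \eqref{eq: C-pair}: writing $z = A_1\ldots A_n$ and $y = B_1\ldots B_m$ for the path representations of the conjugators of $\xi = zuz^{-1}$ and $\phi = yvy^{-1}$, one has $n>m$, $A_i = B_i$ for $1\le i\le m$, and $A_{m+1}^{-1}v$ is a path. Using $z^{-1}y = A_n^{-1}\ldots A_{m+1}^{-1}$, a direct computation gives $\xi' = P\,u\,P^{-1}$ with $P := y\,v^{-1}A_{m+1}\ldots A_n$. The heart of the argument is to determine the path representation of $P$: since $v^{-1}A_{m+1}$ is a path (the inverse of the path $A_{m+1}^{-1}v$) and $v$ is a loop while $A_{m+1}$ is not, $v^{-1}A_{m+1}$ is a \emph{non-loop} path, and using that $v^{-1}$ is a loop together with $A_m\dagger A_{m+1}$ one checks $A_m\dagger v^{-1}$ and $(v^{-1}A_{m+1})\dagger A_{m+2}$. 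Hence the path representation of $P$ is $A_1,\ldots,A_m,\,(v^{-1}A_{m+1}),\,A_{m+2},\ldots,A_n$, which contains no loops (the $A_i$ have none, and $v^{-1}A_{m+1}$ is a non-loop); since $u$ is a loop, this shows $\xi' = PuP^{-1}$ is minimal.

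It then remains to verify that $(\phi,\xi')$ is a reduced \textbf{P}-pair. For reducedness, condition (ii) of Definition~\ref{d: minimal representation} is vacuous because the conjugators $y$ and $P$ are distinct ($P$ is strictly longer), while condition (i) holds since $\phi$, $\xi'$ and $\phi\xi'=w$ are all nontrivial. For the \textbf{P}-pair property I would verify the canonical form \eqref{eq: P-pair} for $(\phi,\xi')$: here $y^{-1}P = v^{-1}A_{m+1}\ldots A_n$, so $v(y^{-1}P) = A_{m+1}\ldots A_n$ and the asserted decomposition reads $\phi\xi' = y\dagger(A_{m+1}\ldots A_n)\dagger u\dagger P^{-1}$. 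The three required unlinks follow from facts already in hand: $y\dagger A_{m+1}$ from $A_m\dagger A_{m+1}$ in the path representation of $z$; $A_n\dagger u$ and $u\dagger A_n^{-1}$ from the defining form $\xi = z\dagger u\dagger z^{-1}$ of a minimal element; and the observation that $A_n^{-1}$ is the leading path of $P^{-1}$.

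I expect the main obstacle to be the path-representation bookkeeping in the second step: correctly locating where the loop $v$ is absorbed into a non-loop path and confirming that no new loop is created in $P$, all while keeping the unlink relations straight. The essential inputs are Lemma~\ref{l: product of paths target and source} (for the behaviour of $S$ and $T$ under products and for closure of paths under inverse), the fact that the product of a loop and a non-loop path is never a loop, and the defining unlink conditions $z\dagger u\dagger z^{-1}$ of a minimal element; once these are organized, the verification of \eqref{eq: P-pair} is routine.
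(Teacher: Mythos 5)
Your proposal is correct and follows essentially the same route as the paper: the element $\xi' = \phi^{-1}\xi\phi = PuP^{-1}$ with $P = yv^{-1}A_{m+1}\ldots A_n$ is exactly the paper's $K^{-1}uK$, and the key step in both is identifying the loop-free path representation of $P$ (with $v^{-1}$ absorbed into the non-loop path $v^{-1}A_{m+1}$). The only cosmetic difference is that you certify the \textbf{P}-pair property by exhibiting the canonical form \eqref{eq: P-pair}, while the paper checks $\mathcal{L}(\phi\xi')=\mathcal{L}(w)=u=\mathcal{L}(\xi')$; these are equivalent.
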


\begin{proof}
    Let $\xi = zuz^{-1}$ and $\phi= yvy$. Since $(\xi, \phi)$ is a \textbf{C}-pair then it is of the form \eqref{eq: C-pair} and $\mathcal{L}(w)=u$, thus the path representations of $z$ and $y$ are $z = A_1 \ldots A_n$ and $y = A_1 \ldots A_m$, where $m < n$. Then, 
   \begin{align*}
      \xi \phi &= A_1 \dots A_n u A_n^{-1} \ldots A_{m+2}^{-1} \underline{A_{m+1}^{-1} v} \dagger A_m ^{-1} \ldots A_1^{-1} \\ 
      &= (A_1 \dots A_n) K (K^{-1} u K)  
   \end{align*}
   where $K = A_n^{-1} \ldots A_{m+2}^{-1} \underline{A_{m+1}^{-1} v} \dagger A_m ^{-1} \ldots A_1^{-1}$. Observe that
   \begin{align*}
      (A_1 \dots A_n) K &= (A_1 \dots A_n) A_n^{-1} \ldots A_{m+2}^{-1} \underline{A_{m+1}^{-1} v} \dagger A_m^{-1} \ldots A_1^{-1} \\ 
      &= A_1 \dots A_m \dagger v \dagger A_m^{-1} \ldots A_1^{-1} \\ 
      &= yvy = \phi.
   \end{align*}
    Notice that $K^{-1} \dagger u \dagger K$ is a minimal element of $N_\Gamma$. We set $\xi' = K^{-1}uK$. Thus, $w = \xi \phi = \phi \xi'$, $(\phi, \xi')$ is a reduced minimal representation of $w$ that also is a \textbf{P}-pair since $\mathcal{L}(\phi \xi')= \mathcal{L}(w) = u = \mathcal{L}(\xi')$.
\end{proof}

In view of Lemma~\ref{l: basic switch} we define the switch of a \textbf{C}-pair $(\xi, \phi)$ as $\mathcal{S}(\xi, \phi) := (\phi, \xi')$. Furthermore, if $(\xi_0, \dots, \xi_n)$ is a reduced minimal decomposition such that $(\xi_{i-1}, \xi_i)$ is a \textbf{C}-pair, we define
\begin{equation}
   \mathcal{S}^i(\xi_0, \dots, \xi_n) := (\xi_0, \dots, \xi_{i-2}, \xi_i, \xi_{i-1}', \xi_{i+1}, \dots, \xi_n),
\end{equation}
where $\mathcal{S}(\xi_{i-1}, \xi_i)= (\xi_i,, \xi_{i-1}')$.

\begin{proposition} \label{p: every normal word has P-representation}
   There exists an algorithm $\mathcal{P}$ such that for all word $w \in N_\Gamma \setminus \{ 1 \}$ and for all reduced minimal representation $(\xi_i)_{i=0}^n$ of $w$ we have that 
   \[
      \mathcal{P}(\xi_0, \dots, \xi_n) = (\phi_1, \dots, \phi_m), \, \, m \leq n,
   \]
   is a reduced minimal representation of $w$ with the \textbf{P}-property. 
\end{proposition}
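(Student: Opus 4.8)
The plan is to realise $\mathcal{P}$ as a bubble-sort driven by the switch operation $\mathcal{S}$. Given a reduced minimal representation $(\xi_0,\dots,\xi_n)$ of $w$, I would scan the consecutive pairs $(\xi_{i-1},\xi_i)$: if all of them are \textbf{P}-pairs we are already done, since that is exactly the \textbf{P}-property. Otherwise I pick an index $i$ for which $(\xi_{i-1},\xi_i)$ is a \textbf{C}-pair and apply the switch $\mathcal{S}^i$ of Lemma~\ref{l: basic switch}, replacing $(\xi_{i-1},\xi_i)$ by $(\xi_i,\xi_{i-1}')$ with $\xi_{i-1}\xi_i=\xi_i\xi_{i-1}'$; afterwards I run the reduction map $\mathcal{R}$ to restore conditions (i)--(ii) of a reduced minimal representation at the two new junctions. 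Since the switch leaves the product $w$ unchanged and $\mathcal{R}$ never lengthens a representation, each step returns a reduced minimal representation of $w$ of length no greater than the input, so iterating and outputting the representation reached when no \textbf{C}-pair remains gives $\mathcal{P}(\xi_0,\dots,\xi_n)=(\phi_1,\dots,\phi_m)$ with $m\le n$.

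The substance of the proof is termination, and the key is a monovariant. To each minimal element $zuz^{-1}$ I attach the integer $\ell(zuz^{-1})$ equal to the number of paths in the path representation of its conjugator $z$. By the classification preceding the statement, a \textbf{C}-pair is exactly the situation of \eqref{eq: C-pair}, where the left conjugator $z=A_1\dots A_n$ is strictly longer than the right conjugator $y=A_1\dots A_m$, that is $\ell(\xi_{i-1})=n>m=\ell(\xi_i)$; moreover the conjugator $K^{-1}$ produced by $\mathcal{S}^i$ in Lemma~\ref{l: basic switch} again has exactly $n$ paths, since its path representation is $A_n^{-1}\dots A_{m+2}^{-1}\,\underline{A_{m+1}^{-1}v}\,\dagger\,A_m^{-1}\dots A_1^{-1}$, whose consecutive entries are genuinely unlinked. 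Hence a switch acts on the sequence $(\ell(\xi_0),\dots,\ell(\xi_n))$ precisely as the adjacent transposition that turns a strict descent $n>m$ into an ascent $m<n$, so the total number of inversions of this length-sequence drops by exactly one. Reductions performed by $\mathcal{R}$ only delete or merge elements, hence strictly decrease the number of minimal elements and cannot increase the inversion count. Ordering the pair (number of minimal elements, number of length-inversions) lexicographically therefore yields a value in $\mathbb{N}\times\mathbb{N}$ that strictly decreases at every step, forcing the algorithm to halt after finitely many switches.

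When the algorithm halts, no consecutive pair is a \textbf{C}-pair; as every reduced pair of minimal elements falls under one of the cases (i)--(v) and is accordingly either a \textbf{P}-pair or a \textbf{C}-pair, all consecutive pairs are \textbf{P}-pairs and the output has the \textbf{P}-property. I expect the main obstacle to be the bookkeeping that makes the monovariant legitimate: one must verify that $\mathcal{S}^i$ really preserves the path-length of the switched conjugator, using the explicit form of $K$ in Lemma~\ref{l: basic switch}, so that it is a bona fide adjacent transposition of the length-sequence, and that inserting $\mathcal{S}^i$ into a longer tuple and then applying $\mathcal{R}$ still lands in the class of reduced minimal representations of the global word $w$, rather than merely of the local product $\xi_{i-1}\xi_i$ handled by Lemma~\ref{l: basic switch}. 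Once these two points are settled, termination and correctness follow from the lexicographic monovariant as above.
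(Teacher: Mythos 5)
Your proof is correct, but it organizes the argument differently from the paper. The paper proceeds by induction on the length of the reduced minimal representation: it recursively puts the tail $(\xi_1,\dots,\xi_n)$ into \textbf{P}-form, prepends $\xi_0$, and then performs a single left-to-right sweep $\mathcal{K}_0 \mapsto \mathcal{K}_1 \mapsto \dots$ in which the one offending element is pushed rightward by successive switches $\mathcal{S}^1, \mathcal{S}^2, \dots$; termination is immediate because either $\mathcal{R}$ shortens the tuple (and the induction hypothesis applies) or the defect reaches the right end after at most $n$ switches, where $(\phi_n,\xi^{(n)})$ is a \textbf{P}-pair by Lemma~\ref{l: basic switch}. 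You instead run an order-independent rewriting procedure (switch any \textbf{C}-pair, then reduce) and prove termination via a lexicographic monovariant: the pair consisting of the tuple length and the inversion count of the sequence of conjugator path-lengths $\ell(\xi_i)$. The substance of your argument is the observation that a \textbf{C}-pair is exactly a strict descent $\ell(\xi_{i-1})>\ell(\xi_i)$ (this is case \eqref{eq: C-pair}) and that the conjugator $K^{-1}$ produced by the switch has the same number of paths as the original conjugator $z$ — which is indeed readable off the explicit path representation of $K$ in the proof of Lemma~\ref{l: basic switch}, using that $\underline{A_{m+1}^{-1}v}$ is a single non-loop path and that the unlinks $A_{i+1}^{-1}\dagger A_i^{-1}$ are inherited from $A_i \dagger A_{i+1}$. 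Both arguments rest on the same two pillars (the five-case classification of reduced pairs and Lemma~\ref{l: basic switch}); yours makes the combinatorial reason for termination explicit and is robust to the order in which \textbf{C}-pairs are attacked, while the paper's induction yields a more deterministic algorithm and gives the length bound $m\le n$ for free. One small point of care: your claim that $\mathcal{R}$ "strictly decreases the number of minimal elements" holds only when $\mathcal{R}$ acts nontrivially; when the switched tuple is already reduced the first component of your monovariant is unchanged, and it is the inversion count that carries the decrease — your lexicographic ordering already accounts for this, so no repair is needed.
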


\begin{proof}
    For a minimal representation, its length is defined as the number of elements in the tuple. We will use induction with respect to the length of the reduced minimal representation. 
    Note that if the length of the reduced minimal representations is $1$, then the result is trivial. Moreover, the case where the representation has length $2$ has already been proven in Lemma~\ref{l: basic switch}, providing the base for the induction.
    Assume that we have an algorithm $\mathcal{P}_i$ for all reduced minimal representations of length $i \leq n$. Let $\mathcal{A}_0=(\xi, \xi_1, \ldots, \xi_n)$ be a reduced minimal representation (of length $n+1$) of a no-trivial word $w \in N_\Gamma$. Recall that we have a reduction algorithm $\mathcal{R}$ that maps minimal representations to reduced minimal representations. Since $\mathcal{A}_0$ is a reduced minimal representation, then $(\xi_1, \dots, \xi_n)$ is a reduced minimal representation of $\xi_1 \xi_2 \ldots \xi_n$ of length $n$. Thus, $(\phi_1, \dots, \phi_m) :=\mathcal{P}_n(\xi_1, \dots, \xi_n)$ is a reduced minimal representation with the \textbf{P}-property and $m \leq n$. Therefore, $\overline{\mathcal{K}}_0:=(\xi,\phi_1, \dots, \phi_m )$ is a minimal representation of $w$. Then, $\mathcal{K}_0:= \mathcal{R}(\overline{\mathcal{K}}_0)$ is a reduced minimal representation of $w$. If the length of $\mathcal{K}_0$ is less than $n+1$, then we are done by the induction hypothesis. On the other hand, if the length of $\mathcal{K}_0$ is $n+1$ then $m = n$ and $\mathcal{K}_0 = \overline{\mathcal{K}}_0$. Thus, $(\xi, \phi_1, \dots, \phi_n)$ is a reduced minimal representation almost with the \textbf{P}-property, all the consecutive pairs are \textbf{P}-pairs, with the possible exception of $(\xi, \phi_1)$. If $\mathcal{K}_0$ has the \textbf{P}-property we are done, set $\mathcal{K}:= \mathcal{S}_{n}\mathcal{S}_{n-1} \ldots \mathcal{S}_{2}\mathcal{S}_{1}(\mathcal{K}_0)$. That is, we have the following chain:
   \begin{align*}
      \mathcal{K}_0=(\xi, \phi_1, \dots, \phi_n) 
      &\overset{\mathcal{S}^1}{\longmapsto} (\phi_1, \xi^{(1)}, \phi_2, \dots, \phi_n) \\ 
      &\overset{\mathcal{S}^2}{\longmapsto} (\phi_1, \phi_2, \xi^{(2)}, \dots, \phi_n) \\  
      & \quad \vdots \\
      &\overset{\mathcal{S}^i}{\longmapsto} (\phi_1, \dots,\phi_i, \xi^{(i)}, \phi_{i+1},\dots, \phi_n) \\  
      & \quad \vdots \\ 
      &\overset{\mathcal{S}^n}{\longmapsto} \mathcal{K}:=(\phi_1, \phi_2,  \dots, \phi_n, \xi^{(n)}),  
   \end{align*}
    where $\mathcal{S}(\xi, \phi_1) = (\phi_1, \xi^{(1)})$ and in general $\mathcal{S}(\xi^{(i)}, \phi_{i+1}) = (\phi_{i+1}, \xi^{(i+1})$ for all $1 \leq i \leq n-1$. Note that $\mathcal{K}$ is a minimal representation of $w$, and by construction, it possesses the \textbf{P}-property. Thus, if $\mathcal{K}$ is reduced, we are done. Conversely, if $\mathcal{K}$ is not reduced, then the length of $\mathcal{R}(\mathcal{K})$ is less than or equal to $n$. Therefore, the induction hypothesis leads to the desired conclusion.
\end{proof}

\begin{proposition} \label{p: only one path by class}
 Let $\alpha, \beta \in \Gamma$, such that neither $\alpha$ nor $\beta$ is an identity. Then, $\alpha = \beta$ if, and only if, $\overline{\lfloor \alpha \rfloor} = \overline{\lfloor  \beta \rfloor}$ in $G_\Gamma$. 
\end{proposition}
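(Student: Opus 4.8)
The forward implication is immediate: if $\alpha = \beta$ then $\lfloor\alpha\rfloor = \lfloor\beta\rfloor$ already in $F$, hence their classes coincide in $G_\Gamma = F/N_\Gamma$. So the plan is to prove the converse. Assuming $\overline{\lfloor\alpha\rfloor} = \overline{\lfloor\beta\rfloor}$, I would set $w := \lfloor\alpha\rfloor\lfloor\beta\rfloor^{-1}$, which by hypothesis lies in $N_\Gamma$, and argue that $\alpha \neq \beta$ leads to a contradiction. Note that if $\alpha \neq \beta$ then $\lfloor\alpha\rfloor \neq \lfloor\beta\rfloor$, so $w$ is a non-trivial reduced word of $F$, and moreover $\pi(w) = \alpha\beta^{-1}$, the partial composition in $\Gamma$, which is non-empty exactly when $\operatorname{s}(\alpha) = \operatorname{s}(\beta)$.

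First I would dispose of the case $\pi(w) \neq \emptyset$, i.e. $\operatorname{s}(\alpha) = \operatorname{s}(\beta)$. Here $w$ is a non-trivial element of $N_\Gamma$ with $\pi(w) \neq \emptyset$, so the Proposition immediately preceding this statement forces $w$ to be a loop; that is, $\alpha\beta^{-1} = \pi(w)$ is an identity morphism $1_x$. Since composability of $\alpha\beta^{-1}$ guarantees $\operatorname{s}(\alpha) = \operatorname{s}(\beta)$, right multiplication by $\beta$ is legitimate and gives $\alpha = \alpha\beta^{-1}\beta = 1_x\beta = \beta$, contradicting $\alpha \neq \beta$.

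The remaining, and main, obstacle is the case $\pi(w) = \emptyset$, i.e. $\operatorname{s}(\alpha) \neq \operatorname{s}(\beta)$, where the preceding Proposition gives no information. Here I would invoke Remark~\ref{r: normal words have loops}: since $w$ is a non-trivial element of $N_\Gamma$, Proposition~\ref{p: every normal word has P-representation} supplies a reduced minimal representation with the \textbf{P}-property, and hence the path representation of $w$ must contain a loop. But because $\pi(w) = \emptyset$ the word $w = \lfloor\alpha\rfloor\lfloor\beta\rfloor^{-1}$ is not a single path; its path representation is therefore exactly the two one-symbol paths $\lfloor\alpha\rfloor$ and $\lfloor\beta\rfloor^{-1}$ (each is a path, since $\pi(\lfloor\alpha\rfloor) = \alpha$ and $\pi(\lfloor\beta\rfloor^{-1}) = \beta^{-1}$ are non-empty, and they are unlinked precisely because $\pi(w) = \emptyset$). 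Now $\lfloor\alpha\rfloor$ is a loop exactly when $\alpha$ is an identity morphism, and $\lfloor\beta\rfloor^{-1}$ is a loop exactly when $\beta$ is; so for non-identity $\alpha,\beta$ neither symbol is a loop, contradicting the presence of a loop in the path representation. This closes the essential case.

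I expect this last case, $\pi(w) = \emptyset$, to be the genuine difficulty, since it is exactly where the purely group-theoretic hypothesis $w \in N_\Gamma$ must be translated into a groupoid statement through the combinatorics of path representations and loops developed around Remark~\ref{r: normal words have loops}; the non-emptiness case, by contrast, is handled in one line by the preceding Proposition. For the degenerate situation in which one of $\alpha,\beta$ is an identity morphism, say $\beta = 1_y$, the symbol $\lfloor\beta\rfloor$ is itself a loop, so $\overline{\lfloor\beta\rfloor} = 1$ and the hypothesis reduces to $\lfloor\alpha\rfloor \in N_\Gamma$, whence the preceding Proposition forces $\alpha$ to be an identity as well; thus the substance of the statement lies with non-identity morphisms, for which the argument above is complete.
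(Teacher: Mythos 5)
Your argument follows the same route as the paper's proof: form $w := \lfloor\alpha\rfloor\lfloor\beta\rfloor^{-1}\in N_\Gamma$ and use Proposition~\ref{p: every normal word has P-representation} together with Remark~\ref{r: normal words have loops} to force a loop into the path representation of $w$. Your explicit split into the cases $\pi(w)\neq\emptyset$ and $\pi(w)=\emptyset$ is a cleaner version of the paper's one-line assertion that ``the only possibility is that $\lfloor\alpha\rfloor\lfloor\beta\rfloor^{-1}$ is a loop,'' and for non-identity $\alpha,\beta$ both of your cases are handled correctly.

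However, the degenerate case you isolate at the end is a genuine gap, not a formality, and your argument does not close it. Showing that $\lfloor\alpha\rfloor\in N_\Gamma$ forces $\alpha$ to be an identity does not yield $\alpha=\beta$: if $x\neq y$ are two objects of $\Gamma$, then $\lfloor 1_x\rfloor$ and $\lfloor 1_y\rfloor$ are both loops, hence both lie in $N_\Gamma$, so $\overline{\lfloor 1_x\rfloor}=\overline{\lfloor 1_y\rfloor}=1_{G_\Gamma}$ while $1_x\neq 1_y$. Thus the statement, read literally, fails for identity morphisms at distinct objects, and no argument can repair that case. To be fair, this is not a defect of your approach alone: the paper's own proof silently skips exactly the same possibility when it declares that a loop in the path representation of $\lfloor\alpha\rfloor\lfloor\beta\rfloor^{-1}$ must be the whole word, ignoring that $\lfloor\alpha\rfloor$ or $\lfloor\beta\rfloor^{-1}$ alone could be the loop. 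The statement should be restricted to non-identity morphisms (consistently with the later presentation of $G_\Gamma$, where $\lceil 1_x\rceil=1$ for every object $x$, and with the fact that the well-definedness of $\psi$ only requires the claim for classes $T\neq 1_{G_\Gamma}$); under that restriction your proof is complete and essentially identical to the paper's.
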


\begin{proof}
    Suppose that $\overline{\lfloor \alpha \rfloor} = \overline{\lfloor \beta \rfloor}$. Then, $\lfloor \alpha \rfloor \lfloor \beta \rfloor^{-1} \in N_\Gamma$. Thus, by Proposition~\ref{p: every normal word has P-representation} and Lemma~\ref{l: P-property implies loops} we have that $\lfloor \alpha \rfloor \lfloor \beta \rfloor^{-1}$ is $1_F$ or has a loop in its path representation. On one hand, if $\lfloor \alpha \rfloor \lfloor \beta \rfloor^{-1} = 1_F$ we are done. On the other hand, if $\lfloor \alpha \rfloor \lfloor \beta \rfloor^{-1} \neq 1_F$ has loops, the only possibility is that $\lfloor \alpha \rfloor \lfloor \beta \rfloor^{-1}$ is a loop (since $\lfloor \alpha \rfloor \lfloor \beta \rfloor^{-1}$ is already in its reduced representation form and neither $\alpha$ nor $\beta$ are identities). Therefore, $\alpha = \beta$.
\end{proof}

\begin{lemma}
   Let $g \in G_{\Gamma},$ if $g \neq 1$, then there exists unique $\gamma_0, \gamma_1, \dots, \gamma_n \in \Gamma$ such that $\gamma_i \gamma_{i+1} = \emptyset$ for all $i \in \{ 0, \dots, n-1 \}$, $\gamma_i$ is not an identity element of $\Gamma$ and $g = \overline{\lfloor \gamma_0 \rfloor} \overline{\lfloor \gamma_1 \rfloor} \ldots \overline{\lfloor \gamma_n \rfloor}$.
   \label{l: unique representations of elements in G Gamma}
\end{lemma}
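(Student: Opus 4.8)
The plan is to establish existence and uniqueness separately. The two main tools are Lemma~\ref{l: paths are related with its paths}, which lets me replace any path by a single letter $\lfloor\gamma\rfloor$ modulo $N_\Gamma$, and the combination of Proposition~\ref{p: every normal word has P-representation} with Remark~\ref{r: normal words have loops}, which guarantees that every non-trivial element of $N_\Gamma$ has a loop in its path representation. Throughout I will use that an identity morphism gives $\overline{\lfloor 1_x\rfloor}=1$ (since $\lfloor 1_x\rfloor$ is a loop) and that $\overline{\lfloor\alpha\rfloor}^{-1}=\overline{\lfloor\alpha^{-1}\rfloor}$ (since $\lfloor\alpha\rfloor\lfloor\alpha^{-1}\rfloor$ is a loop).

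For existence, I would first fix a reduced representative $w=a_1\cdots a_k\in F$ of $g$ with $a_j=\lfloor\alpha_j\rfloor^{\pm1}$; using the observation above this yields $g=\overline{\lfloor\beta_1\rfloor}\cdots\overline{\lfloor\beta_k\rfloor}$ for suitable $\beta_j\in\Gamma$. I then normalize by strong induction on the number $N$ of factors in \emph{any} expression $g=\prod_{i=1}^N\overline{\lfloor\beta_i\rfloor}$ with $g\neq1$, where no condition on the $\beta_i$ is assumed. If some $\beta_i$ is an identity morphism, then $\overline{\lfloor\beta_i\rfloor}=1$ and it can be deleted, shortening the expression; if no factor is an identity but some adjacent pair $\beta_i\beta_{i+1}$ is composable, then Lemma~\ref{l: paths are related with its paths} gives $\overline{\lfloor\beta_i\rfloor}\,\overline{\lfloor\beta_{i+1}\rfloor}=\overline{\lfloor\beta_i\beta_{i+1}\rfloor}$, which also shortens the expression. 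Otherwise there are no identity factors and no composable adjacent pairs, which is exactly the required form. The process terminates because $N$ strictly decreases and, as $g\neq1$, the empty product is never reached.

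For uniqueness, suppose $g=\overline{\lfloor\gamma_0\rfloor}\cdots\overline{\lfloor\gamma_n\rfloor}=\overline{\lfloor\delta_0\rfloor}\cdots\overline{\lfloor\delta_m\rfloor}$ with both tuples non-identity and consecutively non-composable. Set $w=\lfloor\gamma_0\rfloor\cdots\lfloor\gamma_n\rfloor$ and $v=\lfloor\delta_0\rfloor\cdots\lfloor\delta_m\rfloor$. These are reduced words whose path representations consist precisely of the single-letter paths $\lfloor\gamma_i\rfloor$ (resp. $\lfloor\delta_j\rfloor$), none of which is a loop: the non-composability $\gamma_i\gamma_{i+1}=\emptyset$ says exactly $\lfloor\gamma_i\rfloor\dagger\lfloor\gamma_{i+1}\rfloor$, and each $\gamma_i$ is a non-identity. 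Since $\overline{w}=\overline{v}$ we have $wv^{-1}\in N_\Gamma$, and the goal is to force $wv^{-1}=1_F$. I would analyse the cancellation in $wv^{-1}=\lfloor\gamma_0\rfloor\cdots\lfloor\gamma_n\rfloor\lfloor\delta_m\rfloor^{-1}\cdots\lfloor\delta_0\rfloor^{-1}$, letting $r$ be the number of trailing cancellations (so $\gamma_{n-j}=\delta_{m-j}$ for $j<r$). Either everything cancels, which forces $n=m$ and $\gamma_i=\delta_i$ and we are done, or a non-trivial reduced word remains.

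In the latter case I would compute the path representation of the surviving reduced word $\lfloor\gamma_0\rfloor\cdots\lfloor\gamma_{n-r}\rfloor\lfloor\delta_{m-r}\rfloor^{-1}\cdots\lfloor\delta_0\rfloor^{-1}$ and show it contains no loop, contradicting Remark~\ref{r: normal words have loops}. Using $\gamma_i\gamma_{i+1}=\emptyset$ and $\delta_j\delta_{j+1}=\emptyset$, every adjacent pair inside the positive block $\lfloor\gamma_i\rfloor\lfloor\gamma_{i+1}\rfloor$ and inside the negative block $\lfloor\delta_{j+1}\rfloor^{-1}\lfloor\delta_j\rfloor^{-1}$ is unlinked, so the only pair that can form a path is the central junction $\lfloor\gamma_{n-r}\rfloor\lfloor\delta_{m-r}\rfloor^{-1}$; and even then its image $\gamma_{n-r}\delta_{m-r}^{-1}$ is a loop only if $\gamma_{n-r}=\delta_{m-r}$, which is excluded by the maximality of $r$. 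Thus the maximal paths are non-identity single letters together with at most one non-loop junction path, so no loop appears, the desired contradiction. I expect this final case analysis of the central junction, together with the boundary situations where one of the two blocks is entirely consumed, to be the main obstacle, since it is precisely where the non-composability hypotheses, the non-identity condition, and the loop-detection of Remark~\ref{r: normal words have loops} must all be combined correctly.
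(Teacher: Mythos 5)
Your proposal is correct and follows essentially the same route as the paper: existence by normalizing a representative word via Lemma~\ref{l: paths are related with its paths}, and uniqueness by observing that $wv^{-1}\in N_\Gamma$ would have a loop-free path representation unless it is trivial, contradicting Proposition~\ref{p: every normal word has P-representation} and Remark~\ref{r: normal words have loops}. The only cosmetic difference is that the paper peels off the last letter and inducts where you analyse the full cancellation and the central junction in one step, which is in fact slightly more explicit than the paper's own argument.
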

\begin{proof}
 Let $w \in F$ such that $\overline{w} = g$, and $A_1 A_2 \dots A_m$ a reduced path representation of $w$. Observe that if $A_i$ is a loop, then $\overline{A_1 \dots A_{i-1} A_{i+1} \dots A_n} = g$. Thus, we can assume that the path representation of $w$ has no loops. Define $\gamma_i := \pi(A_i)$, therefore $\overline{ w' } = g$, where $w' = \lfloor \gamma_0 \rfloor \dots \lfloor \gamma_n \rfloor$. Observe that by construction $\{ \gamma_i \}$ satisfies the desired properties. For the uniqueness suppose that $g = \overline{\lfloor \rho_0 \rfloor} \overline{\lfloor \rho_1 \rfloor} \ldots \overline{\lfloor \gamma_n \rfloor}$, then
 \[
   \lfloor \gamma_0 \rfloor \dots \lfloor \gamma_n \rfloor \lfloor \rho_m \rfloor^{-1} \dots \lfloor \rho_0 \rfloor^{-1} \in N_\Gamma.
 \]
 Recall, that by Proposition~\ref{p: every normal word has P-representation} all the no-trivial elements of $N_\Gamma$ has loops in its path decomposition. By hypothesis $\lfloor \gamma_0 \rfloor \dots \lfloor \gamma_n \rfloor$ and $\lfloor \rho_m \rfloor^{-1} \dots \lfloor \rho_0 \rfloor^{-1}$ has no loops, if $\lfloor \gamma_n \rfloor \lfloor \rho_m \rfloor^{-1} \neq 1_F$, then the above product is a no-trivial element of $N_\Gamma$ with no loops, which is a contradiction. Thus, $\lfloor \gamma_n \rfloor \lfloor \rho_m \rfloor^{-1} = 1_F$. Hence, by an induction argument we conclude that $n=m$ and $\lfloor \gamma_i \rfloor = \lfloor \rho_i \rfloor$.
\end{proof}

By Proposition~\ref{p: presentation of universal groupoid}, Proposition~\ref{p: only one path by class} and Lemma~\ref{l: unique representations of elements in G Gamma} we obtain the following proposition:
\begin{proposition} \label{p: universal partial action group properties}
    Let $\Gamma \rightrightarrows X$ be a groupoid. Then, the group $G_{\Gamma}$ satisfies the following properties:
    \begin{enumerate}[(i)]
        \item for all $\alpha, \beta \in \Gamma \setminus \{ 1_x : x \in X \}$, $\lceil \alpha \rceil = \lceil b \rceil$ if, and only if, $\alpha = \beta$;
        \item for all $z \in G_{\Gamma} \setminus \{ 1 \}$, there exists unique $\gamma_0, \ldots, \gamma_n \in \Gamma \setminus \{ 1_x : x \in X \}$, such that
            \[
               z = \lceil \gamma_0 \rceil \lceil \gamma_1 \rceil \ldots \lceil \gamma_n \rceil,
            \]
            and $\operatorname{s}(\gamma_i) \neq \operatorname{t}(\gamma_{i+1})$ for all $i = 0, 1, \ldots, n-1$;
    \end{enumerate}
\end{proposition}

\subsection{The universal partial group action of a groupoid}
Now we can define a partial action of $G_\Gamma$ on $X$
\begin{equation}
    \theta^{\Gamma} := \Big( G^{\Gamma}, X, \{ X^{\Gamma}_{g} \}_{g \in G_{\Gamma}}, \{ \theta_{g}^{\Gamma} \}_{g \in G_{\Gamma}} \Big)
\end{equation}
such that:
\[
    X_{\omega}^{\Gamma}:= 
    \left\{\begin{matrix}
        X  & \text{ if } \omega = 1,\\ 
        \{ \operatorname{t}(\gamma) \}  & \text{ if } \omega = \lceil \gamma \rceil \text{ for some } \gamma \in \Gamma,\\ 
        \varnothing  &  \text{ otherwise}.
    \end{matrix}\right.
\]
For the morphisms we set
\begin{enumerate}[(i)]
    \item $\theta^{\Gamma}_1 := 1_X$, 
    \item $\theta^{\Gamma}_\omega := \emptyset$ if $\omega \neq \lceil \gamma \rceil$ for all $\gamma \in \Gamma$, and 
    \item $\theta_{\lceil \gamma \rceil}(\operatorname{s}(\gamma)) = \operatorname{t}(\gamma)$ for all $\gamma \in \Gamma$.
\end{enumerate}
Observe that the sets $\{ X_{\omega} \}$ and the maps $\{ \theta^{\Gamma}_{\omega} \}$ are well-defined by Proposition~\ref{p: universal partial action group properties}. Furthermore, by direct computations one verifies that $\theta^{\Gamma}$ is a partial group action of $G_{\Gamma}$ on $X$.

\begin{remark} \label{r: morphism of Psi theta gamma}
    Note that by definition, the set of morphism of $\Psi(\theta^{\Gamma})$ is
    \begin{align*}
        \{ (\theta_{w}(x), w, x) : x \in X_{w^{-1}} \}
        &= \{ (\theta_{\lceil \gamma \rceil}(x), \lceil \gamma \rceil, x) : x \in X_{\lceil \gamma^{-1} \rceil}  \} \\
        &= \{ (\operatorname{t}(\gamma), \lceil \gamma \rceil, \operatorname{s}(\gamma) ) : \gamma \in \Gamma \setminus \{ 1_x \}_{x \in X} \} \sqcup \{ (x, 1_x, x) \}_{x \in X}.
    \end{align*}
\end{remark}

\begin{proposition} \label{p: every groupoid is a partial action groupoid}
  The partial action groupoid $\Psi(\theta^\Gamma)$ is isomorphic to the groupoid $\Gamma$. Therefore, every groupoid is a partial action groupoid.
\end{proposition}
\begin{proof}
   We define the maps $\eta_\Gamma: \Psi(\theta^\Gamma) \to \Gamma$ such that
   \begin{equation}
        \eta_\Gamma(y, g, x) = \begin{cases}
        \gamma, &\text{ if } g = \lceil \gamma \rceil \text{ for some } \gamma \in \Gamma \setminus \{ 1_x \}_{x \in X}\\
        1_x, &\text{ if }g = 1_{G_{\Gamma}}\\
        
      \end{cases}
   \end{equation}
   and
   \begin{equation}
      \eta_\Gamma^{-1}: \Gamma \to \Psi(\theta^\Gamma)  \text{ such that } \eta^{-1}_\Gamma(\gamma)= (\operatorname{t}(\gamma), \lceil \gamma \rceil, \operatorname{s}(\gamma)). 
   \end{equation}
   Notice that $\eta_\Gamma$ is well-defined by Remark~\ref{r: morphism of Psi theta gamma}. Furthermore, it is clear that $\eta_\Gamma^{-1}$ is well-defined since $X^\Gamma_{\lceil \gamma \rceil} = \left\{ \operatorname{t}(\gamma) \right\}$. Finally, by direct computations one verify that $\eta_\Gamma$ and $\eta_\Gamma^{-1}$ are mutually inverses, and that $\eta_\Gamma$ is a morphism of groupoids.
\end{proof}

Let $f: \Gamma {\rightrightarrows} X_{\Gamma} \to \Gamma' {\rightrightarrows} X_{\Gamma'}$ be a morphism of groupoids. We can see $f$ as a pair $(f_0, f_1)$ such that $f_0: X_\Gamma \to X_{\Gamma'}$ is a function, and $f_1 : \Gamma \to \Gamma'$ is the respective function between morphism of the groupoids. Thus, $f_1$ induces a morphism of groups $\tilde{\varphi}: F_\Gamma \to G_{\Gamma'}$ (where $F_{\Gamma}$ is free group determined by $\Gamma$) such that $\tilde{\varphi}(\lfloor \gamma \rfloor) = \overline{\lfloor f_1(\gamma) \rfloor}$. Since $f_1$ sends identity elements to identity elements we conclude that if $\lfloor \gamma_0 \rfloor \lfloor \gamma_1 \rfloor \dots \lfloor \gamma_n \rfloor$ is a loop, then $\lfloor f_1(\gamma_0) \rfloor \lfloor f_1(\gamma_1) \rfloor \dots \lfloor f_1(\gamma_n) \rfloor$ is a loop, then $N_{\Gamma} \subseteq \ker \tilde{\varphi}$. Thus, there exists a unique homomorphism of groups $\varphi_{f}: G_{\Gamma} \to G_{\Gamma'}$ such that $\varphi_{f}(\overline{\lfloor \gamma \rfloor})=\overline{\lfloor f_1(\gamma) \rfloor}$. Define $\Phi(\Gamma):= \theta^{\Gamma}$ and $\Phi(\Gamma'):= \theta^{\Gamma'}$, then we obtain a morphism of partial actions $\Phi(f): \Phi(\Gamma) \to \Phi(\Gamma')$ such that
\begin{equation}
    \Phi(f) := (f_0, \varphi_{f}):  \theta^\Gamma \to \theta^{\Gamma'}.
\end{equation}

Hence, we can state the following proposition which corresponds to the first part of Theorem~\ref{t: groupoids are partial actions}:

\begin{proposition}
    $\Phi: \textbf{Grpd} \to \textbf{PA}$ such that $\Phi(\Gamma) = \theta^{\Gamma}$ and $\Phi(f)= (f_0, \varphi_{f})$ is a covariant functor such that $\Psi \circ \Phi \cong 1_{\textbf{Grpd}}$.
\end{proposition}
\begin{proof}
    The fact that $\Phi$ is a functor was done in the above discussion. For the last part it is enough to observe that the map $\eta$ defined in the proof of Theorem~\ref{p: every groupoid is a partial action groupoid} defines a natural isomorphism between $\Psi \circ \Phi$ and $1_{\textbf{Grpd}}$.
\end{proof}

\subsection{The universal property of the groupoid partial action}

Our objective now is show that the partial action $\theta^\Gamma=( G_\Gamma, X, \left\{ X^\Gamma_g \right\}, \left\{ \theta^\Gamma_g \right\})$ can be characterized (up to isomorphism) by certain universal property. 

\begin{lemma}
   Let $\alpha$ be a partial action of $S$ on $Y$ and $f: \Gamma \to \Psi(\alpha)$ an isomorphism of groupoids. Then, there exists $f_0: X \to Y$ bijection of set, and $f_1: \Gamma \to S$ morphisms of groupoids, such that $f(\gamma)=\big(f_0(\operatorname{t}(\gamma)), f_1(\gamma), f_0(\operatorname{s(\gamma)}) \big)$.
   \label{l: decomposition of isomorphism of partial actions groupoids}
\end{lemma}

\begin{proof}
   Observe that if $\gamma \in \Gamma$, then $f(\gamma) = (f_0^{\operatorname{t}}(\gamma), f_1(\gamma), f_0^{\operatorname{s}}(\gamma))$ for some maps $f_1, f_0^{\operatorname{s}}, f_0^{\operatorname{t}}$. Since, $f$ maps identities into identities and
   \[
      f(1_z) = (f_0^{\operatorname{t}}(1_z), f_1(1_z), f_0^{\operatorname{s}}(1_z)),
   \]
    we have that $f_0^{\operatorname{s}}(1_z) =f_0^{\operatorname{t}}(1_z)$ for all $z \in X$. Thus, we define $f_0(z) := f_0^{\operatorname{s}}(1_{z})$. Since $1_{\operatorname{t}(\gamma)} \circ \gamma = \gamma = \gamma \circ 1_{\operatorname{s(\gamma)}}$ and $f$ is a morphism is groupoids, we conclude that $f_0(\operatorname{s(\gamma)}) = f_0^{\operatorname{s}}(\gamma)$ and $f_0(\operatorname{t}(\gamma)) = f_0^{\operatorname{t}}(\gamma)$. Therefore, 
   \[
      f(\gamma)=(f_0(\operatorname{t}(\gamma)), f_1(\gamma), f_0(\operatorname{s(\gamma)})).
   \]
   Finally, $f_0$ is a bijection since $f$ is a bijection and $f_1$ is a morphism of groupoids since $f$ is a morphism of groupoids.
\end{proof}

Now we can prove the following proposition that corresponds to the second part or Theorem~\ref{t: groupoids are partial actions}:

\begin{proposition}
   Let $\theta = (S, Y, \{ Y_s \}, \{ \theta_h \})$ be a partial action such that $f: \Gamma \to \Psi(\theta)$ is an isomorphism of groupoids. Then, there exists a unique morphism of partial actions $\phi=(\phi_0, \phi_1) : \theta^\Gamma \to \theta$ such that $f \circ \eta_\Gamma = \Psi(\phi)$.
\[\begin{tikzcd}
	{\theta^\Gamma} & {\Psi(\theta^\Gamma)} \\
	\theta & \Gamma & {\Psi(\theta)}
	\arrow["{\exists ! \phi}"', from=1-1, to=2-1]
	\arrow["{\eta_\Gamma}"', from=1-2, to=2-2]
	\arrow["f", from=2-2, to=2-3]
	\arrow["{\Psi(\phi)}", from=1-2, to=2-3]
\end{tikzcd}\]
   \label{p: Universal property of G Gamma}
\end{proposition}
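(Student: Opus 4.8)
The plan is to construct $\phi=(\phi_0,\phi_1)$ explicitly from the data of $f$ and then discharge three obligations in turn: that $\phi$ is a morphism of partial actions, that $f\circ\eta_\Gamma=\Psi(\phi)$, and that $\phi$ is unique. First I would invoke Lemma~\ref{l: decomposition of isomorphism of partial actions groupoids} to write $f(\gamma)=\big(f_0(\operatorname{t}(\gamma)),f_1(\gamma),f_0(\operatorname{s}(\gamma))\big)$, with $f_0:X\to Y$ a bijection and $f_1:\Gamma\to S$ recording the group (middle) component, and set $\phi_0:=f_0$. To build $\phi_1$ I would first extend the assignment $\lfloor\gamma\rfloor\mapsto f_1(\gamma)$ to a homomorphism $\tilde\phi_1:F\to S$ by the universal property of the free group $F$, and then argue that it descends to $G_\Gamma=F/N_\Gamma$.

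The main obstacle is precisely the well-definedness of $\phi_1$, i.e.\ showing $N_\Gamma\subseteq\ker\tilde\phi_1$. Here I would use that $f$ is a morphism of groupoids, so $f_1$ sends composable products to products, inverses to inverses, and the identities of $\Gamma$ to $1_S$ (read off as the middle components of $f(1_x)$). Consequently, for a loop $w=\lfloor\gamma_1\rfloor^{i_1}\cdots\lfloor\gamma_n\rfloor^{i_n}$, whose image $\pi(w)=\gamma_1^{i_1}\cdots\gamma_n^{i_n}$ is a genuine composable product equal to an identity of $\Gamma$, one computes $\tilde\phi_1(w)=f_1(\gamma_1)^{i_1}\cdots f_1(\gamma_n)^{i_n}=f_1(\pi(w))=1_S$; conjugates $zwz^{-1}$ of loops then also lie in $\ker\tilde\phi_1$, and since the kernel is normal it contains every generator of $N_\Gamma$. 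This is the exact analogue, now with target the group $S$ rather than another $G_{\Gamma'}$, of the factorization used earlier to define $\varphi_f$. The induced map $\phi_1:G_\Gamma\to S$ then satisfies $\phi_1(\overline{\lfloor\gamma\rfloor})=f_1(\gamma)$.

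To verify that $\phi=(\phi_0,\phi_1)$ is a morphism of partial actions I would use Remark~\ref{r: non-empty actions} to reduce to the cases $g=1_{G_\Gamma}$ (trivial, since $X^\Gamma_1=X$ and $\phi_1(1)=1_S$) and $g=\overline{\lfloor\gamma\rfloor}$. For the latter $\psi(g)=\gamma$, so $X^\Gamma_{g^{-1}}=\{\operatorname{s}(\gamma)\}$, $X^\Gamma_g=\{\operatorname{t}(\gamma)\}$, and $\theta^\Gamma_g$ sends $\operatorname{s}(\gamma)\mapsto\operatorname{t}(\gamma)$. The fact that $f(\gamma)=\big(f_0(\operatorname{t}(\gamma)),f_1(\gamma),f_0(\operatorname{s}(\gamma))\big)$ is a morphism of $\Psi(\theta)$ is exactly the assertion that $f_0(\operatorname{t}(\gamma))\in Y_{f_1(\gamma)}$ and $\theta_{f_1(\gamma)}(f_0(\operatorname{s}(\gamma)))=f_0(\operatorname{t}(\gamma))$, which are the inclusion and equivariance conditions for $\phi$. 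The identity $f\circ\eta_\Gamma=\Psi(\phi)$ I would then check by evaluating both sides on a morphism $(y,g,x)$ of $\Psi(\theta^\Gamma)$: for $g=\overline{\lfloor\gamma\rfloor}$ one has $x=\operatorname{s}(\gamma)$ and $y=\operatorname{t}(\gamma)$, so both sides equal $f(\gamma)$, while for $g=1_{G_\Gamma}$ both sides equal the identity of $\Psi(\theta)$ at $f_0(y)$.

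Finally, for uniqueness I would note that any $\phi'=(\phi_0',\phi_1')$ satisfying $f\circ\eta_\Gamma=\Psi(\phi')$ must agree with $\phi$ on objects and on generators. Evaluating $\Psi(\phi')$ on the identity $(x,1,x)$ and comparing with $f(\eta_\Gamma(x,1,x))=f(1_x)$ forces $\phi_0'(x)=f_0(x)$ for all $x$, so $\phi_0'=\phi_0$; evaluating on $(\operatorname{t}(\gamma),\overline{\lfloor\gamma\rfloor},\operatorname{s}(\gamma))$ and comparing with $f(\gamma)$ forces $\phi_1'(\overline{\lfloor\gamma\rfloor})=f_1(\gamma)=\phi_1(\overline{\lfloor\gamma\rfloor})$. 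Since the classes $\overline{\lfloor\gamma\rfloor}$ generate $G_\Gamma$ and both $\phi_1'$ and $\phi_1$ are group homomorphisms, $\phi_1'=\phi_1$, and Lemma~\ref{l: equality of partial actions} yields $\phi'=\phi$.
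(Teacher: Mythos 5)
Your proposal is correct and follows essentially the same route as the paper: decompose $f$ via Lemma~\ref{l: decomposition of isomorphism of partial actions groupoids}, lift $\lfloor\gamma\rfloor\mapsto f_1(\gamma)$ to the free group, kill $N_\Gamma$ by checking that loops map to $1_S$, and derive uniqueness from the fact that the classes $\overline{\lfloor\gamma\rfloor}$ generate $G_\Gamma$. Your write-up is if anything slightly more explicit than the paper's on the verification that $\phi$ is a morphism of partial actions and on the uniqueness of $\phi_0$.
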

\begin{proof}
   If $f$ is isomorphism of groupoids, then by Lemma~\ref{l: decomposition of isomorphism of partial actions groupoids} we have that $f(\gamma) = (f_0(\operatorname{t}(\gamma)), f_1(\gamma), f_0(\operatorname{s}(\gamma)))$ for some bijection $f_0$ and morphism of groupoids $f_1$. Keeping in mind Remark~\ref{r: morphism of Psi theta gamma} observe that the composition $f \circ \eta_{\Gamma}$ is given by
   \begin{equation*}
       f \circ \eta_{\Gamma}(x, 1_{G_{\Gamma}}, x) = f(1_x) = (f_0(x), 1_{G_{\Gamma}}, f_0(x)),
   \end{equation*}
   and
   \begin{equation} \label{eq: composition proof P universal property}
       f \circ \eta_{\Gamma}(\operatorname{t}(\gamma), \lceil \gamma \rceil, \operatorname{s}(\gamma)) = f(\gamma) = (f_0(\operatorname{t}(\gamma)), f_1(\gamma), f_0(\operatorname{s}(\gamma))),
   \end{equation}
    Therefore, the morphism of partial actions if there exists must be $\phi = (f_0, \phi_1)$, such that $\phi_1(\lceil \gamma \rceil)= f_1(\gamma)$ for all $\gamma \in \Gamma$. Thus, we have to verify the exists of such a map $\phi_1$. Observe that the map $f_1: \Gamma \to S$ give rise to a morphism of groups $\tilde{\phi}_1: F \to S$, such that $\tilde{\phi}_1(\lfloor \gamma \rfloor) = f_1(\gamma).$ Let $\lfloor \gamma_0 \rfloor \lfloor \gamma_1 \rfloor \dots\lfloor \gamma_n \rfloor$ be a loop, therefore, $\gamma_0 \gamma_1 \dots \gamma_n = 1_x,$ for some $x \in X_\Gamma$. Thus,
   \[
       \tilde{\phi}_1(\lfloor \gamma_0 \rfloor \lfloor \gamma_1 \rfloor \dots\lfloor \gamma_n \rfloor) = f_1(\gamma_0)f_1(\gamma_1) \dots f_1(\gamma_n) = f_1(\gamma_0 \gamma_1 \dots \gamma_n)= f_1(1_x)=1_S.
   \]
   Then, $N_\Gamma \subseteq \ker \tilde{\phi_1}$. Hence, there exists a morphism of groups $\phi_1: G_\Gamma \to S$, such that $\phi_1(\lceil \gamma \rceil) = f_1(\gamma)$. We set $\phi = (f_0, \phi_1)$. Thus, Equation~\eqref{eq: composition proof P universal property} takes the form
   \[
       (y, \lceil \gamma \rceil,x) \longmapsto (f_0(y), \phi_1(\lceil \gamma \rceil), f_0(x)).
   \]
   Whence we conclude that $f_0(X_\omega) \subseteq Y_{\phi_1(\omega)}$ and $f_0(\theta^\Gamma_\omega(x)) = \theta_{\phi_1(\omega)}(f_0(x))$ for all $\omega \in G_{\Gamma}$, that is, $\phi$ is a morphism of partial actions.
\end{proof}

\begin{corollary}
In the conditions of Proposition~\ref{p: Universal property of G Gamma} we have that there exists a bijection $\phi_0: X \to Y$ and morphism of groups $\phi_1: G_\Gamma \to S$ such that the isomorphism $f \circ \eta_\Gamma: \Psi(\theta^{\Gamma}) \to \Psi(\alpha)$ is such that $(y, g, x) \mapsto (\phi_0(y), \phi_1(g), \phi_0(x))$.
   \label{c: Universal property of G Gamma}
\end{corollary}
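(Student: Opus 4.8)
The plan is to obtain the corollary as an immediate unpacking of Proposition~\ref{p: Universal property of G Gamma} together with the definition of the functor $\Psi$ on morphisms. First I would apply that proposition to the partial action $\theta = (S, Y, \{ Y_s \}, \{ \theta_s \})$ (written $\alpha$ in the statement of the corollary) and the isomorphism $f : \Gamma \to \Psi(\theta)$, obtaining the unique morphism of partial actions $\phi = (\phi_0, \phi_1) : \theta^\Gamma \to \theta$ with $f \circ \eta_\Gamma = \Psi(\phi)$. By the very definition of a morphism of partial actions, $\phi_0 : X \to Y$ is a map of sets and $\phi_1 : G_\Gamma \to S$ is a homomorphism of groups, which already supplies the two maps named in the statement.

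Next I would read off the explicit formula. By the defining equation \eqref{eq: induced partial action groupoid morphism} of $\Psi$ on a morphism, $\Psi(\phi)(y,g,x) = (\phi_0(y), \phi_1(g), \phi_0(x))$ for every $(y,g,x) \in \Psi(\theta^\Gamma)$. Substituting the identity $f \circ \eta_\Gamma = \Psi(\phi)$ then yields exactly the asserted description $(y,g,x) \mapsto (\phi_0(y), \phi_1(g), \phi_0(x))$ of the composite isomorphism, with no further computation required.

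The only assertion not literally contained in Proposition~\ref{p: Universal property of G Gamma} is that $\phi_0$ is a bijection, and this I expect to be the sole (and very mild) obstacle. I would dispatch it in one line: $\eta_\Gamma$ is an isomorphism of groupoids by Theorem~\ref{t: every groupoid is a partial action groupoid} and $f$ is an isomorphism by hypothesis, so $f \circ \eta_\Gamma = \Psi(\phi)$ is an isomorphism of groupoids; its object component is precisely $\phi_0$, and any isomorphism of groupoids is a bijection on objects, whence $\phi_0 : X \to Y$ is a bijection. Equivalently, one may simply invoke Lemma~\ref{l: decomposition of isomorphism of partial actions groupoids}, in which $\phi_0$ coincides with the bijection $f_0$. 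Since everything else is bookkeeping, the corollary follows.
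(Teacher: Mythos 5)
Your proposal is correct and matches the paper's intent: the corollary is an immediate consequence of Proposition~\ref{p: Universal property of G Gamma}, whose proof already produces $\phi=(f_0,\phi_1)$ with $f\circ\eta_\Gamma=\Psi(\phi)$ and identifies the object component with the bijection $f_0$ supplied by Lemma~\ref{l: decomposition of isomorphism of partial actions groupoids}. Your additional one-line justification that $\phi_0$ is a bijection (as the object part of the groupoid isomorphism $f\circ\eta_\Gamma$) is exactly the right bookkeeping and requires no further comment.
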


From Proposition~\ref{p: Universal property of G Gamma} and a classical universal property argument we conclude that $\theta^\Gamma$ is the unique (up to isomorphism) partial action that satisfies the universal property of partial action groupoids.

\subsection{Adjunction}

Now we will proof the last part of Theorem~\ref{t: groupoids are partial actions}.

\begin{lemma} \label{l: epsilon theta map}
    Let $\theta$ be a partial action. Then, there exists a morphism of partial actions $\varepsilon_\theta: \Phi \Psi (\theta) \to \theta$, such that $\Psi(\varepsilon_\theta) = \eta_{\Psi(\theta)}$.
\end{lemma}
\begin{proof}
    Consider the identity map $1 : \Psi(\theta) \to \Psi(\theta)$, the by $(ii)$ of Theorem~\ref{t: groupoids are partial actions} there exists a morphism of partial actions $\varepsilon_\theta: \Phi \Psi(\theta) \to \theta$ such that the diagram
    \[
        \begin{tikzcd}
            {\Psi(\theta)} & {\Psi(\theta)} \\
            {\Psi\Phi\Psi(\theta)}
            \arrow["1", from=1-1, to=1-2]
            \arrow["{\Psi(\varepsilon_\theta)}"', curve={height=12pt}, dashed, from=2-1, to=1-2]
            \arrow["{\eta_{\Psi(\theta)}}", from=2-1, to=1-1]
        \end{tikzcd}
    \]
\end{proof}

\begin{lemma} \label{l: Psi one to one condition}
    Let $\Gamma \rightrightarrows X$ be a groupoid and $\theta$ a partial action of $G$ on a set $Z$. If $f, h \in \hom(\Phi(\Gamma), \theta)$ are two maps such that $\Psi(f) = \Psi(h)$, then $f = h$.
\end{lemma}
\begin{proof}
By Remark~\ref{r: morphism of Psi theta gamma} we know that
\begin{align*}
    \Psi \Phi(\Gamma) 
    = \{ (\operatorname{t}(\gamma), \lceil \gamma \rceil, \operatorname{s}(\gamma) ) : \gamma \in \Gamma \setminus \{ 1_x \}_{x \in X} \} \sqcup \{ (x, 1_{G_{\Gamma}}, x) \}_{x \in X},
\end{align*}
since $\Psi(f)=\Psi(h)$, then
\[
    \Big(f_0(\operatorname{t}(\gamma)), \, f_1(\lceil \gamma \rceil), \, f_0(\operatorname{s}(\gamma)) \Big) = \Psi(f)=\Psi(h) = \Big(h_0(\operatorname{t}(\gamma)), \, h_1(\lceil \gamma \rceil), \, h_0(\operatorname{s}(\gamma)) \Big),
\]
for all $\gamma \in \Gamma \setminus \{ 1_x \}_{x \in X}$, and 
\[
    \Big(f_0(x), \, 1, \, f_0(x) \Big) = \Psi(f)=\Psi(h) = \Big(h_0(x), \, 1, \, h_0(x) \Big),
\]
for all $x \in X$. Whence we conclude that $f_0(x) = h_0(x)$ for all $x \in X$ and $f_1(\lceil \gamma \rceil) = h_1(\lceil \gamma \rceil)$ for all $\gamma \in \Gamma$. Since $\{ \lceil \gamma \rceil : \gamma \in \Gamma \}$ generates $G_\Gamma$ as a group, then $f_1 = h_1$. Hence, $f = (f_0, f_1) = (h_0, h_1) = h$.
\end{proof}

Let $\Gamma \rightrightarrows X$ be a groupoid and $\theta$ a partial group action of a group $G$ on the set $Z$. Define $\lambda: \hom(\Phi(\Gamma), \theta) \to \hom(\Gamma, \Psi(\theta))$ such that $\lambda(f):= \Psi(f) \circ \eta^{-1}_{\Gamma}$ for all $f \in \hom(\Phi(\Gamma), \theta)$,
\[
    \lambda(f):=\Gamma 
    \overset{\eta^{-1}_{\Gamma}}{\to} \Psi \Phi(\Gamma) 
    \overset{\Psi(f)}{\to} \Psi(\theta),
\]
and $\lambda': \hom(\Gamma, \Psi(\theta)) \to \hom(\Phi(\Gamma), \theta)$ such that $\lambda'(h)= \varepsilon_{\theta} \circ \Phi(h)$ for all $h \in \hom(\Gamma, \Psi(\theta))$
\[
    \lambda'(h):= \Phi(\Gamma) \overset{\Phi(h)}{\to} \Phi \Psi(\theta) \overset{\varepsilon_{\theta}}{\to} \theta,
\]
where $\varepsilon_{\theta}$ is the map obtained in Lemma~\ref{l: epsilon theta map}. Let $h \in \hom(\Gamma, \Psi(\theta))$, then
\begin{align*}
    \lambda \lambda'(h)
    &= \lambda(\varepsilon_{\theta} \circ \Phi(h)) \\
    &= \Psi(\varepsilon_{\theta} \circ \Phi(h)) \circ \eta_{\Gamma}^{-1} \\
    &= \Psi(\varepsilon_{\theta}) \circ \Psi \Phi(h) \circ \eta_{\Gamma}^{-1} \\
    (\text{ by Lemma~\ref{l: epsilon theta map}}) &= \eta_{\Psi(\theta)} \circ \Psi \Phi(h) \circ \eta_{\Gamma}^{-1} \\
    &= h,
\end{align*}
the last equality holds due to the fact that $\eta: \Psi \Phi \to 1_{\textbf{Grpd}}$ is a natural isomorphism. Thus, $\lambda$ is a surjective map. Let $f,h \in \hom(\Phi(\Gamma), \theta)$ such that $\lambda(f)=\lambda(h)$, therefore $\Psi(f) \circ \eta_{\Gamma} = \Psi(h) \circ \eta_{\Gamma}$, thus $\Psi(f) = \Psi(h)$. Hence, by Lemma~\ref{l: Psi one to one condition} we conclude that $\lambda$ is injective. Thus, for all groupoid $\Gamma$ and partial action $\theta$ we obtain a bijection $\lambda_{\Gamma, \theta}: \hom(\Phi(\Gamma), \theta) \to \hom(\Gamma, \Psi(\theta))$. 
To verify that $\lambda$ establishes an adjunction between the functors $\Phi$ and $\Psi$, we need to confirm that the following diagram commutes
\[\begin{tikzcd}
	{\hom(\Phi(\Gamma), \theta)} && {\hom(\Gamma, \Psi(\theta))} \\
	{\hom(\Phi(\Gamma'), \theta')} && {\hom(\Gamma', \Psi(\theta'))}
	\arrow["{\lambda_{\Gamma, \theta}}", from=1-1, to=1-3]
	\arrow["{\lambda_{\Gamma', \theta'}}", from=2-1, to=2-3]
	\arrow["{\hom(f, \Psi(h))}", from=1-3, to=2-3]
	\arrow["{\hom(\Phi(f), h)}"', from=1-1, to=2-1]
\end{tikzcd}\]
for all partial actions $\theta$ and $\theta'$, all groupoids $\Gamma$ and $\Gamma'$, and any pair of morphisms $f: \Gamma' \to \Gamma$ and $h: \theta \to \theta'$. Let $\sigma: \Phi(\Gamma) \to \theta$ morphism of partial actions,  thus
\begin{align*}
   \lambda_{\Gamma', \theta'} \circ \hom(\Phi(f), h)(\sigma)
   &= \lambda_{\Gamma', \theta'}(h \circ \sigma \circ \Phi(f)) \\
   &= \Psi(h \circ \sigma \circ \Phi(f)) \circ \eta_{\Gamma'}^{-1} \\
   &= \Psi(h) \circ \Psi(\sigma) \circ \Psi\Phi(f) \circ \eta_{\Gamma'}^{-1} \\
   (\flat) &= \Psi(h) \circ \Psi(\sigma) \circ  \eta_{\Gamma}^{-1} \circ f \\
   &= \hom(f, \Phi(h)) \big( \Psi(\sigma) \circ \eta_{\Gamma}^{-1} \big) \\
   &= \hom(f, \Phi(h)) \circ \lambda_{\Gamma, \theta} (\sigma),
\end{align*}
equality $(\flat)$ holds since $\eta: \Psi \Phi \to 1_{\textbf{Grpd}}$ is a natural transformation, and therefore $\Psi \Phi(f) = \eta_{\Gamma}^{-1} \circ f \circ \eta_{\Gamma'}$. Thus, we have proved the last part of Theorem~\ref{t: groupoids are partial actions}.

\section{Partial actions associated to the same groupoid}

\underline{From now on}, we will identify $\Gamma {\rightrightarrows} X$ with $\Psi \Phi (\Gamma)$. Our objective now is study the class of partial actions $\alpha$ such that $\Psi(\alpha) \cong \Gamma$.

\begin{remark}
   Let $\alpha = (S, Y, \{ Y_h \}, \{ \alpha_h \})$ be a partial action of $S$ on $Y$ such that $\Psi(\alpha) \cong \Gamma$. Let $f: \Gamma \to \Psi(\alpha)$ be an isomorphism of groupoids, then by Corollary~\ref{c: Universal property of G Gamma} we have that $f=(\phi_0, \phi_1)$. We can define a new partial action $\tilde{\alpha}:= (S, X, \{ X_h \}, \{ \tilde{\alpha}_h \})$ of $S$ on $X$ isomorphic to $\alpha$. Indeed, define $X_h := \phi_0^{-1}(Y_h)$ and $\tilde{\alpha}_h := \phi_0^{-1} \alpha_h \phi_0$. It is easy to see that $(\phi_0^{-1}, 1_S): \alpha \to \tilde{\alpha}$ is an isomorphism of partial actions. Then, the isomorphism of their respective partial action groupoid is given by
   \[
      (y, h, x) \longmapsto (\phi_0^{-1}(y), h, \phi_0^{-1}(x)).
   \]
   Thus, the composition of isomorphisms
   \[
      \Gamma \longrightarrow \Psi(\alpha) \longrightarrow \Psi(\tilde{\alpha})
   \]
   is such that 
   \begin{equation}
      (y,g,x) \longmapsto (\phi_0(y), \phi_1(g), \phi_0(x)) \longmapsto (y, \phi_1(g), x).
      \label{eq: first restriction of partial actions}
   \end{equation}
    Therefore, we can restrict ourselves to the class of partial actions on $X$ such that the isomorphisms of their respective groupoid partial actions are of the form \eqref{eq: first restriction of partial actions}, i.e., a partial action of such form is a partial action $\alpha$ of a group $S$ on $X$ equipped with a group homomorphism $\phi: G_{\Gamma} \to S$ such that $(y, g, x) \in \Gamma \mapsto (y, \phi(g), x) \in \Psi(\alpha)$ is an isomorphism of groupoids.
   \label{r: partial action restriction to X}
\end{remark}
 
We can impose further restrictions, consider a partial action $\alpha = (S, X, \{ X_h \}, \{ \alpha_h \})$ of $S$ on $X$ of the form \eqref{eq: first restriction of partial actions} with morphism of groups $\phi: G_{\Gamma} \to S$. We can then define a new partial action $\hat{\alpha}=(\im \phi,\{ X_h \}_{h \in \im \phi}, \{ \alpha_h \}_{h \in \im \phi})$ of $\im \phi$ on $X$. It is clear that $\hat{\alpha}$ is a partial action, and this partial action contains all the information about $\alpha$ but the original group, as we will prove in the following lemma:
\begin{lemma}
   Let $\alpha$ be a partial action of $S$ on $X$ such that $\Psi(\alpha) \cong \Gamma$ and the isomorphism has the form    
   \[
      (y, g, x) \mapsto (y, \phi(g), x),
   \]
   where $\phi: G_\Gamma \to S$ is the morphism of groups given in Corollary~\ref{c: Universal property of G Gamma}. If $\alpha_h \neq \emptyset$, then $h \in \im \phi$.
   \label{l: depuration of a partial action}
\end{lemma}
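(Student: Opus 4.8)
The plan is to read the statement off directly from the explicit description of the arrows of a partial action groupoid, combined with the fact that the given isomorphism is surjective and acts as the identity on the object coordinates. First I would recall that, by the construction of $\Psi$ in \eqref{eq: partial action groupoid composition}, the arrow set of $\Psi(\alpha)$ is
\[
    \Psi(\alpha) = \{ (\alpha_h(x), h, x) : h \in S,\ x \in X_{h^{-1}} \},
\]
so that a group element $h \in S$ occurs as the middle coordinate of some arrow of $\Psi(\alpha)$ precisely when $X_{h^{-1}} = \dom \alpha_h \neq \varnothing$. Since $\alpha_h$ is a bijection $X_{h^{-1}} \to X_h$, this is exactly the condition $\alpha_h \neq \emptyset$. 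Thus the set of group elements appearing as labels of arrows of $\Psi(\alpha)$ coincides with $\{ h \in S : \alpha_h \neq \emptyset \}$.

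Next, assuming $\alpha_h \neq \emptyset$, I would choose any $x \in X_{h^{-1}}$ and form the arrow $(\alpha_h(x), h, x) \in \Psi(\alpha)$. By hypothesis the isomorphism $\Gamma \to \Psi(\alpha)$ has the normal form
\[
    (y, g, x) \longmapsto (y, \phi(g), x),
\]
and in particular it is surjective on arrows. Hence there is an arrow $(y, g, x') \in \Gamma$ whose image is $(\alpha_h(x), h, x)$. Comparing the three coordinates and using that the isomorphism leaves the source and target objects untouched while replacing the label $g$ by $\phi(g)$, we are forced to have $h = \phi(g)$, and therefore $h \in \im \phi$.

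The argument is essentially a coordinate comparison, so I do not expect a substantial obstacle. The only point requiring care is the reduction carried out in Remark~\ref{r: partial action restriction to X}: one must genuinely be in the situation where, after passing to a partial action on $X$ itself, the isomorphism is given \emph{on the nose} by $(y, g, x) \mapsto (y, \phi(g), x)$ with $\phi: G_\Gamma \to S$ the homomorphism of Corollary~\ref{c: Universal property of G Gamma}. Granting this normal form, every middle coordinate of an arrow of $\Psi(\alpha)$ lies in $\im \phi$ by surjectivity, and the equivalence ``$h$ is a middle coordinate of some arrow $\Leftrightarrow \alpha_h \neq \emptyset$'' immediately yields the claim.
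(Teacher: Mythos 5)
Your proposal is correct and follows essentially the same route as the paper: pick $x \in X_{h^{-1}}$ so that $(\alpha_h(x), h, x) \in \Psi(\alpha)$, then use surjectivity of the isomorphism $(y,g,x) \mapsto (y,\phi(g),x)$ to produce $g \in G_\Gamma$ with $\phi(g) = h$. The preliminary discussion of which labels occur as middle coordinates is a harmless elaboration of the same coordinate-comparison argument the paper gives.
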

\begin{proof}
   If $\alpha_h \neq \emptyset$, then there exists $x \in X_{h^{-1}}$, consequently $(\alpha_h(x), h, x) \in \Psi(\alpha)$. Since, $(1_X, \phi)$ is an isomorphism of groupoids, then $(\alpha_h(x), \phi(g), x) = (\alpha_h(x), h, x)$, for some $g \in G_{\Gamma}$. Hence, $\phi(g)=h$. 
\end{proof}

\begin{definition} \label{d: reduced partial action}
   Let $\theta = (G, X, \left\{ X_g \right\}_{g \in G}, \left\{ \theta \right\}_{g \in G})$ be a partial action of $G$ on $X$. We define $\operatorname{red} G$ as the subgroup of $G$ generated by the set $\{ g \in G : \theta_{g} \neq \emptyset \}$. Then, we obtain a new partial action of $\operatorname{red} G$ on $X$ given as follows:
   \begin{equation}
       \operatorname{red}\theta := \big( \operatorname{red}G, X, \left\{ X_g \right\}_{g \in \operatorname{red}G}, \left\{ \theta \right\}_{g \in \operatorname{red}G} \big).
   \end{equation}
   We say that $\theta$ is a \textbf{reduced partial action} if $\theta = \operatorname{red}\theta$, or equivalently, if $\operatorname{red} G = G$. 
\end{definition}

\begin{definition}
    Let $\alpha = (S, X, \left\{ X_h \right\}_{h \in G}, \left\{ \alpha_h \right\}_{h \in S})$ be a partial action of $S$ on $X$. We say that $\alpha$ is a \textbf{reduced partial action with groupoid $\Gamma$} if there exists a surjective homomorphism of groups $\phi : G_{\Gamma} \to S$ such that  $(y, g, x) \in \Psi(\alpha) \longmapsto (y, \phi(g), x) \in \Gamma$ is an isomorphism of groupoids.
\end{definition}

\begin{remark}
    Note that if $\alpha$ is a reduced partial action with groupoid $\Gamma$, then it is also a reduced partial action in the sense of Definition~\ref{d: reduced partial action}. Indeed, the group $G_{\Gamma}$ is generated by the set $\{ \lceil \gamma \rceil : \gamma \in \Gamma \}$, and $X^{\Gamma}_{\lceil \gamma \rceil} \neq \varnothing$ for all $\gamma \in \Gamma$. Therefore, $X_{\phi(\lceil \gamma \rceil)} \neq \varnothing$ since $X_{\lceil \gamma \rceil}^{\Gamma} \subseteq X_{\phi(\lceil \gamma \rceil)}$. Finally, $S$ is generated by $\{ \phi(\lceil \gamma \rceil) : \gamma \in \Gamma\}$ since $\phi$ is surjective.
\end{remark}

Since our objective is to study the partial actions with a partial action groupoid isomorphic to $\Gamma$, Remark~\ref{r: partial action restriction to X} and Lemma~\ref{l: depuration of a partial action} tell us that it is enough to study the reduced partial actions with groupoid $\Gamma$.

\begin{proposition}
   Let $\alpha = (S, X, \left\{ D_h \right\}_{h \in S}, \left\{ \alpha_h \right\}_{h \in S})$ be a reduced partial action with groupoid $\Gamma$, with respective surjective morphism of groups $\phi: G_{\Gamma} \to S$. Then,
   \begin{equation}
      D_{h} = \bigcup_{g \in \phi^{-1}(h)} X^\Gamma_g;
      \label{eq: induced partial action i}
   \end{equation}
   and
   \begin{equation}
      \alpha_h(x) = \theta^\Gamma_g(x),
      \label{eq: induced partial action ii}
   \end{equation}
   where $g \in \phi^{-1}(h)$ is the only pre-image of $h$ such that $x \in X_{g^{-1}}^{\Gamma}$. Hence, the partial action $\alpha$ is determined by $\phi.$
	\label{p: groupoid partial actions are determined by phi}
\end{proposition}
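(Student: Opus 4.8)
The plan is to read off both equalities directly from the groupoid isomorphism, exploiting that it is the identity on objects. I would write $\Xi \colon \Gamma \to \Psi(\alpha)$ for the isomorphism of the form $(y,g,x) \mapsto (y,\phi(g),x)$ furnished by the hypothesis (equivalently by Corollary~\ref{c: Universal property of G Gamma}); since $\Xi$ acts as the identity on the shared object set $X$, it preserves sources and targets, and it carries a $g$-labelled morphism to a $\phi(g)$-labelled one. The two facts I would lean on throughout are that the morphisms of $\Gamma = \Psi(\theta^{\Gamma})$ are exactly the triples $g_x = (\theta^{\Gamma}_g(x), g, x)$ with $x \in X^{\Gamma}_{g^{-1}}$ and those of $\Psi(\alpha)$ the triples $(\alpha_h(x), h, x)$ with $x \in D_{h^{-1}}$; consequently $X^{\Gamma}_g$ is precisely the set of targets of $g$-labelled morphisms of $\Gamma$, and $D_h$ the set of targets of $h$-labelled morphisms of $\Psi(\alpha)$.

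For \eqref{eq: induced partial action i} I would translate the description of $D_h$ through $\Xi$: an element $y$ belongs to $D_h$ if and only if it is the target of some $h$-labelled morphism of $\Psi(\alpha)$, which, pulling back along $\Xi$, happens if and only if $y$ is the target of some $g$-labelled morphism of $\Gamma$ with $\phi(g)=h$, i.e.\ if and only if $y \in X^{\Gamma}_g$ for some $g \in \phi^{-1}(h)$. That is exactly the claimed union.

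For \eqref{eq: induced partial action ii} I would first note that $x \in D_{h^{-1}}$ gives, via \eqref{eq: induced partial action i} applied to $h^{-1}$ together with the identity $\phi^{-1}(h^{-1}) = \{\, g^{-1} : g \in \phi^{-1}(h)\,\}$, some $g \in \phi^{-1}(h)$ with $x \in X^{\Gamma}_{g^{-1}}$; this is the required pre-image. For any such $g$ the morphism $g_x = (\theta^{\Gamma}_g(x), g, x)$ of $\Gamma$ satisfies $\Xi(g_x) = (\theta^{\Gamma}_g(x), h, x)$, an $h$-labelled morphism of $\Psi(\alpha)$ with source $x$; since $\alpha_h$ is a map there is only one such morphism, namely $(\alpha_h(x), h, x)$, so $\theta^{\Gamma}_g(x) = \alpha_h(x)$. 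Uniqueness of $g$ would then follow by the same rigidity: if $g, g' \in \phi^{-1}(h)$ both placed $x$ in the respective source sets, then $g_x$ and $g'_x$ share the image $(\alpha_h(x), h, x)$ under the injective $\Xi$, forcing $g_x = g'_x$ and hence $g = g'$ upon comparing middle coordinates.

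The only genuinely structural point, and where I expect the main obstacle to lie, is this uniqueness: it rests on the fact that a partial action groupoid contains at most one morphism with a prescribed source and a prescribed group label, which is what makes the middle coordinate under $\Xi$ rigid. Granting \eqref{eq: induced partial action i} and \eqref{eq: induced partial action ii}, the final clause is immediate, since $\{D_h\}$ and $\{\alpha_h\}$ are then expressed purely in terms of $\phi$ and the data $\theta^{\Gamma}$ attached to $\Gamma$; in particular any two reduced partial actions with groupoid $\Gamma$ sharing the same $\phi$ coincide.
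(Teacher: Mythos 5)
Your proposal is correct and follows the same route as the paper, which simply asserts both equalities as a direct consequence of the fact that $(y,g,x) \mapsto (y,\phi(g),x)$ is an isomorphism of groupoids; you have merely spelled out the bookkeeping (targets of labelled morphisms give \eqref{eq: induced partial action i}, injectivity on morphisms with fixed source and label gives \eqref{eq: induced partial action ii} and the uniqueness of $g$) that the paper leaves implicit.
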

\begin{proof}
   This is a direct consequence of the fact that the map
   \[
      (y, g, x) \overset{(1_X, \phi)}{\longmapsto} (y, \phi(g), x)
   \]
   is an isomorphism of groupoids.
\end{proof}

Proposition~\ref{p: groupoid partial actions are determined by phi} tell us that to obtain reduced partial actions with groupoid $\Gamma$ we only have to look for surjective morphisms of groups such that \eqref{eq: induced partial action i} and \eqref{eq: induced partial action ii} determines a partial action. Henceforth, we are interested in the following set 
\begin{equation}
  \left\{ N \trianglelefteq G_\Gamma : \text{ the quotient homomorphism } G_\Gamma \to G_\Gamma/ N \text{ determines a partial action} \right\}.
\end{equation}

\begin{proposition}
   Let $\phi: G_\Gamma \to S$ be a surjective morphism of groups. For all $h \in S$, we define $\Gamma_h := \{ \gamma \in \Gamma : \phi(\lceil \gamma \rceil) = h\}$, and set
   \begin{enumerate}[(i)]
       \item $D_{h} := \{ \operatorname{t}(\gamma) : \gamma \in \Gamma_{h} \} = \{ \operatorname{s}(\rho) : \rho \in \Gamma_{h^{-1}} \}$,
       \item $\alpha_h(\operatorname{s}(\rho)) := \operatorname{t}(\rho) = \theta^{\Gamma}_{\lceil \rho \rceil}(\operatorname{s}(\rho))$, for all $\operatorname{s}(\rho) \in D_{h^{-1}}$.
   \end{enumerate}
   Then, $\alpha = (S, X, \left\{ D_h \right\}, \left\{ \alpha_h \right\})$ is a well-defined reduced partial action with groupoid $\Gamma$ if, and only if, for all $z \in \ker \phi$, $z \neq 1_{G_\Gamma}$, we have that $z \neq \lceil \gamma \rceil$ for all $\gamma \in \Gamma$.
   Note that $D_{h}$ may be empty and consequently $\alpha_{h}$ be the empty function.
   Furthermore, if $\alpha$ is well-defined then $\Psi(1_X, \phi): \Psi(\theta^{\Gamma}) \to \Psi(\alpha)$ is an isomorphism of groupoids.

   \label{p: characterization of ideal normal subgroups}
\end{proposition}
\begin{proof}
    Suppose that $\alpha$ is a well-defined partial action. Let $g \in G_{\Gamma}$, $g \neq 1_{G_\Gamma}$, such that $g = \lceil \gamma \rceil$ for some $\gamma \in \Gamma$. Since $g \neq 1_{G_{\Gamma}}$, then $\gamma$ is not an identity. Suppose that $\phi(g) = 1_{S}$; therefore, $\alpha_1(\operatorname{s}(\gamma)) = \operatorname{t}(\gamma) \neq \operatorname{s}(\gamma)$, which contradicts the fact that $\alpha$ is a well-defined partial action.     

    Conversely, suppose that for all $z \in \ker \phi \setminus \{ 1_{G_\Gamma}\}$, we have $z \neq \lceil \gamma \rceil$ for all $\gamma \in \Gamma$. First, we have to verify that $\alpha_h$ is well-defined, i.e., we have to prove that for all $x \in D_{h^{-1}}$, there exists a unique $\gamma \in \phi^{-1}(h)$ such that $\operatorname{s}(\gamma) = x$.
    If $\lceil \gamma \rceil, \lceil \rho \rceil \in \phi^{-1}(h)$, then $\lceil \gamma \rceil \lceil \rho^{-1} \rceil \in \ker \phi$. Hence, by hypothesis $\operatorname{s}(\gamma) \neq \operatorname{t}(\rho^{-1})$.
    Therefore, $\alpha_h$ is well-defined for all $h \in S$. Next, we have to verify that $\alpha$ is a partial action. For any pair $h, h' \in S$ and $x \in \dom \alpha_h \alpha_s$ we have
   \[
      \alpha_h \alpha_{h'}(x) = \theta^\Gamma_g \theta^\Gamma_{g'}(x) = \theta^\Gamma_{gg'}(x) = \alpha_{hh'}(x),
   \]
   where $g' \in \phi^{-1}(h')$, $x \in X_{(g')^{-1}}^\Gamma$, $g \in \phi^{-1}(h)$ and $\theta_{g'}(x) \in X_{g^{-1}}$.

    Finally, we have to show that
   \[
      (y, g, x) \in \Gamma \overset{(1_X, \phi)}{\longmapsto} (y, \phi(g), x) \in \Psi(\alpha)
   \]
    is an isomorphism of groupoids. By the definition of $\alpha$ it is clear that $(1_X, \phi)$ is a surjective morphism of groupoids. Moreover, we already know that for any $(y, h, x) \in \Psi(\alpha)$, there exists a unique $\lceil \gamma \rceil \in G_{\Gamma}$ such that $\lceil \gamma \rceil \in \phi^{-1}(h)$ and $\operatorname{s}(\gamma)= x$, this proves that the map $(1_{X}, \phi)$ is injective.
\end{proof}

By Proposition~\ref{p: groupoid partial actions are determined by phi} and Proposition~\ref{p: characterization of ideal normal subgroups} we have that the set 
\begin{equation}
   \mathcal{W}_\Gamma := \{ N \trianglelefteq  G_\Gamma : g \neq \lceil \gamma \rceil, \text{ for all } g \in N \setminus \{ 1_{G_\Gamma}\} \text{ and } \gamma \in \Gamma\}
   \label{eq: admisible normal subgroups}
\end{equation}
determines all the reduced partial actions with groupoid $\Gamma$.

Note that the partial action determined by $N \in \mathcal{W}_{\Gamma}$ defined in Proposition~\ref{p: characterization of ideal normal subgroups} is just the quotient partial action of $\theta^{\Gamma}$ by $(\mathcal{D}, N)$ where $\mathcal{D}$ is the diagonal relation on $X$, i.e., $x \sim_{\mathcal{D}} y \Leftrightarrow x =y$.

\begin{remark}
    Observe that morphism of reduced partial actions with groupoid $\Gamma$ are determined exclusively by a morphism between the respective groups. Furthermore, if such morphism of groups is an isomorphism, then by Lemma~\ref{l: isomorphism of partial actions via groupoids} we have that both reduced partial actions are isomorphic.
\end{remark}

Using the characterization of reduced partial actions with groupoid $\Gamma$ given by \eqref{eq: admisible normal subgroups} we obtain the next theorem:
\begin{theorem}
    Let $\theta$ be a partial action of $G$ on $Y$. Then, the associated groupoid of $\theta$ is $\Gamma {\rightrightarrows} X$ if, and only if, there exists $N \in \mathcal{W}_{\Gamma}$ such that $\operatorname{red} \theta$ is isomorphic to the quotient partial action $\overline{\theta^{\Gamma}}$ of $\theta^{\Gamma}$ by $(\mathcal{D}, N)$, where $\mathcal{D}$ is the diagonal relation on $X$.
\end{theorem}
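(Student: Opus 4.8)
The plan is to prove both implications by reducing everything to the already-established theory of reduced partial actions with groupoid $\Gamma$. The key preliminary observation I would record first is that $\Psi(\theta) = \Psi(\operatorname{red}\theta)$: passing from $\theta$ to $\operatorname{red}\theta$ only discards group elements $g$ with $\theta_g = \emptyset$, and since $\Gamma_\theta$ consists precisely of the triples $(\theta_g(x), g, x)$ with $X_{g^{-1}} \neq \varnothing$, none of the discarded elements contributes a morphism. Hence $\theta$ and $\operatorname{red}\theta$ have literally the same partial action groupoid, and the whole statement may be phrased in terms of $\operatorname{red}\theta$.

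The backward implication is then immediate. If $\operatorname{red}\theta \cong \overline{\theta^\Gamma}$ for some $N \in \mathcal{W}_\Gamma$, then $\Psi(\theta) = \Psi(\operatorname{red}\theta) \cong \Psi(\overline{\theta^\Gamma})$, and Proposition~\ref{p: characterization of ideal normal subgroups} applied to the quotient map $G_\Gamma \to G_\Gamma/N$ (whose kernel $N$ satisfies $\psi(z) = \emptyset$ for all $z \in N \setminus \{1_{G_\Gamma}\}$ by the very definition of $\mathcal{W}_\Gamma$) shows that $\overline{\theta^\Gamma}$ is a reduced partial action with groupoid $\Gamma$; that is, $\Psi(\overline{\theta^\Gamma}) \cong \Gamma$. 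Combining the two displays yields $\Psi(\theta) \cong \Gamma$.

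For the forward implication, I would assume $\Psi(\theta) \cong \Gamma$, so that $\Psi(\operatorname{red}\theta) \cong \Gamma$ as well, and upgrade $\operatorname{red}\theta$ to a reduced partial action with groupoid $\Gamma$ in the strong sense. Fixing an isomorphism $f : \Gamma \to \Psi(\operatorname{red}\theta)$, Corollary~\ref{c: Universal property of G Gamma} produces a group homomorphism $\phi_1 : G_\Gamma \to \operatorname{red}G$ and a bijection $\phi_0$ realizing $f \circ \eta_\Gamma$ as $(y,g,x) \mapsto (\phi_0(y), \phi_1(g), \phi_0(x))$. Using Remark~\ref{r: partial action restriction to X} I would transport $\operatorname{red}\theta$ along $\phi_0$ (an isomorphism which is the identity on the group $\operatorname{red}G$) to an isomorphic partial action $\tilde\theta$ on $X$ whose groupoid isomorphism with $\Gamma$ has the special form $(y,g,x) \mapsto (y, \phi_1(g), x)$. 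The crucial step is surjectivity of $\phi_1$: transporting preserves reducedness, so $\operatorname{red}G$ is generated by $\{h : \tilde\theta_h \neq \emptyset\}$, and Lemma~\ref{l: depuration of a partial action} forces each such $h$ into the subgroup $\im \phi_1$; hence $\operatorname{red}G = \im \phi_1$.

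Finally, I would set $N := \ker \phi_1 \trianglelefteq G_\Gamma$, so that $\phi_1$ induces an isomorphism $G_\Gamma/N \cong \operatorname{red}G$. Since $\tilde\theta$ is a genuine, well-defined reduced partial action with groupoid $\Gamma$ realized through the surjection $\phi_1$, Proposition~\ref{p: characterization of ideal normal subgroups} gives $\psi(z) = \emptyset$ for every $z \in N \setminus \{1_{G_\Gamma}\}$, i.e. $N \in \mathcal{W}_\Gamma$. By Proposition~\ref{p: groupoid partial actions are determined by phi} the partial action $\tilde\theta$ is completely determined by $\phi_1$ through the union and restriction formulas, and the remark that morphisms of reduced partial actions with groupoid $\Gamma$ coincide with morphisms of the underlying groups (with isomorphisms inducing isomorphisms) identifies $\tilde\theta$, via the induced isomorphism $G_\Gamma/N \cong \operatorname{red}G$, with the reduced partial action attached to the quotient $G_\Gamma \to G_\Gamma/N$; this last one is exactly the quotient partial action $\overline{\theta^\Gamma}$ of $\theta^\Gamma$ by $(\mathcal{D}, N)$. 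Therefore $\operatorname{red}\theta \cong \tilde\theta \cong \overline{\theta^\Gamma}$, as required. I expect the surjectivity of $\phi_1$, together with the bookkeeping that matches $\tilde\theta$ with the canonical quotient by $(\mathcal{D}, N)$, to be the only genuinely delicate points, everything else being a direct appeal to results already proved.
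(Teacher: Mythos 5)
Your proposal is correct and follows essentially the same route as the paper: the paper's proof consists precisely of the observation $\Psi(\theta)=\Psi(\operatorname{red}\theta)$ followed by an appeal to Remark~\ref{r: partial action restriction to X}, Proposition~\ref{p: groupoid partial actions are determined by phi} and Proposition~\ref{p: characterization of ideal normal subgroups}, which is exactly the chain of reductions you carry out. Your write-up merely makes explicit the details (transport along $\phi_0$, surjectivity of $\phi_1$ via Lemma~\ref{l: depuration of a partial action}, and the identification of the resulting action with the quotient by $(\mathcal{D},N)$) that the paper leaves implicit.
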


\begin{proof}
    First note that $\Psi(\theta) = \Psi(\operatorname{red}(\theta))$. Thus, we can assume that $\theta$ is a reduced partial action. Finally, Remark~\ref{r: partial action restriction to X}, Proposition~\ref{p: groupoid partial actions are determined by phi} and Proposition~\ref{p: characterization of ideal normal subgroups} give us the desired conclusion.
\end{proof}

Observe that $(\mathcal{W}_\Gamma, \subseteq)$ is a partial order set with the usual order of sets. If $\{ N_i \}_{i \in I}$ is a chain in $(\mathcal{W}_\Gamma, \subseteq)$, then $N = \cup_{i \in I} N_i \in \mathcal{W}_\Gamma$. Indeed, we know that $N$ is a normal subgroup since is the union of a chain of normal subgroups. Let $ g \in N$, $g \neq 1_{G_{\Gamma}}$, then $g \in N_i$ for some $i \in I$, thus $g \neq \lceil \gamma \rceil$ for all $\gamma \in \Gamma$. Henceforth, by the Zorn's Lemma, we conclude that $\mathcal{W}_\Gamma$ has maximal elements.

The order of $\mathcal{W}_{\Gamma}$ determines an order in the set of reduced partial actions with groupoid $\Gamma$ as follows: if $\alpha$ is a partial action of $G_{\Gamma}/N$ on $X$ and $\beta$ a partial action of $G_{\Gamma}/K$ on $X$, where $N, K \in \mathcal{W}_{\Gamma}$ then we have that
\[
   N \subseteq K \Longleftrightarrow \text{ the map } G_{\Gamma}/N \to G_{\Gamma}/K \text{ is a well-defined surjective map.} 
\]
In other words, $\alpha \leq \beta$ if, and only if, there exists a surjective morphism of groups $\varphi$ such that $(1_{X}, \varphi) : \alpha \to \beta$ is a morphism of partial actions.

\begin{definition}
    Let $\alpha = (S, X, \{ D_h \}, \{ \alpha_h \})$ be a reduced partial action with groupoid $\Gamma$. We say that $\alpha$ is a maximal reduced partial action if the kernel of the respective map $\phi: G_{\Gamma} \to S$ is a maximal element of $\mathcal{W}_{\Gamma}$. 
    In particular for any other reduced partial action $\beta = (S', X, \{ D'_h \}, \{ \beta_t \})$, with respective morphism $\phi': G_{\Gamma} \to S'$, and a morphism of partial actions $(1_X, \varphi): \alpha \to \beta$ such that the following diagram commutes 
   \[\begin{tikzcd}
      {G_\Gamma} \\
      S & {S',}
      \arrow["\phi"', from=1-1, to=2-1]
      \arrow["{\phi'}", from=1-1, to=2-2]
      \arrow["\varphi"', from=2-1, to=2-2]
   \end{tikzcd}\]
   we have that $\varphi$ is a group isomorphism, and consequently $(1_X, \varphi)$ is an isomorphism of partial actions.
   \label{d: maximal reduced partial action}
\end{definition}

Note that by Proposition~\ref{p: groupoid partial actions are determined by phi} we know that reduced partial actions are determined by their respective induced group morphism. Therefore, the commutativity of the diagram in Definition~\ref{d: maximal reduced partial action} is equivalent to
   \[\begin{tikzcd}
      {\theta_\Gamma} \\
      \alpha & \beta.
      \arrow[from=1-1, to=2-1]
      \arrow[from=1-1, to=2-2]
      \arrow["{(1_X,\varphi)}"', from=2-1, to=2-2]
   \end{tikzcd}\]
This, means that if a morphism of reduced partial actions factorizes through a maximal partial action, then both partial actions are isomorphic. Heuristically, a maximal partial action with groupoid $\Gamma$ is a partial action associated groupoid $\Gamma$ with the largest possible domains, since when we make the quotient partial action of $\theta^{\Gamma}$ by a congruence determined by an element of $\mathcal{W}_{\Gamma}$, we are expanding the domains by \textit{reducing} the group that acts on $X$. By the above discussion we have the following proposition:

\begin{proposition}
    For any groupoid $\Gamma$ there exists maximal reduced partial actions with groupoid $\Gamma$. In particular, global actions with groupoid $\Gamma$ are maximal.
\end{proposition}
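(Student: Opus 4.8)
The plan is to treat the two assertions separately, deriving both from the established correspondence between $\mathcal{W}_\Gamma$ and the reduced partial actions with groupoid $\Gamma$. For the existence statement I would invoke the Zorn's Lemma argument given just above: the poset $(\mathcal{W}_\Gamma, \subseteq)$ possesses a maximal element $N$. Feeding the quotient homomorphism $\phi : G_\Gamma \to G_\Gamma / N$ into Proposition~\ref{p: characterization of ideal normal subgroups} produces a well-defined reduced partial action $\alpha$ with groupoid $\Gamma$ whose respective morphism of groups has kernel exactly $N$; since $N$ is maximal in $\mathcal{W}_\Gamma$, the partial action $\alpha$ is maximal in the sense of Definition~\ref{d: maximal reduced partial action}.

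For the second assertion I would argue by contradiction. Let $\alpha = (S, X, \{D_h\}, \{\alpha_h\})$ be a global action with groupoid $\Gamma$ and respective morphism $\phi : G_\Gamma \to S$, and set $N := \ker \phi \in \mathcal{W}_\Gamma$. Assuming $N$ is not maximal, there is $N' \in \mathcal{W}_\Gamma$ with $N \subsetneq N'$; fix $z \in N' \setminus N$. Because $z \notin \ker \phi$, the element $h := \phi(z)$ satisfies $h \neq 1_S$. Globality forces $D_h = X \neq \varnothing$, so using $D_h = \bigcup_{g \in \phi^{-1}(h)} X^\Gamma_g$ we may select $g \in \phi^{-1}(h)$ with $X^\Gamma_g \neq \varnothing$, equivalently $\psi(g) \neq \emptyset$.

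The decisive step is then a short cancellation in $G_\Gamma$: from $\phi(g) = h = \phi(z)$ we get $g z^{-1} \in N \subseteq N'$, hence $g = (g z^{-1}) z \in N'$, while $\phi(g) = h \neq 1_S$ gives $g \neq 1_{G_\Gamma}$. Since $N' \in \mathcal{W}_\Gamma$, membership of the nontrivial element $g$ in $N'$ forces $\psi(g) = \emptyset$, contradicting the previous line. Therefore $N$ must be maximal, so $\alpha$ is a maximal reduced partial action.

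The main obstacle I anticipate is not this contradiction but the preliminary bookkeeping required to guarantee that a global action genuinely lies in the class of reduced partial actions with groupoid $\Gamma$, so that it is indeed described by a surjective $\phi$ and a subgroup $N \in \mathcal{W}_\Gamma$; this rests on Remark~\ref{r: partial action restriction to X} and Lemma~\ref{l: depuration of a partial action}, together with the elementary observation that over a nonempty $X$ every global action is automatically reduced, since each $\alpha_h$ is then a nonempty map. The degenerate case $X = \varnothing$ should be handled separately, as there $G_\Gamma$ is trivial and the unique reduced partial action is vacuously both global and maximal.
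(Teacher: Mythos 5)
Your proof is correct. For the existence claim you follow exactly the paper's route: Zorn's Lemma applied to $(\mathcal{W}_\Gamma, \subseteq)$ (using that the union of a chain of elements of $\mathcal{W}_\Gamma$ is again in $\mathcal{W}_\Gamma$) produces a maximal $N$, and Proposition~\ref{p: characterization of ideal normal subgroups} converts it into a maximal reduced partial action. For the claim that global actions are maximal, the paper offers only the heuristic that quotienting by a larger element of $\mathcal{W}_\Gamma$ enlarges the domains, so a global action, whose domains already equal $X$, cannot be properly dominated; you instead supply an explicit contradiction: if $\ker\phi \subsetneq N'$ with $N' \in \mathcal{W}_\Gamma$, pick $z \in N' \setminus \ker\phi$, use $D_{\phi(z)} = X \neq \varnothing$ together with $D_h = \bigcup_{g \in \phi^{-1}(h)} X^\Gamma_g$ to find $g \in \phi^{-1}(\phi(z))$ with $\psi(g) \neq \emptyset$, and note that $g \in N' \setminus \{1_{G_\Gamma}\}$ then violates the defining condition \eqref{eq: admisible normal subgroups} of $\mathcal{W}_\Gamma$. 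This makes rigorous what the paper leaves as a one-line appeal to ``the above discussion,'' and your attention to the degenerate case $X = \varnothing$ and to the bookkeeping needed to place a global action inside the class of reduced partial actions with groupoid $\Gamma$ (via Remark~\ref{r: partial action restriction to X} and Lemma~\ref{l: depuration of a partial action}) is appropriate rather than superfluous.
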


One can construct examples of maximal partial actions associated with a connected groupoid $\Gamma$ in which such partial action is not global actions.

\begin{example} \label{e: disk without zero}
    Let $X := \{ z \in \mathbb{C} : 0 < \| z \| \leq 1 \}$. Let $\Gamma$ be the tree groupoid with objects $X$, i.e., $\hom(x,y)$ consists only of one isomorphism. We define the partial action $\theta := \big( S^{1} \times \mathbb{R}, X, \{ D_{(z, r)} \}, \{ \theta_{(z, r)} \} \big)$  such that 
    \begin{enumerate}[(i)]
        \item $D_{(z,r)}:= X$ for all $z \in S^1$ and $r \leq 0$,
        \item $D_{(z,r)}:= \{ z \in X : \| z \| \leq 2^{-r} \}$ for all $z \in S^1$ and $r > 0$,
        \item $\theta_{(z,r)}(x) := 2^{r}zx$,
    \end{enumerate}
    where $S^1 \subseteq \mathbb{C}$ is the unit circle, and $\mathbb{R}$ is the group with the additive structure. Notice that $\theta_{(z,x)} \neq \emptyset$ for all $(z,r) \in S^1 \times \mathbb{R}$, this shows that $\theta$ is a reduced partial action with groupoid $\Gamma$. Furthermore, the respective domains of each $(z,r)$ cannot be enlarged, then $\theta_{(z,r)}$ is not a restriction of other partial bijection, thus $\theta$ is maximal.
\end{example}


\section*{Acknowledgments}

The author was supported by Funda\c c\~ao de Amparo \`a Pesquisa do Estado de S\~ao Paulo (Fapesp), process n°: 2022/12963-7.


\bibliographystyle{abbrv}
\bibliography{azu}

\end{document}